\newtheorem{theorem}{Theorem}
\newtheorem{corollary}{Corollary}
\newtheorem{proposition}{Proposition}
\newtheorem{lemma}{Lemma}
\newtheorem{remark}{Remark}
\newcounter{assumptionCounter}
\newcounter{propertyCounter}
\newcommand{\ule}[1]{\mathrel{\underset{\clap{\text{\tiny #1}}}{\le}}}
\title[Carleman Linearization of Parabolic PDEs]{Carleman Linearization of Parabolic PDEs: Well-posedness, convergence, and efficient numerical methods}
\author{Bernhard Heinzelreiter and John W. Pearson}
\date{}
\begin{document}

\begin{abstract}
We explore how the analysis of the Carleman linearization can be extended to dynamical systems on infinite-dimensional Hilbert spaces with quadratic nonlinearities. We demonstrate the well-posedness and convergence of the truncated Carleman linearization under suitable assumptions on the dynamical system, which encompass common parabolic semi-linear partial differential equations such as the Navier--Stokes equations and nonlinear diffusion--advection--reaction equations. Upon discretization, we show that the total approximation error of the linearization decomposes into two independent components: the discretization error and the linearization error. This decomposition yields a convergence radius and convergence rate for the discretized linearization that are independent of the discretization. We thus justify the application of the linearization to parabolic PDE problems. Furthermore, it motivates the use of non-standard structure-exploiting numerical methods, such as sparse grids, taming the curse of dimensionality associated with the Carleman linearization. Finally, we verify the results with numerical experiments.
\end{abstract}

\maketitle

\section{Introduction}
% Linearization, Carleman, Koopman
% Carleman has been introduced by Carleman in ...
% Introduce quadratic (general) Cauchy problem (not too many details -> will discuss later)
% Basic idea (transform to bilinear system)
% Has been used in control theory, quantum computation applied to partial differential equations etc., cite papers in both applications
% Important to understand convergence
% Work has been done on error bounds for the finite-dimensional bounds (cite two papers), this case is well-understood
% These bounds depend on the logarithmic growth of A, the norm of B and the norm of the initial value, and can ultimately determine convergence or divergence
% When turning to finite element methods, one is forced to approximate the state and the operators by A_h, B_h, y_{0, h}. This means that the error bounds depend on a discretization parameter h. In equations such as the Burgers equation, B_h grows in h, leading to a smaller convergence radius and a slower convergence over all (theoretically)
% In (insert quantum citation) the case of the diffusion-reaction equation has been covered by analyzing the bounds and their dependence on h, this is not covered. This was observed for example in (Carleman applied to fluid paper)
% We first analyze the continuous case
% 
% As opposed to discretizing the system, and the applying the Carleman linearization, our approach delivers convergence radius independent of the discretization and to use alternative discretization techniques for the continuous truncated Carleman linearization
The linearization of nonlinear dynamical systems presents a significant challenge in applied mathematics. A key method for addressing this problem is the Carleman linearization, first introduced by Torsten Carleman in his seminal work \cite{Carleman1932ApplicationLineaires}. A variant of this method, known as truncated Carleman linearization or finite-section approximation of the Carleman linearization, has garnered considerable attention. This approach effectively transforms nonlinear systems into approximate linear systems while retaining some of the original nonlinear behavior, thus making analysis and computation more tractable.

The relevance of the Carleman linearization is underscored by its diverse applications across multiple domains. In applied mathematics, it has been employed in optimal control to simplify the handling of nonlinear constraints, making it easier to derive control laws \cite{Rauh2009CarlemanProcesses,Amini2019CarlemanSystems,Amini2020ApproximateEquation,Kar2023ReinforcementLinearization}. In the field of model order reduction, the linearization has been utilized to extend established order-reducing methods to complex nonlinear systems, thus enabling more efficient simulations and analyses \cite{Goyal2015ModelBilinearization,Benner2012Interpolation-basedSystems}. It has also found application in system identification problems; see, e.g., \cite{Abudia2023CarlemanBounds}. More recently, the linearization has proven valuable for quantum computing applications as it facilitates the development of quantum algorithms to solve large-scale nonlinear dynamical systems \cite{Surana2024AnSystems,Wu2025QuantumConditions}.

Since its generalization to nonlinear partial differential equations (PDEs) in \cite{Banks1992Infinite-dimensionalEquations}, the Carleman linearization has been successfully applied in practice to approximate solutions to nonlinear PDEs. For instance, the method has been employed in a range of fluid flow applications, from simplified benchmark problems such as the Burgers' equation (see, e.g., \cite{Benner2012Interpolation-basedSystems}) to more complex fluid flow problems (see, e.g., \cite{Sanavio2024ThreeFluids,Sanavio2024LatticeNumber}), as well as reaction--diffusion problems \cite{Liu2023EfficientEstimation}. These and related applications highlight the potential of the linearization technique as a powerful tool for addressing the complexities inherent in nonlinear infinite-dimensional dynamical systems.

A critical question in the application of the Carleman linearization is its accuracy in approximating the original nonlinear system, particularly the effect of truncation on this accuracy. This question is of paramount importance as it determines how well the linearized system can recover the nonlinear behavior of the original system. Error estimates of the truncated linearization have recently been established for finite-dimensional dynamical systems, i.e., nonlinear ordinary differential equations (ODEs). For instance, \cite{Forets2017ExplicitLinearization} provides comprehensive error estimates for quadratically nonlinear equations, while \cite{Amini2021ErrorSystems,Amini2025CarlemanApproximations} extend these estimates to general nonlinear systems. Although these theoretical estimates generally align well with experimental observations for ODEs, they cannot fully explain the behavior of the linearization when applied to certain infinite-dimensional dynamical systems, including a range of PDEs.
A straightforward method to generalize the error estimates for PDEs involves first discretizing the equation to arrive at a finite-dimensional nonlinear dynamical system, which is then linearized, followed by applying the available error estimates from \cite{Amini2025CarlemanApproximations}. The primary issue in such case is that if the PDE is first discretized and then linearized, the existing error estimates depend on the properties of the discrete operators. For certain classes of nonlinearities, these bounds can be shown to be robust with respect to the discretization; see, e.g., \cite{Liu2023EfficientEstimation} for nonlinear reaction terms. In the more general case, however, this dependency can lead to error estimates that are not robust, a problem first observed by \cite{Gonzalez-Conde2025QuantumDynamics} in the context of fluid flow problems. Despite this, it has been empirically observed that the linearization can approximate nonlinear behavior effectively, even in the context of infinite-dimensional systems and independently of the discretization. An additional challenge for infinite-dimensional systems that has not been addressed in existing literature is proving that the resulting linearization is well-posed.

In this paper, we tackle the questions of well-posedness and convergence of the truncated Carleman linearization when applied to a class of infinite-dimensional parabolic systems. We establish both well-posedness and convergence in an infinite-dimensional and undiscretized setting, which allows for error estimates that are independent of any discretization of the system. This approach not only provides a robust theoretical framework and better understanding of the linearization but also offers practical advantages in numerical applications. We demonstrate that this approach enables the separation of the linearization error from the discretization error through a so-called \textit{linearize-then-discretize} strategy, ultimately allowing us to develop new numerical approximations that mitigate the curse of dimensionality associated with the linearization. This paves the way for a new class of efficient and accurate numerical methods, essential for the practical implementation of the linearization in real-world problems.

The structure of the paper is as follows: In Section~\ref{sec:NonlinearCauchyProblem}, we introduce the nonlinear dynamical system and outline the appropriate assumptions on the state space and the involved operators to guarantee the well-posedness of the nonlinear system. The problem setup covers a range of parabolic second-order differential equations but is not limited to these. Section~\ref{sec:TruncatedCL} offers a non-rigorous introduction to the concept of the truncated Carleman linearization, providing an intuitive understanding of the method. In Section~\ref{sec:Wellposedness}, we equip the Carleman linearization with appropriate function spaces and demonstrate the well-posedness of the problem under additional assumptions. This section is crucial for establishing the theoretical validity of the linearization method in an infinite-dimensional setting. Section~\ref{sec:Convergence}
shows how further assumptions ensure the convergence of the linearization and showcases certain standard systems that fulfill these assumptions, thereby illustrating the practical applicability of the theoretical results. Section~\ref{sec:NumericalMethods} discusses how our approach motivates the development of efficient numerical methods. It is shown how the traditional Carleman linearization of PDEs, which is based on a discretization of the equation, is a special case of a greater class of numerical methods. Furthermore, this section highlights how our approach overcomes the shortcomings of the existing error bounds. The theoretical results are verified in Section~\ref{sec:NumericalExperiments} through a series of numerical experiments, which provide empirical evidence supporting the validity and effectiveness of the proposed convergence results and methods. Finally, Section~\ref{sec:Conclusion} concludes the work and provides an outlook on potential future research directions, suggesting avenues for further exploration of hyperbolic equations and efficient numerical methods.

\section{Nonlinear Cauchy Problems in Hilbert Spaces}
\label{sec:NonlinearCauchyProblem}

We are interested in the analysis of nonlinear Cauchy problems defined on an infinite-dimensional separable Hilbert space $H$. Specifically, we consider systems with quadratic nonlinearities, i.e., of the form
\begin{equation}
\begin{aligned}
    y'(t) + A y(t) + B (y(t)\otimes y(t)) &= f(t) \quad\text{for a.e. } t \in (0, T), \\
    y(0) &= y_0,
\end{aligned}
    \label{eq:CauchyProblemStrong}
\end{equation}
where we seek a solution $y$, given some initial condition $y_0$ and forcing $f$. The final time can be bounded or unbounded, i.e., $T \in (0, \infty]$. The system is considered in the space $H$. We refer to $A$ as the linear part of the dynamical system. It is assumed that $A: D(A) \to H$ is a closed invertible operator with a dense domain $D(A)$ continuously embedded in $H$. Moreover, $A$ generates an analytic semigroup. The operator $B: D(A) \times D(A) \to H$ is assumed continuous and is referred to as the quadratic part of the system. The forcing lives in the space $L^2(0, T; H)$. If the equalities in system~\eqref{eq:CauchyProblemStrong} are considered in $H$ and $y_0 \in D(A)$, the dynamical system is called the strong form of the Cauchy problem. 

\subsection{Weak Formulation}
Since our ultimate goal is the analysis of PDEs and their efficient numerical approximation, it is more suitable to consider the problem in its weak form. The following analysis presents the key ideas of the theory of parabolic systems and refines the assumptions on the present operators. For an in-depth discussion of the topic, the reader is referred to \cite{Bensoussan2007RepresentationSystems}. Let $V = [D(A), H]_{1/2}$ be the interpolation space between $D(A)$ and $H$, and $V'$ be its topological dual. Let $(\cdot, \cdot)_H$ be the inner product of $H$ and $\langle \cdot, \cdot \rangle_V$ the duality pairing of $V' \times V$. We assume that the embedding $V \hookrightarrow H \cong H' \hookrightarrow V'$ is continuous and dense, where each element $h \in H$ is identified with an element of $H'$ via the mapping $g \mapsto \langle h, g\rangle_V = (h, g)_H$ for all $g \in H$. Then, $(V, H, V')$ forms a Gelfand triple. Assume that $A$ and $B$ can be extended to $A: V \to V'$ and $B: V \times V \to V'$. Then, the weak form of the Cauchy problem reads as
\begin{equation}
    \begin{aligned}
    y'(t) + A y(t) + B (y(t) \otimes y(t)) &= f(t) \quad\text{in } L^2(0, T; V'), \\
    y(0) &= y_0,
    \end{aligned}
    \label{eq:CauchyProblemWeak}
\end{equation}
where the equality is to be considered in the dual space $L^2(0, T; V')$ and $y_0 \in H$, and the solution is sought in the space $y \in W(0, T; V, V')$ with
\begin{align*}
    W(0, T; X, Y) &= \left\{ y \in L^2(0, T; X) \mid y' \in L^2(0, T; Y) \right\}, \\
    \| y \|_{W(0, T; X, Y)}^2 &= \| y \|_{L^2(0, T; X)}^2 + \| y' \|_{L^2(0, T; Y)}^2
\end{align*}
for any pair of Banach spaces $X$ and $Y$. Due to the Lions--Magenes lemma \cite{Lions1972Non-HomogeneousApplications}, the space of continuous functions in $H$ is continuously embedded in $W(0, T; V, V')$, i.e., $C(0, T; H) \hookrightarrow W(0, T; V, V')$, which makes point-wise evaluations well-posed. The weak formulation allows us to impose weaker regularity on the forcing $f \in L^2(0, T; V')$.

\subsection{Linear Equation}
We assume that $V$ and $V'$ admit an eigenvalue representation. For that, assume there exists a closed positive self-adjoint operator $L$ with domain $D(L)$ such that $D(L^{\bar{\alpha}/2}) = D(A^{\bar{\alpha}/2})$ for some $\bar{\alpha} \ge 1$. This allows us to define the norm of $V$ by the equivalent norm $\| \cdot \|_V := \|L^{1/2} \cdot\|_H$. Since $L$ is self-adjoint, we can identify this norm and, hence, the space $V$ with its eigenvectors and eigenvalues. Let $\{\varphi_i\}_{i \in \mathbb{N}}$ be eigenvectors of $L$ forming an orthonormal basis of $H$ and $\{\lambda_i\}_{i \in \mathbb{N}}$ its corresponding eigenvalues. Then, we can rewrite the $V$-norm as
\begin{align*}
    \| v \|_V^2 = \sum_{i = 1}^\infty \lambda_i \left( v, \varphi_i \right)_H^2.
\end{align*}
Assume that $A$ is bounded, i.e.,
\begin{align}
\refstepcounter{assumptionCounter}
\tag{A\theassumptionCounter}
\label{assumption:ABounded}
    \left| \langle A v, w\rangle_V \right| \le \beta \|v\|_V \|w \|_V.
\end{align}
Furthermore, let $A$ be $V$-$H$ coercive (sometimes referred to as weakly coercive), which is achieved if there exist constants $\lambda \ge 0$ and $\gamma > 0$ with
\begin{align}
\refstepcounter{assumptionCounter}
\tag{A\theassumptionCounter}
\label{assumption:AVHCoercive}
\langle A v, v\rangle_V + \lambda \|v\|^2_H \ge \gamma \|v\|_V^2 \quad \text{for all } v \in V.
\end{align}
This implies that the linear equivalent of our Cauchy problem is well-posed.
\begin{lemma}
\label{lem:WellPosednessLinear}
    Let $T < \infty$ and $A(t)$ be a family of $V$-$H$ operators fulfilling \eqref{assumption:ABounded} and \eqref{assumption:AVHCoercive} for $t \in (0, T)$ with constants $\beta$, $\gamma$, and $\lambda$ independent of $t$. Then, for every $y_0 \in H$ and $f \in L^2(0, T; V')$, the linear Cauchy problem
    \begin{equation*}
    \begin{aligned}
    y'(t) + A(t) y(t) &= f(t) \quad\text{in } L^2(0, T; V'), \\
    y(0) &= y_0
    \end{aligned}
    %\label{eq:CauchyProblemWeakLinear}
    \end{equation*}
has a unique solution $y \in W(0, T; V, V')$. This solution satisfies the estimate
\begin{align}
\label{eq:ParabolicEstimate}
    \| y(t) \|_H^2 + \gamma \int_0^t \exp(2 \lambda (t - \tau)) \| y(\tau)\|^2_V \mathrm{d}\tau \le \exp(2 \lambda t)\| y_0\|_H^2 + \frac{1}{\gamma} \int_0^t \exp(2 \lambda (t - \tau)) \| f(\tau)\|_{V'}^2 \mathrm{d}\tau
\end{align}
for all $t \in [0, T)$. Moreover, there exists a constant $c_L \ge 1$, independent of $y_0$, $f$, and $T$, such that
\begin{align}
\label{eq:ParabolicEstimateWNorm}
    \|y\|_{L^\infty(0, T; H)} + \|y\|_{W(0, T; V, V')} \le c_L \exp(\lambda T) \left( \|y_0 \|_H + \| f \|_{L^2(0, T; V')} \right).
\end{align}
If $\lambda = 0$, then one can choose $T = \infty$.
\end{lemma}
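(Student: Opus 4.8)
The plan is to establish this as a standard result from the theory of linear parabolic evolution equations, following the Lions--Magenes / Galerkin framework. The cleanest route is the Faedo--Galerkin method: I would first build finite-dimensional approximations, derive uniform a priori bounds that already encode the estimate~\eqref{eq:ParabolicEstimate}, and then pass to the limit to obtain existence; uniqueness and the $W$-norm bound~\eqref{eq:ParabolicEstimateWNorm} then follow from the energy estimate.

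First I would set up the Galerkin scheme using the orthonormal eigenbasis $\{\varphi_i\}$ of $L$ that is already available from the eigenvalue representation of $V$. Projecting onto $V_n = \operatorname{span}\{\varphi_1, \dots, \varphi_n\}$ turns the problem into a linear ODE system with measurable-in-$t$ coefficients, whose solvability on $(0,T)$ follows from Carathéodory theory once one notes that the coercivity~\eqref{assumption:AVHCoercive} prevents blow-up. The central computation is the energy estimate: testing the Galerkin equation against the approximate solution $y_n(t)$ and using~\eqref{assumption:AVHCoercive} together with Young's inequality on the pairing $\langle f, y_n\rangle_V \le \tfrac{1}{2\gamma}\|f\|_{V'}^2 + \tfrac{\gamma}{2}\|y_n\|_V^2$ yields, after multiplying by the integrating factor $\exp(2\lambda t)$, precisely the differential form of~\eqref{eq:ParabolicEstimate}. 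Integrating in time and recalling that $\tfrac{1}{2}\tfrac{d}{dt}\|y_n\|_H^2 = \langle y_n', y_n\rangle_V$ gives the bound uniformly in $n$.

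Next I would extract weak limits: the uniform bound gives $y_n \rightharpoonup y$ in $L^2(0,T;V)$ and boundedness in $L^\infty(0,T;H)$, while the boundedness of $A$ in~\eqref{assumption:ABounded} controls $A y_n$ in $L^2(0,T;V')$, so from the equation $y_n'$ is bounded in $L^2(0,T;V')$ and converges weakly as well. Passing to the limit in the weak formulation against test functions in $\bigcup_n V_n$ (dense in $V$) identifies $y$ as a solution in $W(0,T;V,V')$; the initial condition $y(0)=y_0$ is recovered using the Lions--Magenes embedding $W(0,T;V,V') \hookrightarrow C(0,T;H)$ cited in the excerpt, together with weak lower semicontinuity of the norm to preserve~\eqref{eq:ParabolicEstimate} in the limit. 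For~\eqref{eq:ParabolicEstimateWNorm}, I would bound $\|y\|_{L^\infty(0,T;H)}$ and the $L^2(0,T;V)$-part directly from~\eqref{eq:ParabolicEstimate}, then re-insert these into the equation to control $\|y'\|_{L^2(0,T;V')} \le \|f\|_{L^2(0,T;V')} + \beta\|y\|_{L^2(0,T;V)}$ via~\eqref{assumption:ABounded}; collecting the exponential factors produces the constant $c_L$, which is manifestly independent of $y_0$, $f$, and $T$. Uniqueness follows by applying~\eqref{eq:ParabolicEstimate} to the difference of two solutions with $y_0 = 0$ and $f = 0$.

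The main obstacle I anticipate is bookkeeping the exponential weight $\exp(2\lambda(t-\tau))$ correctly so that the estimate holds with the stated constants and, crucially, so that the constant $c_L$ in~\eqref{eq:ParabolicEstimateWNorm} is \emph{independent of $T$} — this is what legitimizes taking $T=\infty$ when $\lambda=0$. Getting the $T$-independence requires keeping the coercivity constant $\gamma$ separate from the exponential growth rather than absorbing factors of $T$ into the constant; for the $\lambda=0$ case one checks that all bounds remain finite as $T \to \infty$ and invokes a standard exhaustion argument over $(0,T_k)$ with $T_k \uparrow \infty$, using uniqueness to glue the solutions. The remaining subtlety is verifying that the family $A(t)$ with $t$-dependence, measurability, and uniform constants fits the Carathéodory hypotheses; since the assumptions are imposed uniformly in $t$, this is routine but must be stated carefully.
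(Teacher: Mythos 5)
Your proposal is correct and coincides in substance with the paper's route: the paper does not reprove this lemma but cites Theorem~II.2.1.1 of \cite{Bensoussan2007RepresentationSystems} for existence and uniqueness and \cite{Zeidler1990II/Operators,Quarteroni1994NumericalEquations} for \eqref{eq:ParabolicEstimate}, and those references establish the result by precisely the Faedo--Galerkin energy argument you outline, with \eqref{eq:ParabolicEstimateWNorm} then deduced from \eqref{eq:ParabolicEstimate} and the boundedness of $A(t)$ exactly as you do. The only slip is cosmetic: the integrating factor for the differential inequality is $\exp(-2\lambda t)$, with the final form of \eqref{eq:ParabolicEstimate} obtained by multiplying back through by $\exp(2\lambda t)$.
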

\begin{proof}
    See Theorem II.2.1.1 in \cite{Bensoussan2007RepresentationSystems} for the uniqueness and existence, and \cite{Zeidler1990II/Operators,Quarteroni1994NumericalEquations} for the estimate \eqref{eq:ParabolicEstimate}. The inequality \eqref{eq:ParabolicEstimateWNorm} is a direct consequence of \eqref{eq:ParabolicEstimate} and the properties of $A(t)$.
\end{proof}
While the linear operator $A$ in the original Cauchy problem is time-invariant and fulfills all the assumptions of Lemma~\ref{lem:WellPosednessLinear}, the result covers time-dependent linear operators $A(t)$. This will be leveraged to prove the Carleman linearization's well-posedness and convergence. Additionally, the result applies not only to the spaces $V$ and $H$, but also to other suitable spaces forming Gelfand triples with the same properties. In this case, the concepts of boundedness and coercivity should be understood in their respective spaces. 

The system remains well-posed even under weak, time-dependent perturbations, as the following lemma shows.
\begin{lemma}
\label{lem:PerturbationLinear}
    Let $T < \infty$ and $A(t)$ be a family of operators fulfilling the same assumptions as in Lemma~\ref{lem:WellPosednessLinear}. Let $K(t)$ be a family of operators
    \begin{align*}
        K(t): V \to H \quad \text{for a.e. } t \in (0, T)
    \end{align*}
    such that for all $v \in V$ the mapping $t \mapsto K(t) v$ belongs to $L^\infty(0, T; H)$.
    Then, $A(t) + K(t)$ is uniformly $V$-$H$ coercive and the variational Cauchy problem
    \begin{equation*}
        \begin{aligned}
        y'(t) + (A(t) + K(t)) y(t) &= f(t) \quad\text{in } L^2(0, T; V'), \\
        y(0) &= y_0
        \end{aligned}
    \end{equation*}
    has a unique solution in $W(0, T; V, V')$ for all $y_0 \in H$ and $f \in L^2(0, T; V')$ that depends continuously on the data.
\end{lemma}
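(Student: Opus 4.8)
The plan is to reduce Lemma~\ref{lem:PerturbationLinear} to Lemma~\ref{lem:WellPosednessLinear} by showing that the perturbed operator $A(t) + K(t)$ still satisfies the boundedness assumption \eqref{assumption:ABounded} and the $V$-$H$ coercivity assumption \eqref{assumption:AVHCoercive}, possibly with a modified coercivity shift $\lambda$. The key observation is that $K(t)$ maps $V$ into $H$ uniformly in time, and since the embedding $H \hookrightarrow V'$ is continuous, $K(t)$ may be regarded as a family of bounded operators $V \to V'$ whose operator norm is bounded uniformly in $t$ by some constant $M := \operatorname*{ess\,sup}_{t} \|K(t)\|_{\mathcal{L}(V, H)}$ times the embedding constant $c_{H \hookrightarrow V'}$.

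First I would verify boundedness: for $v, w \in V$ we estimate $|\langle K(t) v, w \rangle_V| = |(K(t) v, w)_H| \le \|K(t) v\|_H \|w\|_H \le M \|v\|_V \, c_{H \hookrightarrow V'}' \|w\|_V$, using Cauchy--Schwarz in $H$ and the continuity of the embedding $V \hookrightarrow H$. Combined with \eqref{assumption:ABounded} for $A(t)$ and the triangle inequality, this shows $A(t) + K(t)$ is bounded with constant $\beta + M c'$ for a suitable embedding constant $c'$.

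Next I would establish uniform $V$-$H$ coercivity, which is the crux. For $v \in V$ we write $\langle (A(t) + K(t)) v, v \rangle_V = \langle A(t) v, v \rangle_V + (K(t) v, v)_H$. The perturbation term is controlled by $|(K(t) v, v)_H| \le \|K(t) v\|_H \|v\|_H \le M \|v\|_V \|v\|_H$. The subtlety is that this bound involves the product $\|v\|_V \|v\|_H$ rather than $\|v\|_H^2$ alone, so it cannot simply be absorbed into the $\lambda \|v\|_H^2$ term. The standard remedy is Young's inequality (the Peter--Paul trick): for any $\varepsilon > 0$, $M \|v\|_V \|v\|_H \le \tfrac{\varepsilon}{2} \|v\|_V^2 + \tfrac{M^2}{2\varepsilon} \|v\|_H^2$. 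Choosing $\varepsilon = \gamma$, the $\tfrac{\gamma}{2}\|v\|_V^2$ piece is absorbed by the coercivity of $A(t)$, leaving a reduced coercivity constant $\gamma/2 > 0$ and an enlarged shift $\lambda + \tfrac{M^2}{2\gamma}$. Hence $A(t) + K(t)$ satisfies \eqref{assumption:AVHCoercive} with constants $\tilde\gamma = \gamma/2$ and $\tilde\lambda = \lambda + M^2/(2\gamma)$, both independent of $t$.

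Having verified both assumptions with $t$-independent constants, I would then invoke Lemma~\ref{lem:WellPosednessLinear} directly with $A(t)$ replaced by $A(t) + K(t)$ to obtain existence and uniqueness of a solution $y \in W(0, T; V, V')$, together with the a~priori estimate \eqref{eq:ParabolicEstimateWNorm} in the modified constants; the continuous dependence on the data $(y_0, f)$ is precisely the linear bound in \eqref{eq:ParabolicEstimateWNorm}. The main obstacle is the coercivity step: one must recognize that the $V$-to-$H$ mapping property of $K(t)$ yields only a mixed $\|v\|_V\|v\|_H$ bound on the perturbation, and that the Young-inequality splitting is essential to retain strict positivity of the effective coercivity constant while shifting the excess into the $H$-norm term. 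The measurability and essential boundedness of $t \mapsto K(t) v$ in $H$ are what guarantee the constant $M$ is finite and $t$-independent, so I would note that the hypothesis $t \mapsto K(t)v \in L^\infty(0,T;H)$ together with a uniform boundedness argument furnishes the required uniform operator bound.
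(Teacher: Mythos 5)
Your argument is correct, and it is essentially a self-contained reconstruction of the result the paper simply cites: the paper's entire proof is ``See Theorem II.2.1.2 in Bensoussan et al.\ with $\theta = 0$,'' and the proof of that theorem is precisely your reduction --- treat $K(t)$ as a lower-order perturbation, absorb the mixed term $M\|v\|_V\|v\|_H$ via Young's inequality into a halved coercivity constant $\gamma/2$ and an enlarged shift $\lambda + M^2/(2\gamma)$, and then invoke the unperturbed well-posedness result (here Lemma~\ref{lem:WellPosednessLinear}) with the new $t$-independent constants. What your version buys is transparency about exactly which constants degrade and how, which is relevant later in the paper where the authors observe that these perturbed constants blow up with $N$. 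The one step you should actually execute rather than gesture at is the existence of the uniform bound $M$: the hypothesis only gives, for each fixed $v$, that $t \mapsto K(t)v$ lies in $L^\infty(0,T;H)$, with an exceptional null set and a bound $C_v$ that a priori depend on $v$. The clean fix is to view $v \mapsto K(\cdot)v$ as a linear map $V \to L^\infty(0,T;H)$ and check it is closed (using that each $K(t)$ is bounded $V \to H$), so the closed graph theorem yields $\operatorname*{ess\,sup}_t \|K(t)v\|_H \le M \|v\|_V$; a bare appeal to Banach--Steinhaus is slightly delicate here because of the $v$-dependent null sets. With that lemma in place, the rest of your argument, including the continuous dependence via \eqref{eq:ParabolicEstimateWNorm}, goes through as written for $T < \infty$.
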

\begin{proof}
    See Theorem II.2.1.2 in \cite{Bensoussan2007RepresentationSystems} with $\theta = 0$.
\end{proof}

\subsection{Nonlinear Equation}
Moving to the nonlinear equation, we have to add assumptions on $B$ to show the well-posedness of the Cauchy problem. While there are various suitable types of assumptions, we focus on a particular choice that covers most common PDE problems. Assume there is a constant $c_B$ such that
\begin{align}
\refstepcounter{assumptionCounter}
\tag{A\theassumptionCounter}
\label{assumption:BBounded}
    \left| \langle B(u \otimes v), w\rangle_V \right| \le c_B \| u \|^{\frac{1}{2}}_H \| u \|^{\frac{1}{2}}_V \| v \|^{\frac{1}{2}}_H \| v \|^{\frac{1}{2}}_V \| w \|_V \quad \text{for all } u, v, w \in V.
\end{align}
The following lemma is a modification of Lemma~5 in \cite{Breiten2019FeedbackApproximation} and proves the local existence and uniqueness of solutions to the nonlinear Cauchy problem on finite time intervals for unstable $A$ and on unbounded time intervals for the stable case. This adaptation also takes into account the explicit dependence of the constants on $T$, which will play a crucial role in the convergence behavior of the Carleman linearization.
\begin{lemma}
\label{lem:WellPosednessNonlinear}
    Let $T < \infty$, and $A$ and $B$ fulfill \eqref{assumption:ABounded}, \eqref{assumption:AVHCoercive}, and \eqref{assumption:BBounded}. There exists a constant $c_N$ independent of $T$ and $c_B$ such that for all $y_0 \in H$ and $f \in L^2(0, T; V')$ with
    \begin{align*}
        \| y_0 \|_H + \|f\|_{L^2(0, T; V')} \le \rho(T) := \frac{1}{8 c_N^2 \exp(2 \lambda T) c_B},
    \end{align*}
    there exists a unique solution $y \in W(0, T; V, V')$ to the nonlinear Cauchy problem \eqref{eq:CauchyProblemWeak}. Moreover, the estimate
    \begin{align*}
        \|y_0\|_H + \| y \|_{W(0, T; V, V')} \le 2 c_N \exp(\lambda T) \left(\| y_0 \|_H + \|f\|_{L^2(0, T; V')}\right)
    \end{align*}
    holds. If the coercivity constant $\lambda$ of the linear operator is zero, then the same result holds for $T = \infty$.
\end{lemma}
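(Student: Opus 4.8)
The plan is to recast \eqref{eq:CauchyProblemWeak} as a fixed-point equation and apply the Banach fixed-point theorem, using the linear theory of Lemma~\ref{lem:WellPosednessLinear} as the solution operator and the quadratic bound \eqref{assumption:BBounded} to control the nonlinearity. Concretely, for a given $z$ I would let $\Phi(z) = y$ be the unique solution furnished by Lemma~\ref{lem:WellPosednessLinear} of the linear problem $y' + Ay = f - B(z \otimes z)$, $y(0) = y_0$. Since every image $\Phi(z)$ carries the correct initial value, a fixed point of $\Phi$ is exactly the sought solution, and its initial condition is automatic. I would work in the closed ball $\mathcal{B}_R$ of radius $R$ (to be fixed) in $W(0,T;V,V')$ equipped with the combined norm $\|z\|_\ast := \|z\|_{L^\infty(0,T;H)} + \|z\|_{W(0,T;V,V')}$; recalling the continuous embedding $W(0,T;V,V') \hookrightarrow C(0,T;H)$, this norm is equivalent to the $W$-norm, so $\mathcal{B}_R$ is a closed subset of a Banach space and hence complete.

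The technical heart of the argument is to show that $\Phi$ is well-defined and contractive on $\mathcal{B}_R$, which reduces to estimating the quadratic term in $L^2(0,T;V')$. Taking $u = v = z$ and $w$ arbitrary in \eqref{assumption:BBounded} gives $\|B(z\otimes z)\|_{V'} \le c_B \|z\|_H \|z\|_V$; squaring, integrating in time, pulling out the $L^\infty(0,T;H)$-norm, and applying Cauchy--Schwarz in time yields $\|B(z\otimes z)\|_{L^2(0,T;V')} \le c_B \|z\|_{L^\infty(0,T;H)} \|z\|_{L^2(0,T;V)} \le c_B \|z\|_\ast^2$. The half-powers in \eqref{assumption:BBounded} are precisely what is needed here: they interpolate the quadratic nonlinearity into the natural energy norm, so that no regularity beyond membership in $\mathcal{B}_R$ is required. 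For the contraction I would exploit the bilinearity of $B$ through $z_1 \otimes z_1 - z_2 \otimes z_2 = z_1 \otimes (z_1 - z_2) + (z_1 - z_2) \otimes z_2$ and repeat the same estimate to obtain $\|B(z_1 \otimes z_1) - B(z_2 \otimes z_2)\|_{L^2(0,T;V')} \le 2 c_B R \|z_1 - z_2\|_\ast$ on $\mathcal{B}_R$.

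Feeding these bounds into \eqref{eq:ParabolicEstimateWNorm} (applied to $\Phi(z)$ with data $f - B(z\otimes z)$, and to $\Phi(z_1) - \Phi(z_2)$ with zero initial data) and abbreviating $c := c_L \exp(\lambda T)$ and $d := \|y_0\|_H + \|f\|_{L^2(0,T;V')}$, I obtain $\|\Phi(z)\|_\ast \le c(d + c_B R^2)$ and $\|\Phi(z_1) - \Phi(z_2)\|_\ast \le 2 c c_B R \|z_1 - z_2\|_\ast$. Choosing $R = 2cd$, both the self-mapping condition $c(d + c_B R^2) \le R$ and the contraction condition $2 c c_B R < 1$ hold as soon as $4 c^2 c_B d \le \tfrac{1}{2}$, i.e.\ exactly when $d \le (8 c_L^2 \exp(2\lambda T) c_B)^{-1} = \rho(T)$, which identifies $c_N = c_L \ge 1$, a constant independent of $T$ and $c_B$ by Lemma~\ref{lem:WellPosednessLinear}. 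The Banach fixed-point theorem then delivers a unique $y \in \mathcal{B}_R$, and $\|y_0\|_H + \|y\|_{W(0,T;V,V')} \le \|y\|_\ast \le R = 2 c_N \exp(\lambda T) d$ is the claimed estimate. The main obstacle is organizing the nonlinearity estimate so that the $T$-dependence enters only through the factor $\exp(\lambda T)$ inherited from \eqref{eq:ParabolicEstimateWNorm}, keeping $c_N$ clean; once this is achieved, the case $\lambda = 0$ and $T = \infty$ follows because Lemma~\ref{lem:WellPosednessLinear} then provides a $T$-uniform $c_L$, so the radius and threshold are uniform in $T$ and the contraction argument transfers verbatim to the half-line.
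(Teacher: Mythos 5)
Your proposal is correct and follows essentially the same route as the paper's own proof: a Banach fixed-point argument for the map $z \mapsto \Phi(z)$ solving the linear problem with data $f - B(z\otimes z)$, on a ball of radius $R = 2c_L\exp(\lambda T)\,d$, with the quadratic and Lipschitz bounds on $B$ derived from \eqref{assumption:BBounded} exactly as in the paper's auxiliary lemmas (your version even yields the cleaner choice $c_N = c_L$, whereas the paper takes a maximum involving $c_B$ and $T$ only to force the auxiliary parameter $\delta \le 1$). The one caveat is that the contraction argument gives uniqueness only within $\mathcal{B}_R$, while the lemma asserts uniqueness in all of $W(0,T;V,V')$; the paper defers precisely this step to \cite{Breiten2019FeedbackApproximation}, so you would need to supply the same additional argument or citation.
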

\begin{proof}
The proof is provided in Appendix~\ref{appendix:ProofsNonlinearCauchyProblem}.
\end{proof}

\section{Truncated Carleman Linearization}
\label{sec:TruncatedCL}
We start with an informal definition of the Carleman linearization, focusing on the concept and leaving a rigorous mathematical analysis for the subsequent sections. The main idea is to lift the dynamical system into higher-dimensional tensor products of the original solution space. While this increases the dimensionality of the spaces, the resulting system is linear. Thus, the linearization can be seen as a trade-off between the nonlinearity and the complexity of the underlying spaces. 

We first introduce the operators necessary for the linearization. Define the Kronecker sum of an operator or function $T$ by
\begin{align*}
    \bigoplus^k T := \sum_{i = 1}^k \left( \bigotimes^{i - 1} I \right) \otimes T \otimes \left( \bigotimes^{k - i} I \right),
\end{align*}
where $I$ denotes the identity operator.
This allows us to define
\begin{align*}
    A_k = \bigoplus^k A, \quad B_k = \bigoplus^k B, \quad F_k(t) = \bigoplus^k (-f(t)).
\end{align*}
To illustrate the structure and action of these operators, consider $u, v, w \in V$. Then, for $k = 2$, we obtain
\begin{align*}
    A_2 (u \otimes v) &= (Au) \otimes v + u \otimes (Av), \\
    B_2 (u \otimes v \otimes w) &= (B (u \otimes v)) \otimes w + u \otimes (B (v \otimes w)),\\
    F_2(t) u &= (-f(t)) \otimes u + u \otimes (-f(t)).
\end{align*}
If a solution $y$ to \eqref{eq:CauchyProblemStrong} is sufficiently regular, then its moments $y^{(k)}(t) := \bigotimes^k y(t)$ fulfill the equation
\begin{align*}
    \frac{\mathrm{d}}{\mathrm{d}t} y^{(1)}(t) + A_1 y^{(1)}(t) + B_1 y^{(2)}(t) &= f(t), \\
    \frac{\mathrm{d}}{\mathrm{d}t} y^{(k)}(t) + F_k(t) y^{(k - 1)}(t) + A_k y^{(k)}(t) + B_k y^{(k + 1)}(t) &= 0 &\text{for } k > 1, \\
    y^{(k)}(0) &= y_0^{(k)} &\text{for } k \ge 1.
\end{align*}
These equations establish a linear relationship between moments of order $k - 1$, $k$, and $k + 1$. Instead of solving for $y$, we can seek a solution consisting of moments $y^{(k)}$ that fulfill the above dynamical system. While this erases the nonlinearity from the dynamical system, the approach has a caveat. To fully describe the equation for $y^{(k)}$, one needs $y^{(k + 1)}$. This, in turn, requires one to add an equation for $y^{(k + 1)}$, ultimately resulting in an infinite chain of equations, which is called the Carleman linearization. It is not obvious that the infinite system of equations is well-posed; neither is it clear whether a solution of this chain solves the original nonlinear Cauchy problem. For self-adjoint positive $A$ and $f = 0$ some of these questions were answered in, e.g., \cite{Vishik1988MathematicalHydromechanics,Fursikov1989OnSystem,Fursikov1993MomentSide} in the scope of fluid flow problems, where the chain is referred to as the \textit{Friedman--Keller chain of equations} and arises from a statistical analysis of Navier--Stokes equations. Here, we are interested in the computation and approximation aspects of the linearization and will, therefore, omit an analysis of the infinite system. Instead, we explore an approach to circumvent the infinite sequence, restricting our analysis to its finite equivalent.

In order to arrive at a computable dynamical system, we have to close the infinite sequence of equations, sometimes referred to as \textit{moment closure}. For the Carleman linearization, the most common approach is to truncate the system and set $y^{(N + 1)}(t) = 0$ for a fixed $N > 1$, decoupling the first $N$ equations. The rationale behind this approach is that higher-order terms are expected to become negligible for the first moment. This results in the so-called \textit{truncated Carleman linearization}, or sometimes also referred to as its \textit{finite-section approximation}. Then, the dynamical system can be written as
\begin{equation}
\begin{aligned}
    y_N'(t) + \mathcal{A}_N(t) y_N(t) &= f_N(t) \quad \text{ for } t \in (0, T),\\
    y_N(0) &= y_{N, 0} = \begin{pmatrix} y_0^{(1)} & y_0^{(2)} & \cdots & y_0^{(N)} \end{pmatrix}^T, \\
\end{aligned}
\tag{TC}
\label{eq:TruncatedCarleman}
\end{equation}
with the block operator matrix and right-hand side defined as
\begin{align*}
    \mathcal{A}_N(t) &= \begin{pmatrix}
        A_1 & B_1 & 0 & \cdots & 0 \\
        F_2(t) & A_2 & B_2 & \ddots & \vdots \\
        0 & \ddots & \ddots & \ddots & 0 \\
        \vdots & \ddots & F_{N - 1}(t) & A_{N - 1} & B_{N - 1} \\
        0 & \cdots & 0 & F_N(t) & A_N
    \end{pmatrix}, \quad
    f_N(t) = \begin{pmatrix} f(t) \\ 0 \\ \vdots \\ 0 \end{pmatrix}.
\end{align*}
We denote the number $N$ as the \textit{truncation level}.

\section{Well-Posedness of the Truncated Carleman Linearization}
\label{sec:Wellposedness}
To rigorously define the system~\eqref{eq:TruncatedCarleman}, we introduce suitable function spaces and impose specific assumptions on $B$ and $f$, which build upon and extend the approaches of \cite{Vishik1988MathematicalHydromechanics,Fursikov1989OnSystem,Fursikov1993MomentSide}. These criteria will ultimately enable us to show existence and uniqueness of the dynamical system, i.e., its well-posedness. This is done by establishing a weak formulation of the truncated linearization.

\subsection{Function Spaces}
Let us define the family of interpolation spaces with the corresponding norms
\begin{align*}
\begin{alignedat}{3}
    V^\alpha &:= [D(L), H]_{1 - \alpha / 2}, &\quad \| v \|_{V^\alpha}^2 &:= \| L^{\alpha/2} v\|_H^2 = \sum_{i = 1}^\infty \lambda_i^\alpha (v, \varphi_i)_H^2, \\
    V^{-\alpha} &:= \left(V^\alpha\right)', &\quad \|v\|_{V^{-\alpha}}^2 &:= \sup_{0 \neq w \in V^\alpha} \frac{\langle v, w\rangle_{V^{-\alpha}}^2}{\|w \|_{V^\alpha}^2} = \sum_{i = 1}^\infty \lambda_i^{-\alpha} \langle v, \varphi_i\rangle_V^2
\end{alignedat}
\end{align*}
for $\alpha \ge 0$. This means that $V^0 = H$ and $V^1 = V$. For example, in the case of the Sobolev spaces $H = L^2(\Omega)$ and $V = H^1_0(\Omega)$ for a bounded Lipschitz domain $\Omega \subset \mathbb{R}^n$, these spaces can be identified with (cf.~\cite[pp.~283 ff.]{Lasiecka2000ControlEquations})
\begin{align*}
    V^\alpha = \begin{cases}
        H^{\alpha}(\Omega) & \text{for } 0 \le \alpha < \frac{1}{2}, \\
        H^{\alpha}(\Omega) \cap H^1_0(\Omega) & \text{for } \frac{1}{2} \le \alpha \le 2.
    \end{cases}
\end{align*}
Furthermore, define the tensor product spaces for $k \in \mathbb{N}$ and $q \in [-1, 1]$
\begin{align*}
    H(k) = \bigotimes_{j = 1}^k H, \quad V^\alpha(k) = \bigotimes_{j = 1}^k V^\alpha, \quad    V^\alpha_q(k) = \bigcap_{i = 1}^k \bigotimes_{j = 1}^k V^{\alpha + \delta_{i, j} q},
\end{align*}
where $V^\alpha_q(k)$ can be interpreted as the tensor product space $V^\alpha(k)$ with increased ($q > 0$) or decreased ($q < 0$) regularity in single directions. The space $H(k)$ is a Hilbert space endowed with the standard tensor product scalar product. Note that $V^\alpha(k) = V^\alpha_0(k)$ and $H(k) = V^0(k)$. The corresponding norms are defined as
\begin{align*}
    \| v \|_{V_q^\alpha(k)}^2 &:= \sum_{\vec{i} \in \mathbb{N}^k} \pi(\lambda_{\vec{i}})^\alpha \sigma(\lambda_{\vec{i}})^q \langle v, \varphi_{\vec{i}} \rangle^2_V.
    %\\
    % \text{especially: }\| v_1 \otimes \cdots \otimes v_k \|_{V^0_1(k)}^2 &= \sum_{i = 1}^k \prod_{j = 1}^k \| v_j \|_{V^{\delta_{i, j}}}^2.
\end{align*}
The function $\varphi_{\vec{i}}$ is defined as $\varphi_{\vec{i}} = \varphi_{\vec{i}_1} \otimes \cdots \otimes \varphi_{\vec{i}_k}$ for any multi-index $\vec{i} \in \mathbb{N}^k$ and the associated tuple of eigenvalues as $\lambda_{\vec{i}} = (\lambda_{\vec{i}_1}, \cdots, \lambda_{\vec{i}_k})$. Furthermore, we set $\pi(\lambda_{\vec{i}}) := \prod_{l = 1}^k \lambda_{\vec{i}_l}$ and $\sigma(\lambda_{\vec{i}}) := \sum_{l = 1}^k \lambda_{\vec{i}_l}$.
In the special case of $k = 1$,
we have that $V^\alpha_q(1) = V^{\alpha + q}$. The dual space $(V_{1}^0(k))'$ can be identified with $V_{-1}^0(k)$ and their norms coincide.

Finally, we provide a function space for a complete solution $y_N$ to \eqref{eq:TruncatedCarleman}, which assembles moments in $V^\alpha_q(k)$ into a vector of moments $y_N$. For that, define the direct sums of vector spaces
\begin{align*}
\begin{alignedat}{3}
U_q^\alpha(N) &:= \bigoplus_{k = 1}^N V_q^\alpha(k), \quad &\| (v_1, \dots, v_N) \|_{U_q^\alpha(N)}^2 &= \sum_{k = 1}^N \|v_k\|_{V_q^\alpha(k)}^2, \\
Y(N) &:= \bigoplus_{k = 1}^N H(k), \quad &\| (v_1, \dots, v_N) \|_{Y(N)}^2 &= \sum_{k = 1}^N \|v_k\|_{H(k)}^2.
\end{alignedat}
\end{align*}
The space $Y(N)$ forms a Hilbert space with the scalar product $( (u_1, \dots, u_N), (v_1, \dots, v_N))_{Y(N)} = \sum_{k = 1}^N ( u_k, v_k)_{H(k)}$. It is noted that $(U_1^0(N), Y(N), U_{-1}^0(N))$ forms a Gelfand triple as the dual space $(U_1^0(N))'$ can be identified with $U_{-1}^0(N)$.

\subsection{Properties of $A_k$, $B_k$, and $F_k$}
The operators $A_k$, $B_k$, and $F_k$ are essential components of the truncated Carleman linearization. The construction of the operators through the Kronecker sum suggests that the operators' norms grow at least linearly in $k$. However, the dependence of the boundedness and the coercivity on $k$ can be refined and, in certain cases, eliminated, which will play a crucial role in proving the convergence of the linearization.

The operator $A_k$ is a closed operator with domain $D(A_k) = V_2^0(k)$. From the definition of the space $V_1^0(k)$, we know that $A_k$ can be extended to $A_k: V_1^0(k) \to V_{-1}^0(k)$. Similarly to above, the function space of choice for a weak formulation is $D(A_k^{1/2})$.
Theorem~13.1 in \cite{Lions1972Non-HomogeneousApplications} combined with the properties of $L$ implies that we can identify the interpolation space as $D(A_k^{1/2}) = V^0_1(k)$, and, therefore, $D(A_k^{1/2})' = V^0_{-1}(k)$. Thus, the operator $A_k: D(A_k^{1/2}) \to D(A_k^{1/2})'$ is well defined.
The following lemma demonstrates that not only can the Kronecker sum of $A$ be extended to the interpolation space, but it also maintains coercivity and boundedness.
\begin{lemma}
\label{lem:Ak}
Assume the operator $A$ fulfills \eqref{assumption:ABounded} and \eqref{assumption:AVHCoercive} with constants $\beta$, $\gamma$, and $\lambda$. Then, the operator $A_k: V_1^0(k) \to V_{-1}^0(k)$ is bounded and $V_1^0(k)$-$H(k)$ coercive with constants
\begin{align*}
    \langle A_k u, v \rangle_{V_1^0(k)} &\le \beta \|u\|_{V_1^0(k)} \|v\|_{V_1^0(k)},  \\
    \langle A_k v, v\rangle_{V_1^0(k)} + k \lambda \| v \|^2_{H(k)} &\ge \gamma \| v \|_{V_1^0(k)}^2
\end{align*}
for all $u, v \in V_1^0(k)$.
\end{lemma}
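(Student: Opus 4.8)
The plan is to exploit the additive (Kronecker-sum) structure of $A_k$ together with the matching additive structure of the $V_1^0(k)$-norm, thereby reducing the claim to the one-dimensional assumptions \eqref{assumption:ABounded} and \eqref{assumption:AVHCoercive} applied one tensor slot at a time. First I would record the norm decomposition: inserting $\sigma(\lambda_{\vec i}) = \sum_{l=1}^k \lambda_{\vec i_l}$ into the definition of $\|\cdot\|_{V_1^0(k)}$ gives
\[
\|v\|_{V_1^0(k)}^2 = \sum_{l=1}^k \sum_{\vec i \in \mathbb{N}^k} \lambda_{\vec i_l}\, \langle v, \varphi_{\vec i}\rangle_V^2 = \sum_{l=1}^k \|v\|_{W_l}^2, \qquad W_l := \bigotimes_{j=1}^k V^{\delta_{l,j}},
\]
so each $W_l$ carries $V$-regularity in the $l$-th slot and $H$-regularity in the remaining slots, and these are exactly the factors appearing in the intersection defining $V_1^0(k)$. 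Correspondingly, $A_k = \sum_{l=1}^k A^{(l)}$ with $A^{(l)} = \left(\bigotimes^{l-1} I\right)\otimes A \otimes \left(\bigotimes^{k-l} I\right)$, each $A^{(l)}$ mapping $W_l$ into its dual, and the pairing splits as $\langle A_k u, v\rangle_{V_1^0(k)} = \sum_{l=1}^k \langle A^{(l)} u, v\rangle$.

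The core step is to tensorize \eqref{assumption:ABounded} and \eqref{assumption:AVHCoercive} in a single direction. Fixing $l$ and reordering so the active slot is first, I would write $H(k) = H \otimes G$ with $G = \bigotimes_{j \neq l} H$ and expand in an orthonormal basis $\{e_m\}$ of $G$, say $u = \sum_m u_m \otimes e_m$ and $v = \sum_m v_m \otimes e_m$ with $u_m, v_m \in V$; by construction $\sum_m \|v_m\|_H^2 = \|v\|_{H(k)}^2$ and $\sum_m \|v_m\|_V^2 = \|v\|_{W_l}^2$. Since $A^{(l)} = A \otimes I_G$ leaves $G$ untouched, orthonormality annihilates the cross terms and $\langle A^{(l)} u, v\rangle = \sum_m \langle A u_m, v_m\rangle_V$. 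Applying \eqref{assumption:ABounded} termwise followed by Cauchy--Schwarz over $m$ yields $|\langle A^{(l)} u, v\rangle| \le \beta \|u\|_{W_l}\|v\|_{W_l}$, while applying \eqref{assumption:AVHCoercive} termwise yields $\langle A^{(l)} v, v\rangle + \lambda\|v\|_{H(k)}^2 \ge \gamma\|v\|_{W_l}^2$.

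Finally I would recombine over directions. For boundedness, summing the directional estimates and applying Cauchy--Schwarz to the index $l$ gives
\[
\left|\langle A_k u, v\rangle_{V_1^0(k)}\right| \le \beta \sum_{l=1}^k \|u\|_{W_l}\|v\|_{W_l} \le \beta \Big(\sum_{l=1}^k \|u\|_{W_l}^2\Big)^{1/2}\Big(\sum_{l=1}^k \|v\|_{W_l}^2\Big)^{1/2} = \beta \|u\|_{V_1^0(k)}\|v\|_{V_1^0(k)}.
\]
For coercivity, summing directly and using the norm decomposition produces the factor $k\lambda$:
\[
\langle A_k v, v\rangle_{V_1^0(k)} + k\lambda \|v\|_{H(k)}^2 = \sum_{l=1}^k\big(\langle A^{(l)} v, v\rangle + \lambda\|v\|_{H(k)}^2\big) \ge \gamma \sum_{l=1}^k \|v\|_{W_l}^2 = \gamma\|v\|_{V_1^0(k)}^2.
\]

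I expect the main obstacle to lie in the rigorous justification of the one-slot tensorization, not in the algebra of recombination: one must confirm that the tensor-product Gelfand-triple pairing factorizes across slots, so that $\langle A^{(l)} u, v\rangle = \sum_m \langle A u_m, v_m\rangle_V$ with an absolutely convergent sum, and that the basis expansion is legitimate for elements of the interpolation and dual spaces rather than merely for finite tensors. This is most safely handled by first establishing both estimates for finite linear combinations of the eigenfunctions $\varphi_{\vec i}$, where every sum is finite and every pairing unambiguous, and then extending to all of $V_1^0(k)$ by density together with the continuity of the bilinear forms.
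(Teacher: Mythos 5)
Your proposal is correct and follows essentially the same route as the paper's proof in Appendix~B: the paper likewise expands in the eigenbasis $\varphi_{\vec i}$, collects for each direction $l$ and each fixed complementary multi-index the fiber functions $g_{\vec i,l}, w_{\vec i,l}\in V$ (your $u_m, v_m$), applies \eqref{assumption:ABounded} and \eqref{assumption:AVHCoercive} slot-wise, and recombines with Cauchy--Schwarz over the joint index to obtain exactly the stated constants, including the factor $k\lambda$. The only cosmetic difference is that the paper performs a single Cauchy--Schwarz over the combined index $(l,\skipindex{i}{l})$ rather than nesting one over $m$ inside one over $l$, which yields the same bound.
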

\begin{proof}
The proof is provided in Appendix~\ref{appendix:ProofsAB}.
\end{proof}
While boundedness and coercivity are pivotal properties for showing existence and uniqueness of parabolic problems on its own, the result can even be strengthened in the case $\lambda = 0$, in which case the constants for boundedness and coercivity are independent of $k$.

Turning to the operator $B_k$, assume that $B$ is a symmetric and continuous bilinear mapping between the spaces $B: V^\alpha_1(2) \to V^{\alpha - 1 + \varepsilon}$ for fixed constants $\varepsilon \in [0, 1]$ and $\alpha \ge 0$. Let $c_{B}(\alpha, \varepsilon) > 0$ be a constant such that
\begin{align}
\refstepcounter{assumptionCounter}
\tag{A\theassumptionCounter}
\label{assumption:BBilinearMapping}
\| B v \|_{V^{\alpha - 1 + \varepsilon}} \le c_{B}(\alpha, \varepsilon) \|v \|_{V^\alpha_1(2)} \quad \text{for all } v \in V^\alpha_1(2).
\end{align}
It is noted that property \eqref{assumption:BBounded} implies \eqref{assumption:BBilinearMapping} for $\alpha = 0$ and $\varepsilon = 0$ (by using Young's inequality). However, the converse does not hold in general, i.e., \eqref{assumption:BBounded} cannot be deduced from \eqref{assumption:BBilinearMapping}, and, therefore, is a stronger condition on $B$. The importance of the distinction between these assumptions becomes evident in the subsequent discussion of operator examples.
The following result for $B_k$ was shown in \cite{Fursikov1989OnSystem,Fursikov1993MomentSide}.
\begin{lemma}
\label{lem:Bk}
    Let $B$ fulfill \eqref{assumption:BBilinearMapping} with constants $\alpha$ and $\varepsilon$. The operator $B_k: V^\alpha_1(k + 1) \to V^\alpha_{-1 + \varepsilon}(k)$ is well-defined and continuous. More specifically,
    \begin{align*}
        \| B_k v \|_{V_{-1 + \varepsilon}^\alpha(k)} \le \sqrt{2}c_{B}(\alpha, \varepsilon) k^{\varepsilon / 2} \| v \|_{V_{1}^\alpha(k + 1)} \quad\text{for all } v \in V_1^\alpha(k + 1),
    \end{align*}
    where $c_B(\alpha, \varepsilon)$ is the constant from \eqref{assumption:BBilinearMapping} and does not depend on $k$. 
\end{lemma}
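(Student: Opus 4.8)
The plan is to work in the eigenbasis and exploit the block structure of $B_k$ as a sum of single contractions. Writing $v = \sum_{\vec j \in \mathbb{N}^{k+1}} v_{\vec j}\,\varphi_{\vec j}$ with coefficients $v_{\vec j} = \langle v, \varphi_{\vec j}\rangle_V$, I decompose $B_k = \sum_{p=1}^k B^{(p)}$, where $B^{(p)}$ applies $B$ to the adjacent factors $p$ and $p+1$ and the identity to the remaining $k-1$ factors, consistent with the examples given for $B_2$. Testing $B^{(p)} v$ against an output basis function $\varphi_{\vec i}$, $\vec i \in \mathbb{N}^k$, the identity factors pair diagonally by orthonormality of $\{\varphi_i\}$, which forces $j_m = i_m$ for $m<p$ and $j_m = i_{m-1}$ for $m>p+1$, while the contracted pair $(j_p,j_{p+1})$ is summed against the single output index $i_p$ through the structure coefficients $b^{l}_{mn} := \langle B(\varphi_m\otimes\varphi_n),\varphi_l\rangle_V$. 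Thus the $\vec i$-coefficient of $B_k v$ is $\sum_{p=1}^k a_p(\vec i)$ with $a_p(\vec i) = \sum_{j_p,j_{p+1}} v_{\vec j}\, b^{i_p}_{j_p j_{p+1}}$, and the goal becomes to bound $\|B_k v\|_{V^\alpha_{-1+\varepsilon}(k)}^2 = \sum_{\vec i} \pi(\lambda_{\vec i})^\alpha \sigma(\lambda_{\vec i})^{-1+\varepsilon}\bigl(\sum_p a_p(\vec i)\bigr)^2$.

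The first key step is a \emph{fibrewise} version of \eqref{assumption:BBilinearMapping}. Freezing all output indices except $i_p$ (equivalently, all input indices except $j_p,j_{p+1}$), the map $(v_{\vec j})_{j_p,j_{p+1}} \mapsto (a_p(\vec i))_{i_p}$ is exactly the two-factor operator $B$ in coordinates, so \eqref{assumption:BBilinearMapping} gives $\sum_{i_p}\lambda_{i_p}^{\alpha-1+\varepsilon} a_p(\vec i)^2 \le c_B^2 \sum_{j_p,j_{p+1}} (\lambda_{j_p}\lambda_{j_{p+1}})^\alpha(\lambda_{j_p}+\lambda_{j_{p+1}}) v_{\vec j}^2$. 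Multiplying by the frozen product $\prod_{m\neq p}\lambda_{i_m}^\alpha = \pi(\lambda_{\vec i})^\alpha/\lambda_{i_p}^\alpha$ and summing over the frozen indices, and noting that the frozen product combines with $(\lambda_{j_p}\lambda_{j_{p+1}})^\alpha$ to give $\pi(\lambda_{\vec j})^\alpha$, I obtain the per-contraction estimate
\begin{align*}
\sum_{\vec i} \pi(\lambda_{\vec i})^\alpha\, \lambda_{i_p}^{-1+\varepsilon}\, a_p(\vec i)^2 \le c_B^2 \sum_{\vec j} \pi(\lambda_{\vec j})^\alpha\, (\lambda_{j_p}+\lambda_{j_{p+1}})\, v_{\vec j}^2.
\end{align*}
I expect the careful index bookkeeping behind this reduction --- matching the length-$(k+1)$ multi-index $\vec j$ to the length-$k$ multi-index $\vec i$ under the contraction at position $p$, verifying the factorisation of $\pi(\lambda_{\vec j})$, and justifying the diagonal pairing of the frozen factors --- to be the main technical obstacle.

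It then remains to assemble the contractions with the sharp $k$-dependence. Applying the Cauchy--Schwarz inequality $\bigl(\sum_p a_p\bigr)^2 \le \bigl(\sum_p c_p\bigr)\bigl(\sum_p c_p^{-1} a_p^2\bigr)$ pointwise in $\vec i$ with the weights $c_p = (\lambda_{i_p}/\sigma(\lambda_{\vec i}))^{1-\varepsilon}$, the target weight collapses exactly to the fibrewise one, since $\sigma(\lambda_{\vec i})^{-1+\varepsilon} c_p^{-1} = \lambda_{i_p}^{-1+\varepsilon}$; this is the choice that exploits the negative exponent $-1+\varepsilon$ and avoids a crude bound that would only yield $k^{1/2}$. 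The normalising factor is controlled by Hölder's inequality with exponents $\tfrac{1}{1-\varepsilon}$ and $\tfrac{1}{\varepsilon}$, giving $\sum_{p=1}^k c_p = \sigma(\lambda_{\vec i})^{-(1-\varepsilon)}\sum_p \lambda_{i_p}^{1-\varepsilon} \le k^{\varepsilon}$ uniformly in $\vec i$. Combining with the per-contraction estimate and summing over $p$, the source weights accumulate as $\sum_{p=1}^k (\lambda_{j_p}+\lambda_{j_{p+1}}) \le 2\,\sigma(\lambda_{\vec j})$, since each eigenvalue appears in at most two summands. This chain yields $\|B_k v\|_{V^\alpha_{-1+\varepsilon}(k)}^2 \le 2\, c_B^2\, k^{\varepsilon}\, \|v\|_{V^\alpha_1(k+1)}^2$, which is the claimed bound after taking square roots, and well-definedness and continuity follow immediately; positivity $\lambda_i>0$, guaranteed by $L$ being positive definite, ensures the weights $c_p$ are well defined.
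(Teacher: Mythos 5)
Your proposal is correct and follows essentially the same route as the paper's proof: expansion in the tensor eigenbasis, a fibrewise application of \eqref{assumption:BBilinearMapping} to the two contracted factors (the paper's $w_{\vec{j},l}$), a weighted Cauchy--Schwarz with weights proportional to $\lambda_{i_p}^{1-\varepsilon}$ combined with H\"older to extract the factor $k^{\varepsilon}$, and the final observation that $\sum_{p}(\lambda_{j_p}+\lambda_{j_{p+1}})\le 2\sigma(\lambda_{\vec j})$. The only (cosmetic) difference is that you normalise the Cauchy--Schwarz weights by $\sigma(\lambda_{\vec i})$ up front, and your intermediate squared estimates carry $c_B^2$ consistently, which is actually slightly cleaner than the paper's bookkeeping.
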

\begin{proof}
    The proof is provided in Appendix~\ref{appendix:ProofsAB} and is a generalization of the proof of Lemma 2.4 in \cite{Fursikov1989OnSystem} (case $\varepsilon = 0$) to the more general result for $\varepsilon \in [0, 1]$ stated in \cite{Fursikov1993MomentSide} (proof not provided therein).
\end{proof}

We can show similar bounds for $F_k$.
\begin{lemma}
    \label{lem:Fk}
    For any $\alpha \ge 0$, the operator $F_k(t): V^\alpha_1(k) \to V_{-1 + \varepsilon}^\alpha(k + 1)$ is bounded for $f(t) \in V^\alpha$. More specifically, it holds that
    \begin{align*}
        \|F_k(t) v\|_{V^\alpha_{-1 + \varepsilon}(k + 1)} \le c_F(\varepsilon) \|f(t)\|_{V^\alpha} k^{\varepsilon / 2} \|v\|_{V_1^\alpha(k)} \quad \text{for all } v \in V_1^\alpha(k),
    \end{align*}
    where the constant $c_F(\varepsilon)$ does not depend on $f$ or $k$.
\end{lemma}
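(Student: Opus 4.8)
The plan is to mirror the proof of Lemma~\ref{lem:Bk}: expand everything in the eigenbasis $\{\varphi_{\vec{j}}\}$ and estimate directly the weighted $\ell^2$-sum defining the $V_{-1+\varepsilon}^\alpha(k+1)$-norm. Writing $v = \sum_{\vec i \in \mathbb{N}^k} v_{\vec i}\varphi_{\vec i}$ and $f(t) = \sum_{m}f_m\varphi_m$ with $v_{\vec i}=\langle v,\varphi_{\vec i}\rangle$ and $f_m = (f(t),\varphi_m)_H$, the Kronecker-sum structure of $F_k(t)$ inserts $-f(t)$ into each slot, so that for a multi-index $\vec j\in\mathbb{N}^{k+1}$ one has $\langle F_k(t)v,\varphi_{\vec j}\rangle = -\sum_{p}f_{\vec j_p}\,v_{\vec j\setminus p}$, where $\vec j\setminus p\in\mathbb{N}^k$ denotes $\vec j$ with its $p$-th entry deleted and the sum runs over the insertion positions. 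Hence
\[
\|F_k(t)v\|_{V_{-1+\varepsilon}^\alpha(k+1)}^2 = \sum_{\vec j}\pi(\lambda_{\vec j})^\alpha\,\sigma(\lambda_{\vec j})^{-1+\varepsilon}\Big(\sum_p f_{\vec j_p}v_{\vec j\setminus p}\Big)^2,
\]
and the task reduces to bounding this expression by $\|f(t)\|_{V^\alpha}^2\,\|v\|_{V_1^\alpha(k)}^2$ up to the factor $k^\varepsilon$.

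First I would apply the weighted Cauchy--Schwarz inequality to the inner sum over $p$ with weights $\mu_p=\lambda_{\vec j_p}$, using that $\sum_p\lambda_{\vec j_p}=\sigma(\lambda_{\vec j})$. This converts the prefactor $\sigma(\lambda_{\vec j})^{-1+\varepsilon}$ into $\sigma(\lambda_{\vec j})^{\varepsilon}$ and yields
\[
\|F_k(t)v\|_{V_{-1+\varepsilon}^\alpha(k+1)}^2 \le \sum_{\vec j}\sum_p \pi(\lambda_{\vec j})^\alpha\,\sigma(\lambda_{\vec j})^{\varepsilon}\,\lambda_{\vec j_p}^{-1}f_{\vec j_p}^2\,v_{\vec j\setminus p}^2.
\]
Re-indexing each summand by $m=\vec j_p$ and $\vec i=\vec j\setminus p$, the summand becomes independent of $p$, so the $p$-sum contributes a factor $(k+1)$, while $\pi(\lambda_{\vec j})^\alpha=\lambda_m^\alpha\,\pi(\lambda_{\vec i})^\alpha$ and $\sigma(\lambda_{\vec j})=\lambda_m+\sigma(\lambda_{\vec i})$ split the weights between the $f$- and $v$-directions.

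The decisive step is to bound $(\lambda_m+\sigma(\lambda_{\vec i}))^\varepsilon \le \lambda_m^\varepsilon + \sigma(\lambda_{\vec i})^\varepsilon$ by subadditivity of $x\mapsto x^\varepsilon$ for $\varepsilon\in[0,1]$, which separates the estimate into an $f$-heavy contribution carrying $\lambda_m^{\alpha-1+\varepsilon}$ and a $v$-heavy contribution carrying $\sigma(\lambda_{\vec i})^{\varepsilon}$. To convert the resulting factor $(k+1)$ into the sharp $k^\varepsilon$, I would exploit that every $\vec i\in\mathbb{N}^k$ satisfies $\sigma(\lambda_{\vec i})=\sum_{l=1}^k\lambda_{\vec i_l}\ge k\lambda_\ast$, where $\lambda_\ast:=\inf_i\lambda_i>0$ is the smallest eigenvalue of $L$. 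Trading the excess $\sigma$-power against this lower bound, namely $\sum_{\vec i}\pi(\lambda_{\vec i})^\alpha\sigma(\lambda_{\vec i})^{\varepsilon}v_{\vec i}^2\le(k\lambda_\ast)^{\varepsilon-1}\|v\|_{V_1^\alpha(k)}^2$ and $\sum_{\vec i}\pi(\lambda_{\vec i})^\alpha v_{\vec i}^2\le(k\lambda_\ast)^{-1}\|v\|_{V_1^\alpha(k)}^2$, turns the prefactor $(k+1)$ into $(k+1)(k\lambda_\ast)^{\varepsilon-1}\sim k^\varepsilon$ for the $v$-heavy term and into $O(1)$ for the $f$-heavy one; the $f$-sums are in both cases controlled by $\|f(t)\|_{V^\alpha}^2$ after absorbing the remaining powers $\lambda_m^{-1}\le\lambda_\ast^{-1}$. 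Collecting constants gives the claim with $c_F(\varepsilon)$ proportional to $\lambda_\ast^{(\varepsilon-2)/2}$, which depends only on $\varepsilon$ and the fixed spectrum of $L$, and hence neither on $f$ nor on $k$.

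The main obstacle is obtaining the sharp rate $k^{\varepsilon/2}$ rather than the crude $k^{1/2}$: a naive Cauchy--Schwarz over the $O(k)$ insertion terms, or the coarser bound $(\lambda_m+\sigma(\lambda_{\vec i}))^\varepsilon\le\lambda_m+\sigma(\lambda_{\vec i})$, destroys the $\varepsilon$-dependence of the exponent. Keeping $\sigma(\lambda_{\vec i})^\varepsilon$ intact through subadditivity and then invoking the spectral lower bound $\sigma(\lambda_{\vec i})\ge k\lambda_\ast$ is precisely what recovers it, in direct analogy with how the factor $k^{\varepsilon/2}$ arises for $B_k$ in Lemma~\ref{lem:Bk}. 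I note finally that the exact number of insertion terms only affects the constant by a bounded factor and not the exponent, so the argument is insensitive to the precise indexing convention for $F_k$.
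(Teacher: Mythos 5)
Your argument is correct and follows essentially the same route as the paper's proof: expansion in the eigenbasis of $L$, a weighted Cauchy--Schwarz inequality over the insertion positions, and the spectral lower bound $\sigma(\lambda_{\vec{i}}) \ge k \lambda_{\min}$ to recover the rate $k^{\varepsilon/2}$, with the same resulting constant $c_F(\varepsilon) \sim \lambda_{\min}^{(\varepsilon - 2)/2}$. The only (harmless) tactical difference is that you place the weights $\lambda_{\vec{j}_p}^{\pm 1}$ in the Cauchy--Schwarz step and then split $(\lambda_m + \sigma(\lambda_{\vec{i}}))^{\varepsilon}$ by subadditivity, whereas the paper places the weights $\lambda_{\vec{j}_l}^{\pm\alpha}$ there and instead bounds $\sigma(\lambda_{\vec{j}})^{-1+\varepsilon} \le (k\lambda_{\min})^{-1+\varepsilon}$ directly before reinstating a factor of $\sigma(\lambda_{\vec{i}})$ at the end.
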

\begin{proof}
    The proof is provided in Appendix~\ref{appendix:ProofsAB}.
\end{proof}
% \begin{remark}
%     It is noted that the constants in the estimates from Lemmas~\ref{lem:Bk} and \ref{lem:Fk} depend on the coercivity constant $\gamma$ of $A_s$. As $\gamma$ grows, the continuity constants of $B_k$ and $F_k$ decrease. This means that the stability of the operator $A_s$ and, therefore, $A$ has an immediate effect on the norms of $B_k$ and $F_k$. As we will see later, smaller norms of $B_k$ and $F_k$ lead to a better convergence behavior of the Carleman linearization and to a larger radius of convergence. Such result aligns with the findings in \cite{Forets2017ExplicitLinearization,Amini2025CarlemanApproximations} for finite-dimensional systems.
% \end{remark}

\subsection{Lifted Linearization and Well-Posedness}
% Briefly discuss that \alpha > 0 might not be too great, thus we
% We could define the system in a nonstandard scalar product, however, we choose to transform the system first
% Transform the original truncated linearization

Intuitively, we would like to consider system~\eqref{eq:TruncatedCarleman} in the space $U_1^0(N)$, meaning we seek a solution $y_N \in U_1^0(N)$ while the equality is to be considered in its dual space $U_{-1}^0(N)$. However, if assumption~\eqref{assumption:BBilinearMapping} does not hold for $\alpha = 0$ but for some $\alpha > 0$, Lemma~\ref{lem:Bk} does not allow us to infer well-posedness of the linearization through Lemma~\ref{lem:PerturbationLinear} in the classical weak formulation. One way to address this issue is to consider the equation directly in $U_1^\alpha(N)$ and $U_{-1}^\alpha(N)$ instead, as has been done in \cite{Fursikov1989OnSystem,Fursikov1993MomentSide} in the case of self-adjoint positive $A$ and $f = 0$. Here, we choose a different approach: We first transform the original linearization into an alternative dynamical system, which restricts the solution to higher regularity. This, ultimately, enables us to apply Lemma~\ref{lem:PerturbationLinear}.
% and is linked to the notion of so-called strong variational solutions

First, define the operator $A_\lambda := A + \lambda I$ where $I$ denotes the identity operator and $\lambda$ the constant from \eqref{assumption:AVHCoercive}. Then, $A_\lambda$ is $V$-$H$ coercive with the same constant $\gamma$ as $A$ and $\lambda = 0$. Since $A_\lambda$ is a positive operator, its fractional powers are well-defined (see, e.g., Section~2.6 in \cite{Pazy1983SemigroupsEquations}), and we can define $\bar{A}_{\lambda, k}^\xi := \bigotimes^k A_{\lambda}^\xi$ for any $\xi \in \mathbb{R}$. Furthermore, $A_\lambda^\xi$ commutes with $A$ and, consequently, $\bar{A}^\xi_{\lambda, k}$ commutes with $A_k$.

Instead of seeking a solution $y_N$ to \eqref{eq:TruncatedCarleman}, we seek a solution $\tilde{y}_N(t) = \mathcal{D}_{\lambda, N}^{\alpha} y_N(t)$ with $\mathcal{D}_{\lambda, N}^\xi := \operatorname{diag}(\bar{A}^{\xi/2}_{\lambda, 1}, \dots, \bar{A}^{\xi/2}_{\lambda, N})$ for $\xi \in \mathbb{R}$, where we are specifically interested in the case $\alpha \in [0, \bar{\alpha} - 1]$. Plugging this into the original linearization leads to the dynamical system
\begin{align}
\begin{aligned}
    \tilde{y}'_N(t) &= \underbrace{\mathcal{D}_{\lambda, N}^{\alpha} \mathcal{A}_N(t) \mathcal{D}_{\lambda, N}^{-\alpha}}_{\widetilde{\mathcal{A}}_N(t)} \tilde{y}_N(t) + \underbrace{\mathcal{D}_{\lambda, N}^{\alpha} f_N(t)}_{\tilde{f}_N(t)}, \\
    \tilde{y}_N(0) &= \underbrace{\mathcal{D}_{\lambda, N}^{\alpha} y_{N, 0}}_{\tilde{y}_{N, 0}}.
\end{aligned}
\tag{$\text{TC}_\alpha$}
\label{eq:TransformedCarleman}
\end{align}
The block operator matrix $\widetilde{\mathcal{A}}_N$, the forcing $\tilde{f}_N$, and the initial condition have a similar form as in \eqref{eq:TruncatedCarleman}:
\begin{gather*}
    \widetilde{\mathcal{A}}_N(t) = \begin{pmatrix}
        A_1 & \widetilde{B}_1 & 0 & \cdots & 0 \\
        \widetilde{F}_2(t) & A_2 & \widetilde{B}_2 & \ddots & \vdots \\
        0 & \ddots & \ddots & \ddots & 0 \\
        \vdots & \ddots & \widetilde{F}_{N - 1}(t) & A_{N - 1} & \widetilde{B}_{N - 1} \\
        0 & \cdots & 0 & \widetilde{F}_N(t) & A_N
    \end{pmatrix}, \\
    \tilde{f}_N(t) = \begin{pmatrix} A_\lambda^{\alpha/2} f(t) & 0 & \cdots & 0 \end{pmatrix}^T, \quad
    \tilde{y}_{N, 0} = \begin{pmatrix}
        \bigotimes^1 A_\lambda^{\alpha/2} y_0 &
        \cdots &
        \bigotimes^N A_\lambda^{\alpha/2} y_0
    \end{pmatrix}^T.
\end{gather*}
The operators $\widetilde{B}_k$ and $\widetilde{F}_k(t)$ define the corresponding Kronecker sums of the operators $\widetilde{B} := A_\lambda^{\alpha/2} B (A_\lambda^{-\alpha/2} \otimes A_\lambda^{-\alpha/2})$ and $\tilde{f}(t) = A_\lambda^{\alpha/2} f(t)$ respectively. It is noted that the operators $A_k$ remain unchanged under this transformation as the operator $A_k$ commutes with the operators pre- and post-applied.
The operator $\widetilde{B}$ fulfills \eqref{assumption:BBilinearMapping} for $\alpha = 0$ with an adjusted constant, as can be seen by the estimate
\begin{align}
    \| \widetilde{B}\|_{\mathcal{L}(V_0^1(2), V^{-1 + \varepsilon})} &= \| A_\lambda^{\alpha/2} B (A_\lambda^{-\alpha/2} \otimes A_\lambda^{-\alpha/2})\|_{\mathcal{L}(V_0^1(2), V^{-1 + \varepsilon})} \nonumber \\
    &\le \| A^{\alpha/2}_\lambda\|_{\mathcal{L}(V^{\alpha - 1 + \varepsilon}, V^{-1 + \varepsilon})} \|A_\lambda^{-\alpha/2} \otimes A_\lambda^{-\alpha/2}\|_{\mathcal{L}(V_1^0(2), V_1^\alpha(2))} \| B \|_{\mathcal{L}(V_1^\alpha(2), V^{\alpha - 1 + \varepsilon})} \nonumber \\
    &\le c_{\lambda,\alpha} c_B(\alpha, \varepsilon), \label{eq:BLiftedConstant}
\end{align}
where $\alpha \in [0, \bar{\alpha} - 1]$ and the constant $c_{\lambda,\alpha}$ only depends on $A$, $L$, $\lambda$, and $\alpha$. If $\alpha = 0$, then $c_{\lambda,\alpha} = 1$.
This allows us to apply Lemma~\ref{lem:Bk} to $\widetilde{B}_k$, i.e., that $\widetilde{B}_k: V_1^0(k + 1) \to V_{-1 + \varepsilon}^0(k)$ is continuous. Moreover, we can use Lemma~\ref{lem:Fk} to show that $\widetilde{F}_k(t): V_1^0(k - 1) \to V_{-1 + \varepsilon}^0(k)$ is continuous provided that $f(t) \in V^\alpha$, which implies $A^{\alpha / 2}_\lambda f(t) \in H$. Although Lemmas~\ref{lem:Bk} and \ref{lem:Fk} are technically applied here only for the case $\alpha = 0$---specifically, for the transformed operator $\widetilde{B}$ and the transformed forcing, with constants adjusted to account for the lifting---we presented these results in the previous section for arbitrary $\alpha \geq 0$. This broader presentation highlights that the approach of \cite{Fursikov1989OnSystem,Fursikov1993MomentSide} can be readily adapted to our framework.

The operator $\widetilde{\mathcal{A}}_N(t)$ can be viewed as a weak perturbation of a $U_1^0(N)$-$Y(N)$ coercive operator. For that, we decompose the operator into
\begin{align*}
    \widetilde{\mathcal{A}}_N(t) = \mathcal{A}_{N, \text{diag}} + \widetilde{\mathcal{B}}_N + \widetilde{\mathcal{F}}_N(t),
\end{align*}
where $\mathcal{A}_{N, \text{diag}}$ is the diagonal part, $\widetilde{\mathcal{B}}_N$ the upper diagonal part, and $\widetilde{\mathcal{F}}_N(t)$ the lower diagonal part. The following propositions establish the foundation for demonstrating the weak perturbation property.
\begin{proposition}
\label{prop:AN}
Let $A$ fulfill assumptions \eqref{assumption:ABounded} and \eqref{assumption:AVHCoercive} with constants $\beta$, $\gamma$, and $\lambda$. Then, $\mathcal{A}_{N, \operatorname{diag}}$ is $U_1^0(N)$-$Y(N)$ coercive with the constants
\begin{align*}
\langle \mathcal{A}_{N, \operatorname{diag}} u, v \rangle_{U_1^0(N)} &\le \beta \|u\|_{U_1^0(N)} \|v\|_{U_1^0(N)},  \\
\langle \mathcal{A}_{N, \operatorname{diag}} v, v\rangle_{U_1^0(N)} + N \lambda \| v \|^2_{Y(N)} &\ge \gamma \| v \|_{U_1^0(N)}^2
\end{align*}
for all $u, v \in U_1^0(N)$.
\end{proposition}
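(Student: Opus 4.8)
The plan is to exploit the block-diagonal structure of $\mathcal{A}_{N, \operatorname{diag}} = \operatorname{diag}(A_1, \dots, A_N)$. Writing $u = (u_1, \dots, u_N)$ and $v = (v_1, \dots, v_N)$ as elements of $U_1^0(N) = \bigoplus_{k=1}^N V_1^0(k)$, the operator acts componentwise, so by the definition of the duality pairing on the direct sum the pairing decomposes as
\begin{align*}
\langle \mathcal{A}_{N, \operatorname{diag}} u, v \rangle_{U_1^0(N)} = \sum_{k=1}^N \langle A_k u_k, v_k \rangle_{V_1^0(k)}.
\end{align*}
This reduces the claim to the per-block estimates already established in Lemma~\ref{lem:Ak}, which is the only nontrivial ingredient.

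For boundedness I would apply the first inequality of Lemma~\ref{lem:Ak} to each summand and then use the discrete Cauchy--Schwarz inequality over the index $k$:
\begin{align*}
\sum_{k=1}^N \langle A_k u_k, v_k \rangle_{V_1^0(k)} \le \beta \sum_{k=1}^N \|u_k\|_{V_1^0(k)} \|v_k\|_{V_1^0(k)} \le \beta \|u\|_{U_1^0(N)} \|v\|_{U_1^0(N)},
\end{align*}
where the last step uses $\|u\|_{U_1^0(N)}^2 = \sum_{k=1}^N \|u_k\|_{V_1^0(k)}^2$ and likewise for $v$. This delivers the claimed boundedness constant $\beta$, inherited unchanged from $A$.

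For coercivity I would apply the second inequality of Lemma~\ref{lem:Ak} block-wise, which produces the $k$-dependent lower-order term $k\lambda\|v_k\|_{H(k)}^2$:
\begin{align*}
\langle \mathcal{A}_{N, \operatorname{diag}} v, v \rangle_{U_1^0(N)} = \sum_{k=1}^N \langle A_k v_k, v_k \rangle_{V_1^0(k)} \ge \gamma \sum_{k=1}^N \|v_k\|_{V_1^0(k)}^2 - \lambda \sum_{k=1}^N k \|v_k\|_{H(k)}^2.
\end{align*}
The only point requiring care — and really the sole substantive step — is to convert this into a single coercivity estimate with an $N$-uniform lower-order coefficient. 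Since $\lambda \ge 0$ and $k \le N$ for every block, I would bound $\sum_{k=1}^N k\|v_k\|_{H(k)}^2 \le N \sum_{k=1}^N \|v_k\|_{H(k)}^2 = N\|v\|_{Y(N)}^2$, so that after recognizing $\sum_{k=1}^N \|v_k\|_{V_1^0(k)}^2 = \|v\|_{U_1^0(N)}^2$ and rearranging I obtain exactly
\begin{align*}
\langle \mathcal{A}_{N, \operatorname{diag}} v, v \rangle_{U_1^0(N)} + N\lambda\|v\|_{Y(N)}^2 \ge \gamma\|v\|_{U_1^0(N)}^2.
\end{align*}
I do not anticipate any genuine obstacle; the proposition is a direct assembly of the block bounds from Lemma~\ref{lem:Ak}, and the only conceptual content is that the worst block ($k = N$) dictates the coercivity defect. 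This is precisely the mechanism that makes the coercivity and boundedness constants independent of $N$ in the stable case $\lambda = 0$, which the remark following Lemma~\ref{lem:Ak} anticipates.
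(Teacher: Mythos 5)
Your proposal is correct and follows essentially the same route as the paper's proof: decompose the pairing block-wise over the direct sum, apply Lemma~\ref{lem:Ak} to each block, use discrete Cauchy--Schwarz for the boundedness, and bound $k \le N$ to absorb the $k$-dependent lower-order terms into $N\lambda\|v\|_{Y(N)}^2$ for the coercivity. No gaps.
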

\begin{proof}
Let $u = (u_1, \dots, u_N) \in U_1^0(N)$ and $v = (v_1, \dots, v_N) \in U_1^0(N)$ be arbitrary but fixed. Using Lemma~\ref{lem:Ak}, we can show that
\begin{align*}
    \langle \mathcal{A}_{N, \text{diag}} u, v\rangle_{U_1^0(N)} &= \sum_{k = 1}^N \langle A_k u_k, v_k\rangle_{V_1^0(k)} \le \sum_{k = 1}^N \beta \| u_k \|_{V_1^0(k)} \| v_k \|_{V_1^0(k)} \\
    &\le \beta \left(\sum_{k = 1}^N \| u_k \|_{V_1^0(k)}^2\right)^{\frac{1}{2}} \left(\sum_{k = 1}^N \| v_k \|_{V_1^0(k)}^2\right)^{\frac{1}{2}} = \beta \|u\|_{U_1^0(N)}\|v\|_{U_1^0(N)}.
\end{align*}
The coercivity property can be shown as follows:
\begin{align*}
    \langle \mathcal{A}_{N, \text{diag}} v, v\rangle_{U_1^0(N)} &= \sum_{k = 1}^N \langle A_k v_k, v_k\rangle_{V_1^0(k)} \ge \sum_{k = 1}^N \left( \gamma \|v_k \|_{V_1^0(k)}^2 - k \lambda \|v_k\|_{H(k)}^2 \right) \\
    &\ge \gamma \| v \|_{U_1^0(N)}^2 - N \lambda \|v\|_{Y(N)}^2. \qedhere
\end{align*}
\end{proof}
\begin{proposition}
\label{prop:BN}
Let $B$ fulfill assumption \eqref{assumption:BBilinearMapping} for some $\alpha \in [0, \bar{\alpha} - 1]$ and $\varepsilon \in [0, 1]$. Then, $\widetilde{\mathcal{B}}_N: U_1^0(N) \to U_{-1 + \varepsilon}^0(N)$ is continuous with
\begin{align*}
    \| \widetilde{\mathcal{B}}_N v\|_{U_{-1 + \varepsilon}^0(N)} \le c_{\mathcal{B}}(\alpha, \varepsilon) N^{\varepsilon / 2} \| v \|_{U_1^0(N)} \quad\text{for all } v \in U_1^0(N),
\end{align*}
where the constant $c_{\mathcal{B}}(\alpha, \varepsilon)$ can be bounded in terms of $c_B(\alpha, \varepsilon)$ and does not depend on $N$.
\end{proposition}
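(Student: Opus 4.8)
The plan is to exploit the purely block-superdiagonal structure of $\widetilde{\mathcal{B}}_N$ and reduce everything to the single-block estimate of Lemma~\ref{lem:Bk}. First I would record that for $v = (v_1, \dots, v_N) \in U_1^0(N)$ the action of the upper-diagonal part is
\begin{align*}
    \widetilde{\mathcal{B}}_N v = (\widetilde{B}_1 v_2, \, \widetilde{B}_2 v_3, \, \dots, \, \widetilde{B}_{N-1} v_N, \, 0),
\end{align*}
since the block $\widetilde{B}_k$ sits in position $(k, k+1)$. By the definition of the direct-sum norm on $U_{-1+\varepsilon}^0(N)$ this gives
\begin{align*}
    \| \widetilde{\mathcal{B}}_N v \|_{U_{-1+\varepsilon}^0(N)}^2 = \sum_{k=1}^{N-1} \| \widetilde{B}_k v_{k+1} \|_{V_{-1+\varepsilon}^0(k)}^2,
\end{align*}
so the problem decouples into $N-1$ independent single-block estimates.

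Next I would apply Lemma~\ref{lem:Bk} to each block $\widetilde{B}_k$. The crucial point is that the transformed bilinear map $\widetilde{B}$ satisfies \eqref{assumption:BBilinearMapping} at the index $\alpha = 0$, with the adjusted, $N$-independent constant $c_{\lambda,\alpha} c_B(\alpha,\varepsilon)$ established in \eqref{eq:BLiftedConstant}. Hence Lemma~\ref{lem:Bk}, specialized to $\alpha = 0$, yields
\begin{align*}
    \| \widetilde{B}_k v_{k+1} \|_{V_{-1+\varepsilon}^0(k)} \le \sqrt{2}\, c_{\lambda,\alpha}\, c_B(\alpha,\varepsilon)\, k^{\varepsilon/2} \| v_{k+1} \|_{V_1^0(k+1)},
\end{align*}
with a constant independent of $k$.

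The final step is a uniform bound on the growth factor followed by an index shift. Since $\varepsilon \ge 0$ and $k \le N$ for every summand, we have $k^\varepsilon \le N^\varepsilon$, so after squaring, summing, and relabeling $j = k+1$,
\begin{align*}
    \| \widetilde{\mathcal{B}}_N v \|_{U_{-1+\varepsilon}^0(N)}^2 \le 2\, c_{\lambda,\alpha}^2\, c_B(\alpha,\varepsilon)^2\, N^\varepsilon \sum_{j=2}^{N} \| v_j \|_{V_1^0(j)}^2 \le 2\, c_{\lambda,\alpha}^2\, c_B(\alpha,\varepsilon)^2\, N^\varepsilon \| v \|_{U_1^0(N)}^2.
\end{align*}
Taking square roots delivers the claim with $c_{\mathcal{B}}(\alpha,\varepsilon) = \sqrt{2}\, c_{\lambda,\alpha}\, c_B(\alpha,\varepsilon)$, which depends only on $A$, $L$, $\lambda$, $\alpha$, and $c_B(\alpha,\varepsilon)$, but not on $N$.

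I do not expect a genuine obstacle here; the argument is essentially bookkeeping once Lemma~\ref{lem:Bk} is available. The one point that requires care is the origin of the factor $N^{\varepsilon/2}$: it comes \emph{solely} from replacing the per-block growth $k^{\varepsilon/2}$ by the uniform bound $N^{\varepsilon/2}$, and must not be mistaken for an $N$-dependence of the constant itself. The constant remains bounded precisely because $\widetilde{B}$ inherits \eqref{assumption:BBilinearMapping} with an $N$-independent constant through \eqref{eq:BLiftedConstant}, so the clean separation of the $N$-growth from the structural constant is what makes the estimate useful for the subsequent well-posedness argument.
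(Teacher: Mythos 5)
Your proposal is correct and follows essentially the same route as the paper: decompose $\widetilde{\mathcal{B}}_N$ into its superdiagonal blocks, apply Lemma~\ref{lem:Bk} to $\widetilde{B}_k$ using the adjusted constant from \eqref{eq:BLiftedConstant}, bound $k^{\varepsilon}$ by $N^{\varepsilon}$, and sum, arriving at the same constant $c_{\mathcal{B}}(\alpha,\varepsilon) = \sqrt{2}\,c_{\lambda,\alpha}\,c_B(\alpha,\varepsilon)$. No gaps.
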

\begin{proof}
Let $v = (v_1, \dots, v_N) \in U_1^0(N)$ be arbitrary but fixed. Because of Lemma~\ref{lem:Bk} and the estimate \eqref{eq:BLiftedConstant}, it holds that
\begin{align*}
\| \widetilde{\mathcal{B}}_N v \|^2_{U_{-1 + \varepsilon}^0(N)} &= \sum_{k = 1}^{N - 1} \| \widetilde{B}_k v_{k + 1} \|^2_{V_{-1 + \varepsilon}^0(k)} \le 2 c_{\lambda,\alpha}^{2} c_B(\alpha, \varepsilon)^2 \sum_{k = 1}^{N - 1} k^{\varepsilon} \|v_{k + 1} \|_{V_1^0(k + 1)}^2 \\
&\le 2 c_{\lambda,\alpha}^{2} c_B(\alpha, \varepsilon)^2 N^{\varepsilon} \|v\|_{U_1^0(N)}^2.
\end{align*}
This implies the result with $c_{\mathcal{B}}(\alpha, \varepsilon) = \sqrt{2} c_{\lambda,\alpha} c_B(\alpha, \varepsilon)$.
\end{proof}
\begin{proposition}
\label{prop:FN}
Let $f(t) \in V^\alpha$ for some $\alpha \in [0, \bar{\alpha} - 1]$ and $\varepsilon \in [0, 1]$. Then, $\mathcal{\widetilde{F}}_N(t): U_1^0(N) \to U_{-1 + \varepsilon}^0(N)$ is continuous with
\begin{align*}
    \| \widetilde{\mathcal{F}}_N(t) v\|_{U_{-1 + \varepsilon}^0(N)} \le c_{\mathcal{F}}(\alpha, \varepsilon) \|f(t)\|_{V^\alpha} N^{\varepsilon / 2} \| v \|_{U_1^0(N)} \quad\text{for all } v \in U_1^0(N),
\end{align*}
where the constant $c_{\mathcal{F}}(\alpha, \varepsilon)$ does not depend on $f$ or $N$.
\end{proposition}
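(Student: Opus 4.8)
The plan is to mirror the argument for Proposition~\ref{prop:BN} verbatim, exploiting the fact that $\widetilde{\mathcal{F}}_N(t)$ is block-subdiagonal and therefore decouples into the individual actions of the operators $\widetilde{F}_k(t)$. First I would fix $v = (v_1, \dots, v_N) \in U_1^0(N)$ and record that, by the lower-diagonal structure of $\widetilde{\mathcal{A}}_N(t)$, the operator $\widetilde{F}_k(t)$ sits in row $k$, column $k-1$, so the $k$-th block of $\widetilde{\mathcal{F}}_N(t) v$ equals $\widetilde{F}_k(t) v_{k-1}$ for $k = 2, \dots, N$ (and vanishes for $k = 1$). Hence the squared $U_{-1+\varepsilon}^0(N)$-norm splits as $\| \widetilde{\mathcal{F}}_N(t) v \|_{U_{-1+\varepsilon}^0(N)}^2 = \sum_{k=2}^N \| \widetilde{F}_k(t) v_{k-1} \|_{V_{-1+\varepsilon}^0(k)}^2$.

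The next step is to bound each summand using Lemma~\ref{lem:Fk}. Since $\widetilde{F}_k(t)$ is the Kronecker sum associated with the transformed forcing $\tilde{f}(t) = A_\lambda^{\alpha/2} f(t)$, I apply the lemma in the case $\alpha = 0$ at tensor order $k-1$, which gives $\| \widetilde{F}_k(t) v_{k-1} \|_{V_{-1+\varepsilon}^0(k)} \le c_F(\varepsilon) \| \tilde{f}(t) \|_H (k-1)^{\varepsilon/2} \| v_{k-1} \|_{V_1^0(k-1)}$. The only genuinely new ingredient compared with Proposition~\ref{prop:BN} is to re-express $\| \tilde{f}(t) \|_H$ in terms of $\| f(t) \|_{V^\alpha}$: for $\alpha \in [0, \bar{\alpha}-1]$ the operator $A_\lambda^{\alpha/2}$ maps $V^\alpha$ boundedly into $H$, so that $\| \tilde{f}(t) \|_H = \| A_\lambda^{\alpha/2} f(t) \|_H \le c_{\lambda,\alpha} \| f(t) \|_{V^\alpha}$, with the very same constant $c_{\lambda,\alpha}$ that appears in \eqref{eq:BLiftedConstant}.

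Finally I would sum the estimates, crudely bounding $(k-1)^\varepsilon \le N^\varepsilon$ uniformly in $k$ and pulling it outside the sum, so that $\sum_{k=2}^N (k-1)^\varepsilon \| v_{k-1} \|_{V_1^0(k-1)}^2 \le N^\varepsilon \| v \|_{U_1^0(N)}^2$. Combining the two previous displays and taking square roots yields the claimed inequality with $c_{\mathcal{F}}(\alpha, \varepsilon) = c_F(\varepsilon)\, c_{\lambda,\alpha}$, which by construction depends on neither $f$ nor $N$.

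I do not anticipate a serious obstacle; the argument is routine once Lemma~\ref{lem:Fk} is in hand, and it is structurally identical to the proof of Proposition~\ref{prop:BN}. The only points requiring a little care are the bookkeeping of the tensor-order index along the subdiagonal (ensuring the growth factor is $(k-1)^{\varepsilon/2}$ rather than $k^{\varepsilon/2}$, a distinction that is harmless after the uniform bound by $N^{\varepsilon/2}$) and invoking the boundedness of $A_\lambda^{\alpha/2} \colon V^\alpha \to H$, which is exactly the property underlying $c_{\lambda,\alpha}$ and is consistent with the range $\alpha \in [0, \bar{\alpha}-1]$ used throughout the transformed setting.
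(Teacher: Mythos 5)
Your argument is correct and is essentially the paper's own proof: split the subdiagonal blockwise, apply Lemma~\ref{lem:Fk} with $\alpha=0$ to the transformed forcing $A_\lambda^{\alpha/2}f(t)$, bound $\|A_\lambda^{\alpha/2}f(t)\|_H$ by a constant times $\|f(t)\|_{V^\alpha}$, and absorb the tensor-order growth factor into $N^{\varepsilon/2}$. The only cosmetic difference is that the paper names the lifting constant $\tilde{c}_{\lambda,\alpha}=\|A_\lambda^{\alpha/2}L^{-\alpha/2}\|_{\mathcal{L}(H,H)}$ rather than reusing $c_{\lambda,\alpha}$ from \eqref{eq:BLiftedConstant}, and the $(k-1)^{\varepsilon/2}$ versus $k^{\varepsilon/2}$ bookkeeping is, as you note, harmless.
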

\begin{proof}
Let $v = (v_1, \dots, v_N) \in U_1^0(N)$ be arbitrary but fixed. With Lemma~\ref{lem:Fk}, we can show that
\begin{align*}
\| \widetilde{\mathcal{F}}_N(t) v \|^2_{U_{-1 + \varepsilon}^0(N)} &= \sum_{k = 2}^{N} \| \widetilde{F}_k(t) v_{k - 1} \|^2_{V_{-1 + \varepsilon}^0(k)} \le c_F(\varepsilon)^2 \|A_\lambda^{\alpha/2} f(t)\|_{H}^2 \sum_{k = 2}^{N} k^{\varepsilon} \|v_{k - 1} \|_{V_1^0(k - 1)}^2 \\
&\le \tilde{c}_{\lambda, \alpha}^{2} c_F(\varepsilon)^2 \|f(t)\|_{V^\alpha}^2 N^{\varepsilon} \|v\|_{U_1^0(N)}^2,
\end{align*}
where $\tilde{c}_{\lambda, \alpha} = \|A_\lambda^{\alpha/2} L^{-\alpha/2}\|_{\mathcal{L}(H, H)}$. This implies the result with $c_{\mathcal{F}}(\alpha, \varepsilon) = \tilde{c}_{\lambda,\alpha} c_F(\varepsilon)$
\end{proof}

The following lemma lets us identify solutions $\tilde{y}$ of \eqref{eq:TransformedCarleman} as solutions to \eqref{eq:TruncatedCarleman}.
\begin{lemma}
\label{lem:Isomorphism}
The linear mapping $\mathcal{D}_{\lambda, N}^{\xi}: W(0, T; U_1^\xi(N), U_{-1}^\xi(N)) \to W(0, T; U_1^0(N), U_{-1}^0(N))$ with $w \mapsto (t \mapsto \mathcal{D}_{\lambda, N}^{\xi} w(t))$
is an isomorphism with the inverse $\mathcal{D}_{\lambda, N}^{-\xi}$ for any $\xi \in \mathbb{R}$.
\end{lemma}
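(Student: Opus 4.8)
The plan is to factor the statement into a purely spatial claim and a soft Bochner-space argument. Since $\mathcal{D}_{\lambda,N}^{\xi}$ is time-independent and acts pointwise in $t$, it suffices to show that, as a spatial operator, $\mathcal{D}_{\lambda,N}^{\xi}$ is a bounded isomorphism $U_1^\xi(N) \to U_1^0(N)$ as well as $U_{-1}^\xi(N) \to U_{-1}^0(N)$, with inverse $\mathcal{D}_{\lambda,N}^{-\xi}$ in both cases. Granting this, for $w \in W(0,T;U_1^\xi(N),U_{-1}^\xi(N))$ one has $\mathcal{D}_{\lambda,N}^{\xi} w \in L^2(0,T;U_1^0(N))$ by boundedness on the first space, and testing against $\phi \in C_c^\infty(0,T)$ together with the fact that a fixed bounded operator commutes with the Bochner integral yields $(\mathcal{D}_{\lambda,N}^{\xi} w)' = \mathcal{D}_{\lambda,N}^{\xi} w' \in L^2(0,T;U_{-1}^0(N))$; hence $\mathcal{D}_{\lambda,N}^{\xi}$ maps $W(0,T;U_1^\xi(N),U_{-1}^\xi(N))$ boundedly into $W(0,T;U_1^0(N),U_{-1}^0(N))$. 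Running the same argument for $\mathcal{D}_{\lambda,N}^{-\xi}$ and using $\mathcal{D}_{\lambda,N}^{\xi}\mathcal{D}_{\lambda,N}^{-\xi} = \mathcal{D}_{\lambda,N}^{0} = I$ (the fractional powers of the positive operator $A_\lambda$ satisfying $A_\lambda^{\xi/2}A_\lambda^{-\xi/2} = I$ on each tensor factor) gives the claimed isomorphism. By replacing $\xi$ with $-\xi$ we may assume $\xi \ge 0$ throughout.

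The spatial isomorphism is block diagonal, so it reduces to showing that each block $\bar{A}_{\lambda,k}^{\xi/2} = \bigotimes^k A_\lambda^{\xi/2}$ is an isomorphism $V_1^\xi(k) \to V_1^0(k)$ and $V_{-1}^\xi(k) \to V_{-1}^0(k)$ with inverse $\bar{A}_{\lambda,k}^{-\xi/2}$. I would first treat the model operator built from $L$ rather than $A_\lambda$: since $L\varphi_i = \lambda_i \varphi_i$, the operator $\bigotimes^k L^{\xi/2}$ is self-adjoint and diagonal in the basis $\{\varphi_{\vec{i}}\}$ with eigenvalue $\pi(\lambda_{\vec{i}})^{\xi/2}$, so reading off the definition $\|v\|_{V_q^\alpha(k)}^2 = \sum_{\vec{i}} \pi(\lambda_{\vec{i}})^\alpha \sigma(\lambda_{\vec{i}})^q \langle v,\varphi_{\vec{i}}\rangle_V^2$ immediately gives $\|\bigotimes^k L^{\xi/2} v\|_{V_q^0(k)} = \|v\|_{V_q^\xi(k)}$ for $q \in \{-1,1\}$; that is, $\bigotimes^k L^{\xi/2}$ is an isometric isomorphism $V_q^\xi(k) \to V_q^0(k)$ with inverse $\bigotimes^k L^{-\xi/2}$.

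To transfer this to $A_\lambda$, I would write $\bar{A}_{\lambda,k}^{\xi/2} = \bigl(\bigotimes^k A_\lambda^{\xi/2} L^{-\xi/2}\bigr)\circ\bigl(\bigotimes^k L^{\xi/2}\bigr)$ and argue that $\bigotimes^k(A_\lambda^{\xi/2}L^{-\xi/2})$ is a bounded isomorphism of $V_q^0(k)$. The domain identity $D(A_\lambda^{s/2}) = D(A^{s/2}) = D(L^{s/2})$ together with interpolation yields the norm equivalence $\|A_\lambda^{s/2}\cdot\|_H \simeq \|L^{s/2}\cdot\|_H = \|\cdot\|_{V^s}$ on the relevant scale, so that $A_\lambda^{\xi/2}L^{-\xi/2}$ and its inverse $L^{\xi/2}A_\lambda^{-\xi/2}$ are isomorphisms of $H$; this is exactly the type of constant $\tilde c_{\lambda,\alpha}$, $c_{\lambda,\alpha}$ already encountered in Proposition~\ref{prop:FN} and in \eqref{eq:BLiftedConstant}. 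Combining the isometry for $L$ with the boundedness of the correction factor then gives boundedness of $\bar{A}_{\lambda,k}^{\xi/2}$ and of $\bar{A}_{\lambda,k}^{-\xi/2}$, and their product is the identity; summing over the blocks $k=1,\dots,N$ delivers the spatial isomorphism, and hence the lemma.

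The main obstacle is this last transfer step: the weighted intersection spaces $V_q^0(k)$ mix the product weight $\pi$ and the sum weight $\sigma$, and $A_\lambda$ is not self-adjoint and need not be diagonal in $\{\varphi_{\vec{i}}\}$, so the $H$-level equivalence $\|A_\lambda^{s/2}\cdot\|_H \simeq \|L^{s/2}\cdot\|_H$ does not by itself produce boundedness of $\bigotimes^k(A_\lambda^{\xi/2}L^{-\xi/2})$ on $V_q^0(k)$. Making this precise requires showing the correction factor respects the $V^1$-scale in each tensor direction, so that it lifts to the summed, weighted norm, and handling the negative regularity index $q=-1$ by a duality argument; for $\xi$ outside the admissible equivalence range one further iterates using the group property $A_\lambda^{\xi/2}A_\lambda^{\eta/2} = A_\lambda^{(\xi+\eta)/2}$ to reduce to steps within the scale. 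The bookkeeping of these equivalences through the Kronecker/tensor structure is where the real work lies, while the Bochner-space reduction and the $L$-spectral computation are routine.
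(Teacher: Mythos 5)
The paper states this lemma without any proof, so there is nothing of its own to compare you against; I can only judge the proposal on its merits. Your outline is the natural one and is essentially correct as far as it goes: the pointwise-in-time reduction (a fixed bounded spatial operator commutes with distributional time derivatives of Bochner functions), the block-diagonal reduction to $\bar{A}_{\lambda,k}^{\xi/2}$, the exact isometry $\|\bigotimes^k L^{\xi/2} v\|_{V_q^0(k)} = \|v\|_{V_q^\xi(k)}$ read off from the spectral definition of the norms, the factorization through the correction operator $\bigotimes^k (A_\lambda^{\xi/2}L^{-\xi/2})$, and the inverse identity via the group property of fractional powers of the positive operator $A_\lambda$ are all sound.

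The step you flag as the ``main obstacle'' is, however, where the proof actually lives, and you leave it open rather than closing it; as written the proposal is incomplete. The good news is that it closes more easily than you suggest: you do not need $S := A_\lambda^{\xi/2}L^{-\xi/2}$ to interact with the mixed weights $\pi$ and $\sigma$ at all. Each space $\bigotimes_{j=1}^k V^{\delta_{ij}}$ entering the intersection defining $V_1^0(k)$ is a Hilbert tensor product, and the Hilbert-space tensor product of bounded operators is bounded with norm equal to the product of the factor norms; hence $\bigotimes^k S$ is bounded on the $i$-th factor space with norm at most $\|S\|_{\mathcal{L}(V,V)}\|S\|_{\mathcal{L}(H,H)}^{k-1}$, and therefore on the intersection with its summed norm, with no further bookkeeping. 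The required boundedness of $S$ and $S^{-1}$ on $H$ and on $V$ is exactly the content of the domain identity $D(L^{s/2})=D(A^{s/2})$ plus interpolation, i.e., of the constants $c_{\lambda,\alpha}$ and $\tilde{c}_{\lambda,\alpha}$ appearing in \eqref{eq:BLiftedConstant} and Proposition~\ref{prop:FN}; the case $q=-1$ then follows by duality, as you indicate (modulo the analogous domain identity for $A^*$, which the paper also glosses over). Two caveats you should record: the resulting isomorphism constant degrades like $\|S\|_{\mathcal{L}(H,H)}^{k-1}$, which is harmless for fixed $N$ but not uniform in $N$; and since the domain identity is only assumed up to exponent $\bar{\alpha}/2$, the statement ``for any $\xi\in\mathbb{R}$'' is only justified for $\xi$ in the range actually used, $\xi=\alpha\in[0,\bar{\alpha}-1]$ --- your suggestion to iterate the group property does not extend the domain identity beyond that range.
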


We are now in a position to show the well-posedness of equation \eqref{eq:TruncatedCarleman}.
\begin{theorem}[Well-posedness]
\label{thm:WellPosedness}
Let $T < \infty$. Moreover, let $A$ fulfill \eqref{assumption:ABounded} and \eqref{assumption:AVHCoercive}, $B$ fulfill \eqref{assumption:BBilinearMapping} for some $\alpha \in [0, \bar{\alpha} - 1]$ and $\varepsilon = 1$, $f \in L^\infty(0, T; V^\alpha)$, and $y_0 \in V^\alpha$. Then, the weak form of the truncated Carleman linearization \eqref{eq:TruncatedCarleman}, which reads as
\begin{align}
\begin{aligned}
    y_N'(t) + \mathcal{A}_N(t) y_N(t) &= f_N(t) \quad\text{in }L^2(0, T; \left(U_{1}^0(N)\right)'),\\
    y_N(0) &= y_{N, 0},
\end{aligned}
\tag{TCW}
\label{eq:WeakTruncatedCarleman}
\end{align}
has a unique solution $y_N \in W(0, T; U_{1}^\alpha(N), U_{-1}^\alpha(N))$ that depends continuously on $y_0$ and $f$.
\end{theorem}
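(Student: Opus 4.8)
The plan is to follow the route prepared by the transformation to \eqref{eq:TransformedCarleman}: rather than attacking \eqref{eq:WeakTruncatedCarleman} directly in the Gelfand triple associated with $U_1^\alpha(N)$, where the bilinear bound \eqref{assumption:BBilinearMapping} only holds at regularity level $\alpha$ and therefore does not fit the weak-perturbation framework, I would instead solve the lifted system \eqref{eq:TransformedCarleman} for $\tilde{y}_N = \mathcal{D}_{\lambda,N}^{\alpha} y_N$ in the base triple $(U_1^0(N), Y(N), U_{-1}^0(N))$, and then transfer the solution back using the isomorphism of Lemma~\ref{lem:Isomorphism}. Concretely, I would first establish well-posedness of \eqref{eq:TransformedCarleman}, written in the form $\tilde{y}_N' + \widetilde{\mathcal{A}}_N(t)\tilde{y}_N = \tilde{f}_N$ to match Lemma~\ref{lem:PerturbationLinear}, then set $y_N := \mathcal{D}_{\lambda,N}^{-\alpha}\tilde{y}_N$ and verify that it solves \eqref{eq:WeakTruncatedCarleman} with the asserted regularity and continuous dependence.

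For the first step I would use the decomposition $\widetilde{\mathcal{A}}_N(t) = \mathcal{A}_{N,\operatorname{diag}} + \widetilde{\mathcal{B}}_N + \widetilde{\mathcal{F}}_N(t)$ and check the hypotheses of Lemma~\ref{lem:PerturbationLinear} in the triple $(U_1^0(N), Y(N), U_{-1}^0(N))$. Proposition~\ref{prop:AN} shows that $\mathcal{A}_{N,\operatorname{diag}}$ is bounded and $U_1^0(N)$-$Y(N)$ coercive with constants $\beta$, $\gamma$, and $N\lambda$ that are independent of $t$ for fixed $N$, so it plays the role of the coercive operator $A(t)$ in Lemma~\ref{lem:WellPosednessLinear}. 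The decisive point is the choice $\varepsilon = 1$: then $U_{-1+\varepsilon}^0(N) = U_0^0(N) = Y(N)$, so Propositions~\ref{prop:BN} and \ref{prop:FN} yield that both the time-independent part $\widetilde{\mathcal{B}}_N$ and the part $\widetilde{\mathcal{F}}_N(t)$ map $U_1^0(N)$ into $Y(N)$ rather than merely into the dual $U_{-1}^0(N)$. Hence $K(t) := \widetilde{\mathcal{B}}_N + \widetilde{\mathcal{F}}_N(t)$ is exactly a perturbation of the type admitted by Lemma~\ref{lem:PerturbationLinear}. Its required time regularity, namely $t \mapsto K(t)v \in L^\infty(0,T;Y(N))$ for each fixed $v$, follows from the bound $\|\widetilde{\mathcal{F}}_N(t)v\|_{Y(N)} \le c_{\mathcal{F}}(\alpha,1)\|f(t)\|_{V^\alpha} N^{1/2}\|v\|_{U_1^0(N)}$ of Proposition~\ref{prop:FN} together with the hypothesis $f \in L^\infty(0,T;V^\alpha)$.

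Before invoking Lemma~\ref{lem:PerturbationLinear} I would confirm that the lifted data are admissible: since $y_0 \in V^\alpha$ gives $A_\lambda^{\alpha/2} y_0 \in H$, the transformed initial datum $\tilde{y}_{N,0}$ lies in $Y(N)$, and since $f \in L^\infty(0,T;V^\alpha)$ gives $A_\lambda^{\alpha/2} f \in L^\infty(0,T;H)$, the transformed forcing $\tilde{f}_N$ lies in $L^\infty(0,T;Y(N)) \hookrightarrow L^2(0,T;U_{-1}^0(N))$, using $T<\infty$. Lemma~\ref{lem:PerturbationLinear} then furnishes a unique $\tilde{y}_N \in W(0,T; U_1^0(N), U_{-1}^0(N))$ solving \eqref{eq:TransformedCarleman} and depending continuously on $(\tilde{y}_{N,0}, \tilde{f}_N)$. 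Applying Lemma~\ref{lem:Isomorphism} with $\xi = \alpha$, the element $y_N := \mathcal{D}_{\lambda,N}^{-\alpha}\tilde{y}_N$ lies in $W(0,T; U_1^\alpha(N), U_{-1}^\alpha(N))$, and chaining the bounded maps $y_0 \mapsto \tilde{y}_{N,0}$ and $f \mapsto \tilde{f}_N$ with the continuous dependence of $\tilde{y}_N$ and the isomorphism gives continuous dependence of $y_N$ on $(y_0, f)$.

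The step I expect to require the most care is the equivalence of the two weak formulations under the transformation, i.e., showing that $y_N = \mathcal{D}_{\lambda,N}^{-\alpha}\tilde{y}_N$ is genuinely a weak solution of \eqref{eq:WeakTruncatedCarleman} in $L^2(0,T;(U_1^0(N))')$ and not merely of the lifted equation. This amounts to verifying that conjugation by $\mathcal{D}_{\lambda,N}^{\alpha}$ intertwines the two systems at the level of the duality pairing, using that each $\bar{A}_{\lambda,k}^{\alpha/2}$ commutes with $A_k$ and that $\mathcal{D}_{\lambda,N}^{\alpha}$ is self-adjoint and maps the relevant test-function space isomorphically, so that testing the lifted equation against $\mathcal{D}_{\lambda,N}^{\alpha}\phi$ reproduces \eqref{eq:WeakTruncatedCarleman} tested against $\phi$. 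Once this bookkeeping is in place, uniqueness for \eqref{eq:WeakTruncatedCarleman} transfers from uniqueness for \eqref{eq:TransformedCarleman} through the isomorphism.
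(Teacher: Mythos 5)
Your proposal is correct and follows essentially the same route as the paper's proof: transform to the lifted system \eqref{eq:TransformedCarleman}, decompose $\widetilde{\mathcal{A}}_N(t)$ into $\mathcal{A}_{N,\operatorname{diag}} + \widetilde{\mathcal{B}}_N + \widetilde{\mathcal{F}}_N(t)$, verify via Propositions~\ref{prop:AN}, \ref{prop:BN}, and \ref{prop:FN} (with $\varepsilon = 1$ so the perturbation maps into $Y(N)$) that Lemma~\ref{lem:PerturbationLinear} applies, and transfer the solution back through the isomorphism of Lemma~\ref{lem:Isomorphism}. Your additional checks on the admissibility of the lifted data and the intertwining of the two weak formulations are points the paper handles only implicitly, so they are welcome but do not constitute a different argument.
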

\begin{proof}
Proposition~\ref{prop:AN} implies that $\mathcal{A}_{N, \text{diag}}$ is $U_1^0(N)$-$Y(N)$ coercive. From Proposition~\ref{prop:FN}, we know that for any $v \in U_1^0(N)$ the mapping $t \mapsto \widetilde{\mathcal{F}}_N(t) v$ fulfills
\begin{equation*}
\| t \mapsto \widetilde{\mathcal{F}}_N(t) v\|_{L^\infty(0, T; Y(N))} \le c_{\mathcal{F}}(\alpha, \varepsilon) \| f \|_{L^\infty(0, T; V^\alpha)} \sqrt{N} \| v \|_{U^0_1(N)}.
\end{equation*}
With Proposition~\ref{prop:BN}, it follows that $\widetilde{\mathcal{B}}_N + \widetilde{\mathcal{F}}_N(t)$ as a perturbation of $\mathcal{A}_{N, \text{diag}}$ fulfills the conditions in Lemma~\ref{lem:PerturbationLinear}. Thus, the weak form of \eqref{eq:TransformedCarleman}, i.e.,
\begin{align}
\begin{aligned}
    \tilde{y}_N'(t) + \widetilde{\mathcal{A}}_N(t) \tilde{y}_N(t) &= \tilde{f}_N(t) \quad \text{ in } L^2(0, T; \left(U_{1}^0(N)\right)'),\\
    \tilde{y}_N(0) &= \tilde{y}_{N, 0},
\end{aligned}
\tag{$\text{TCW}_\alpha$}
\label{eq:WeakTransformedCarleman}
\end{align}
has a unique solution $\tilde{y}_N \in W(0, T; U_1^0(N), U_{-1}^0(N))$. With Lemma~\ref{lem:Isomorphism} and $(\mathcal{D}_N^{-\alpha})^* U_1^0(N) \subseteq U_1^0(N)$, we can infer that $y_N = \mathcal{D}_{N}^{-\alpha} \tilde{y}_N \in W(0, T; U_1^\alpha(N), U_{-1}^\alpha(N))$ solves \eqref{eq:WeakTruncatedCarleman}. To show uniqueness, assume that $z_N \in W(0, T; U_1^\alpha(N), U_{-1}^\alpha(N))$ solves \eqref{eq:WeakTruncatedCarleman}. Then, $\tilde{z}_N(t) = \mathcal{D}_{N, \lambda}^{\alpha} z_N(t)$ solves \eqref{eq:WeakTransformedCarleman}, which implies that $\tilde{z}_N = \tilde{y}_N$. Lemma~\ref{lem:Isomorphism} lets us deduce that $z_N = y_N$.
\end{proof}

\section{Convergence in the Case of Small Nonlinearities}
\label{sec:Convergence}
Having established the existence and (partial) uniqueness of a solution to \eqref{eq:WeakTruncatedCarleman}, the question arises as to how well the linearization approximates the original nonlinear Cauchy problem. In particular, we are interested in whether a solution of the truncated Carleman linearization converges to the true solution of the nonlinear system for $N \to \infty$, i.e., whether the nonlinear dynamics can be recovered if the truncation level is chosen large enough.

Let $y_e \in W(0, T; V, V')$ be the solution to problem \eqref{eq:CauchyProblemWeak}, also referred to as the exact solution, and $y_N \in W(0, T; V^{1 + \alpha}, V^{-1 + \alpha})$ the solution to the truncated system \eqref{eq:WeakTruncatedCarleman}. The specific quantity we are interested in is the error defined as the norm of $\eta(t) := y_N^{(1)}(t) - y(t)$. For further analysis, we define the error of all moments $\eta_N(t) := y_N(t) - y_{N, e}(t)$, where $y_{N, e}(t)$ denotes the vector of the moments $y_e^{(k)}(t)$ for $k \in \{1, \dots, N\}$. The error function $\eta_N$ fulfills the dynamical system
\begin{equation}
\label{eq:ErrorDynamicalSystem}
\begin{aligned}
    \eta_N'(t) + \mathcal{A}_N(t) \eta_N(t) &= r_N(t), \\
    \eta_N(0) &= 0, \\
    r_N(t) &= \begin{pmatrix}
        0 & \cdots & 0 & -B_N y_e^{(N + 1)}(t)
    \end{pmatrix}^T.
\end{aligned}
\end{equation}
Under the assumptions of Theorem~\ref{thm:WellPosedness}, the operator $\widetilde{\mathcal{A}}_N(t)$ is uniformly $U_1^0(N)$-$Y(N)$ coercive with some constants $\gamma_N$ and $\lambda_N$, which enables us to apply the estimate \eqref{eq:ParabolicEstimate} from Lemma~\ref{lem:WellPosednessLinear}. This provides a bound of $\eta_N$ in terms of the right-hand side $r_N$, which depends on the exact solution of the original problem. However, the quality of the estimate depends on the coercivity constants.
The proof of Theorem~\ref{thm:WellPosedness} establishes the coercivity of the operator $\widetilde{\mathcal{A}}_N(t)$ by applying Lemma~\ref{lem:PerturbationLinear}. Careful consideration of the proof of Lemma~\ref{lem:PerturbationLinear} reveals that these constants depend on $N$; specifically, $\gamma_N \to 0$ and $\lambda_N \to \infty$ as $N \to \infty$. This indicates that the Cauchy problem becomes ill-posed in the asymptotic limit and that the estimate from Lemma~\ref{lem:WellPosednessLinear} deteriorates. In what follows, we will explore how we can obtain coercivity in a more controlled manner for the case of small nonlinearities and forcing, yielding coercivity constants that are more robust with respect to $N$. This provides a better understanding of the system's asymptotic behavior as $N \to \infty$.
\begin{proposition}
\label{prop:ImprovedCoercivity}
Let $A$ fulfill \eqref{assumption:ABounded} and \eqref{assumption:AVHCoercive} with constants $\beta$, $\gamma$, and $\lambda$, let $B$ fulfill \eqref{assumption:BBilinearMapping} for some $\alpha \in [0, \bar{\alpha} - 1]$ and $\varepsilon = 0$, and let $f \in L^\infty(0, T; V^\alpha)$. If
\begin{align*}
c_{\mathcal{P}} := c_{\mathcal{B}}(\alpha, 0) + c_{\mathcal{F}}(\alpha, 0) \|f\|_{L^\infty(0, T; V^\alpha)} < \gamma,
\end{align*}
then the operator $\widetilde{\mathcal{A}}_N(t)$ is bounded and coercive with constants
\begin{align*}
    \langle \widetilde{\mathcal{A}}_{N}(t) u, v \rangle_{U_1^0(N)} &\le \left(\beta + c_{\mathcal{P}}\right) \|u\|_{U_1^0(N)} \|v\|_{U_1^0(N)},  \\
    \langle \widetilde{\mathcal{A}}_{N}(t) v, v\rangle_{U_1^0(N)} + N \lambda \| v \|^2_{Y(N)} &\ge \left(\gamma - c_{\mathcal{P}}\right) \| v \|_{U_1^0(N)}^2,
\end{align*}
for almost every $t \in [0, T)$ and all $u, v \in U_1^0(k)$.
\end{proposition}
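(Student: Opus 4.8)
The plan is to exploit the additive decomposition $\widetilde{\mathcal{A}}_N(t) = \mathcal{A}_{N,\operatorname{diag}} + \widetilde{\mathcal{B}}_N + \widetilde{\mathcal{F}}_N(t)$ and to treat the two off-diagonal blocks as perturbations of the coercive diagonal part. The decisive structural feature, and the reason the hypothesis fixes $\varepsilon = 0$, is that in this case Propositions~\ref{prop:BN} and \ref{prop:FN} provide bounds into the exact dual space $U_{-1}^0(N) = (U_1^0(N))'$ with constants that are independent of $N$, since the factor $N^{\varepsilon/2}$ reduces to $1$. This lets the perturbations be absorbed directly into the duality pairing $\langle \cdot, \cdot\rangle_{U_1^0(N)}$ without invoking the $N$-dependent machinery of Lemma~\ref{lem:PerturbationLinear}, which is precisely what yields the uniform constants $\beta + c_{\mathcal{P}}$ and $\gamma - c_{\mathcal{P}}$.

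For boundedness I would fix $u, v \in U_1^0(N)$ and split $\langle \widetilde{\mathcal{A}}_N(t) u, v\rangle_{U_1^0(N)}$ into the three block contributions. The diagonal term is bounded by $\beta \|u\|_{U_1^0(N)} \|v\|_{U_1^0(N)}$ via Proposition~\ref{prop:AN}. For each off-diagonal term I would apply the Cauchy--Schwarz-type estimate for the Gelfand-triple pairing, $|\langle w, v\rangle_{U_1^0(N)}| \le \|w\|_{U_{-1}^0(N)} \|v\|_{U_1^0(N)}$, with $w = \widetilde{\mathcal{B}}_N u$ and $w = \widetilde{\mathcal{F}}_N(t) u$, and then insert the operator bounds from Propositions~\ref{prop:BN} and \ref{prop:FN} at $\varepsilon = 0$. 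Replacing $\|f(t)\|_{V^\alpha}$ by its essential supremum $\|f\|_{L^\infty(0,T;V^\alpha)}$, valid for a.e.\ $t$, collects the three contributions into $(\beta + c_{\mathcal{B}}(\alpha,0) + c_{\mathcal{F}}(\alpha,0)\|f\|_{L^\infty(0,T;V^\alpha)}) \|u\|_{U_1^0(N)} \|v\|_{U_1^0(N)}$, which is exactly $(\beta + c_{\mathcal{P}})\|u\|_{U_1^0(N)}\|v\|_{U_1^0(N)}$.

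For coercivity I would test with $v = u$. Proposition~\ref{prop:AN} gives $\langle \mathcal{A}_{N,\operatorname{diag}} v, v\rangle_{U_1^0(N)} \ge \gamma \|v\|_{U_1^0(N)}^2 - N\lambda \|v\|_{Y(N)}^2$. The two perturbation terms are bounded below by minus their absolute values, and the same duality estimates give $\langle \widetilde{\mathcal{B}}_N v, v\rangle_{U_1^0(N)} \ge -c_{\mathcal{B}}(\alpha,0)\|v\|_{U_1^0(N)}^2$ and $\langle \widetilde{\mathcal{F}}_N(t) v, v\rangle_{U_1^0(N)} \ge -c_{\mathcal{F}}(\alpha,0)\|f\|_{L^\infty(0,T;V^\alpha)}\|v\|_{U_1^0(N)}^2$ for a.e.\ $t$. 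Summing and moving $N\lambda\|v\|_{Y(N)}^2$ to the left-hand side yields the claimed lower bound with effective coercivity constant $\gamma - c_{\mathcal{P}}$, which the hypothesis $c_{\mathcal{P}} < \gamma$ renders strictly positive.

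I do not expect a genuine obstacle here; the argument is a bookkeeping assembly of the three propositions. The only points requiring care are first confirming that each pairing $\langle \widetilde{\mathcal{B}}_N v, v\rangle_{U_1^0(N)}$ and $\langle \widetilde{\mathcal{F}}_N(t) v, v\rangle_{U_1^0(N)}$ is genuinely the Gelfand-triple duality pairing, which is licensed by $\widetilde{\mathcal{B}}_N v, \widetilde{\mathcal{F}}_N(t) v \in U_{-1}^0(N)$ at $\varepsilon = 0$, and second handling the time dependence uniformly by passing to the essential supremum of $\|f(\cdot)\|_{V^\alpha}$ so that the resulting constants hold for almost every $t \in [0, T)$. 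The conceptual content lies entirely in the choice $\varepsilon = 0$, which keeps the perturbations in the dual space and thereby decouples the effective constants from the truncation level $N$.
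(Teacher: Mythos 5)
Your proposal matches the paper's proof essentially verbatim: both decompose $\widetilde{\mathcal{A}}_N(t)$ into its diagonal, upper-diagonal, and lower-diagonal parts, bound the diagonal via Proposition~\ref{prop:AN}, and absorb the two off-diagonal blocks through the duality estimate $|\langle w, v\rangle_{U_1^0(N)}| \le \|w\|_{U_{-1}^0(N)}\|v\|_{U_1^0(N)}$ combined with Propositions~\ref{prop:BN} and \ref{prop:FN} at $\varepsilon = 0$. The argument is correct and no further comment is needed.
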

\begin{proof}
The boundedness and coercivity follow from Propositions~\ref{prop:AN}, \ref{prop:BN}, and \ref{prop:FN} for $\varepsilon = 0$. More specifically, let $u, v \in U_1^0(N)$ be arbitrary but fixed. Then, we obtain an upper bound through
\begin{align*}
&\langle \widetilde{\mathcal{A}}_{N}(t) u, v \rangle_{U_1^0(N)} = \langle\mathcal{A}_{N, \text{diag}}u, v\rangle_{U_1^0(N)} + \langle\widetilde{\mathcal{B}}_N u, v\rangle_{U_1^0(N)} + \langle\widetilde{\mathcal{F}}_N(t) u, v\rangle_{U_1^0(N)} \\
&\quad \le \beta \|u\|_{U_1^0(N)} \|v\|_{U_1^0(N)} + \| \widetilde{\mathcal{B}}_N u \|_{U_{-1}^0(N)} \|v\|_{U_1^0(N)} + \| \widetilde{\mathcal{F}}_N(t) u \|_{U_{-1}^0(N)} \|v\|_{U_1^0(N)} \\
&\quad \le \beta \|u\|_{U_1^0(N)} \|v\|_{U_1^0(N)} + c_{\mathcal{B}}(\alpha, 0) \|u\|_{U_1^0(N)} \|v\|_{U_1^0(N)} + c_{\mathcal{F}}(\alpha, 0) \|f(t)\|_{V^\alpha} \|u\|_{U_1^0(N)} \|v\|_{U_1^0(N)} \\
&\quad = \left(\beta + c_{\mathcal{B}}(\alpha, 0) + c_{\mathcal{F}}(\alpha, 0) \|f(t)\|_{V^\alpha} \right) \|u\|_{U_1^0(N)} \|v\|_{U_1^0(N)}
\end{align*}
for almost every $t \in [0, T)$. Similarly, the coercivity follows from
\begin{align*}
&\langle \widetilde{\mathcal{A}}_{N}(t) v, v \rangle_{U_1^0(N)} \ge \langle\mathcal{A}_{N, \text{diag}}v, v\rangle_{U_1^0(N)} - \left|\langle\widetilde{\mathcal{B}}_N v, v\rangle_{U_1^0(N)}\right| - \left|\langle\widetilde{\mathcal{F}}_N(t) v, v\rangle_{U_1^0(N)}\right| \\
&\quad \ge \gamma \|v\|_{U_1^0(N)}^2 - N \lambda \|v\|_{Y(N)}^2 - \| \widetilde{\mathcal{B}}_N v \|_{U_{-1}^0(N)} \|v\|_{U_1^0(N)} - \| \widetilde{\mathcal{F}}_N(t) v \|_{U_{-1}^0(N)} \|v\|_{U_1^0(N)} \\
&\quad \ge \left(\gamma - c_{\mathcal{B}}(\alpha, 0) - c_{\mathcal{F}}(\alpha, 0) \|f(t)\|_{V^\alpha}\right) \|v\|_{U_1^0(N)}^2 - N \lambda \|v\|_{Y(N)}^2. \qedhere
\end{align*}
\end{proof}

The estimate \eqref{eq:ParabolicEstimate} applied to \eqref{eq:ErrorDynamicalSystem} with the improved constants of Proposition~\ref{prop:ImprovedCoercivity} yields a bound of $\eta_N$ that can be expressed in terms of the $L^2(0, T; V^0_1(N))$-norm of $\bigotimes^{N + 1} y_e$. Thus, it is necessary to estimate the latter term.
\begin{proposition}
\label{prop:zNEstimate}
Let $T \in (0, \infty]$. For all $z \in W(0, T; V, V')$ and $N \in \mathbb{N}$ it holds that
\begin{align*}
    \| \bigotimes^N z \|_{L^2(0, T; V_1^0(N))} \le \sqrt{N} \left(\|z(0)\|_H +  \| z \|_{W(0, T; V, V')} \right)^N.
\end{align*}
\end{proposition}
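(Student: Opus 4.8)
The plan is to compute the pointwise norm $\bigl\|\bigotimes^N z(t)\bigr\|_{V_1^0(N)}$ explicitly in the eigenbasis and then integrate in time. Fix $t$ with $z(t) \in V$ (which holds for a.e.\ $t$, since $z \in L^2(0,T;V)$) and abbreviate $z_i := (z(t), \varphi_i)_H$. Because $\varphi_{\vec{i}} = \varphi_{i_1} \otimes \cdots \otimes \varphi_{i_N}$ and the tensor scalar product factorizes, the coordinates of the tensor power are $\langle \bigotimes^N z(t), \varphi_{\vec{i}}\rangle_V = \prod_{l=1}^N z_{i_l}$. Substituting this together with $\alpha = 0$ and $q = 1$ into the definition of the $V_1^0(N)$-norm gives
\begin{align*}
\Big\| \bigotimes^N z(t) \Big\|_{V_1^0(N)}^2 = \sum_{\vec{i} \in \mathbb{N}^N} \Big(\sum_{l=1}^N \lambda_{i_l}\Big) \prod_{m=1}^N z_{i_m}^2 .
\end{align*}

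Next I would exploit the additive structure of $\sigma(\lambda_{\vec{i}}) = \sum_{l} \lambda_{i_l}$ to split the sum into $N$ identical contributions. For each fixed $l$, the weight $\lambda_{i_l}$ couples only to the $l$-th variable, so the corresponding summand factorizes as
\begin{align*}
\sum_{\vec{i} \in \mathbb{N}^N} \lambda_{i_l} \prod_{m=1}^N z_{i_m}^2 = \Big(\sum_{i} \lambda_i z_i^2\Big) \prod_{m \neq l}\Big(\sum_i z_i^2\Big) = \|z(t)\|_V^2 \, \|z(t)\|_H^{2(N-1)},
\end{align*}
where I used $\sum_i \lambda_i z_i^2 = \|z(t)\|_V^2$ and $\sum_i z_i^2 = \|z(t)\|_H^2$ (Parseval). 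Summing over $l = 1, \dots, N$ yields the pointwise identity $\bigl\|\bigotimes^N z(t)\bigr\|_{V_1^0(N)}^2 = N \,\|z(t)\|_V^2 \,\|z(t)\|_H^{2(N-1)}$, whence
\begin{align*}
\Big\| \bigotimes^N z \Big\|_{L^2(0,T;V_1^0(N))}^2 = N \int_0^T \|z(t)\|_V^2 \,\|z(t)\|_H^{2(N-1)} \,\mathrm{d}t \le N \,\|z\|_{L^\infty(0,T;H)}^{2(N-1)} \,\|z\|_{L^2(0,T;V)}^2 .
\end{align*}

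It then remains to bound the two factors by $\|z(0)\|_H + \|z\|_{W(0,T;V,V')}$. The $L^2(0,T;V)$-factor is immediate, since $\|z\|_{L^2(0,T;V)} \le \|z\|_{W(0,T;V,V')}$ directly from the definition of the $W$-norm. For the $L^\infty(0,T;H)$-factor I would invoke the energy identity $\frac{\mathrm{d}}{\mathrm{d}t}\|z(t)\|_H^2 = 2\langle z'(t), z(t)\rangle_V$, valid for $z \in W(0,T;V,V')$, to write $\|z(t)\|_H^2 = \|z(0)\|_H^2 + 2\int_0^t \langle z'(s), z(s)\rangle_V \,\mathrm{d}s$; bounding the integrand by $\|z'(s)\|_{V'}\|z(s)\|_V$, applying Cauchy--Schwarz in time, and using Young's inequality $2ab \le a^2 + b^2$ gives $\|z(t)\|_H^2 \le \|z(0)\|_H^2 + \|z\|_{W(0,T;V,V')}^2 \le (\|z(0)\|_H + \|z\|_{W(0,T;V,V')})^2$ for every $t$, hence $\|z\|_{L^\infty(0,T;H)} \le \|z(0)\|_H + \|z\|_{W(0,T;V,V')}$. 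Inserting both bounds and taking square roots produces the claimed inequality; the same argument applies verbatim for $T = \infty$, since every estimate is uniform in the finite upper integration limit. I expect the genuine subtlety to be this constant-free $L^\infty$-in-$H$ bound: the bare Lions--Magenes embedding $W(0,T;V,V') \hookrightarrow C(0,T;H)$ would carry an embedding constant that may depend on $T$ and thereby spoil the clean prefactor, so it is essential to argue through the energy identity in order to keep the constant exactly $\sqrt{N}$.
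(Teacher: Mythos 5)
Your proposal is correct and follows essentially the same route as the paper: the pointwise identity $\|\bigotimes^N z(t)\|_{V_1^0(N)}^2 = N\,\|z(t)\|_V^2\,\|z(t)\|_H^{2(N-1)}$ (which the paper states directly and you derive explicitly in the eigenbasis), followed by pulling out the $L^\infty(0,T;H)$-factor and bounding it via the energy identity. The constant-free $L^\infty$-in-$H$ bound you re-derive is exactly the paper's Lemma~\ref{lem:WBound}, proved there by the identical argument.
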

\begin{proof}
Let $N \in \mathbb{N}$ and $z \in W(0, T; V, V')$ be arbitrary but fixed. 
With Lemma~\ref{lem:WBound}, it follows that
    \begin{align*}
&\| \bigotimes^{N} z \|_{L^2(0, T; V^0_1(N))}^2 = \int_0^T \| \bigotimes^{N} z(t) \|_{V_1^0(N)}^2 \mathrm{d} t = \int_0^T \sum_{i = 1}^N \prod_{j = 1}^N \| z(t) \|_{V^{\delta_{i, j}}}^2 \mathrm{d}t \\
&\quad= N \int_0^T \|z(t)\|_{V}^2 \| z(t) \|_H^{2 (N - 1)} \mathrm{d}t \le N (\|z\|_{L^\infty(0, T; H)})^{2 (N - 1)} \int_0^T \|z(t)\|_{V}^2 \mathrm{d}t \\
&\quad\le N \left(\|z(0)\|_H + \|z(t)\|_{W(0, T; V, V')} \right)^{2N}. \qedhere
\end{align*}
\end{proof}
% &\quad\le N \left(\|z(0)\|_H + \|z(t)\|_{W(0, T; V^{1 + \alpha}, V^{-1 + \alpha})} \right)^{2(N - 1)} \|z(t)\|_{W(0, T; V^{1 + \alpha}, V^{-1 + \alpha})} \\

Finally, we achieve convergence, provided the exact solution is small enough.
\begin{theorem}[Convergence]
\label{thm:Convergence}
Let $T < \infty$. Let $A$, $B$, and $f$ fulfill all assumptions of Proposition~\ref{prop:ImprovedCoercivity} and $c_{\mathcal{P}} < \gamma$. The initial condition is assumed to fulfill $y_0 \in V^\alpha$. Moreover, assume that \eqref{eq:CauchyProblemWeak} admits a solution $y_e \in W(0, T; V^{1 + \alpha}, V^{-1 + \alpha})$. Then, it holds that
\begin{align*}
    &\| \eta \|_{L^\infty(0, T; V^\alpha)} + \|\eta\|_{W(0, T; V^{1 + \alpha}, V^{-1 + \alpha})} \\
    &\quad\le c_1 \sqrt{N + 1} \left(c_2 \exp(\lambda T)\left( \|y_0\|_{V^\alpha} + \| y_e\|_{W(0, T; V^{1 + \alpha}, V^{-1 + \alpha})}\right) \right)^{N + 1},
\end{align*}
where $\eta(t) := y_N^{(1)}(t) - y_e(t)$ and $y_N \in W(0, T; U_1^\alpha(N), U_{-1}^\alpha(N))$ is the solution to \eqref{eq:WeakTruncatedCarleman}. The constants $c_1, c_2 > 0$ do not depend on $N$, $T$, $y_0$, or $y_e$. If $\lambda = 0$, one can choose $T = \infty$.
\end{theorem}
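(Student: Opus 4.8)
The strategy is to estimate the full-moment error $\eta_N$ via the parabolic energy estimate \eqref{eq:ParabolicEstimate}, but carried out in the transformed coordinates of Section~\ref{sec:Wellposedness} so that the $N$-independent coercivity of Proposition~\ref{prop:ImprovedCoercivity} becomes available; reading off the first block of the result then yields the bound on $\eta$.

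First I would transform the error system \eqref{eq:ErrorDynamicalSystem}. Setting $\tilde\eta_N := \mathcal{D}_{\lambda, N}^{\alpha}\eta_N$, the manipulation leading to \eqref{eq:TransformedCarleman} together with Lemma~\ref{lem:Isomorphism} shows that $\tilde\eta_N$ solves $\tilde\eta_N'(t) + \widetilde{\mathcal{A}}_N(t)\tilde\eta_N(t) = \tilde r_N(t)$ with $\tilde\eta_N(0) = 0$ and $\tilde r_N = \mathcal{D}_{\lambda, N}^{\alpha} r_N$, which has only its last block nonzero. Using the commutation identity $\bar{A}_{\lambda, N}^{\alpha/2} B_N = \widetilde{B}_N \bar{A}_{\lambda, N+1}^{\alpha/2}$ and $\bar{A}_{\lambda, N+1}^{\alpha/2}\bigotimes^{N+1} y_e = \bigotimes^{N+1} z$ with $z := A_\lambda^{\alpha/2} y_e$, this block equals $-\widetilde{B}_N \bigotimes^{N+1} z$. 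Since $y_e \in W(0, T; V^{1+\alpha}, V^{-1+\alpha})$, the function $z$ lies in $W(0, T; V, V')$ with $\|z\|_{W(0, T; V, V')} \lesssim \|y_e\|_{W(0, T; V^{1+\alpha}, V^{-1+\alpha})}$ and $\|z(0)\|_H \lesssim \|y_0\|_{V^\alpha}$. Applying Lemma~\ref{lem:Bk} to $\widetilde{B}_N$ with $\varepsilon = 0$ (so that its operator norm is $N$-independent by \eqref{eq:BLiftedConstant}) and then Proposition~\ref{prop:zNEstimate} at level $N+1$ bounds the residual by $\|\tilde r_N\|_{L^2(0, T; U_{-1}^0(N))} \lesssim \sqrt{N+1}\,(\|z(0)\|_H + \|z\|_{W(0, T; V, V')})^{N+1}$.

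Next I would apply the $W$-norm estimate \eqref{eq:ParabolicEstimateWNorm} to the transformed error equation over the Gelfand triple $(U_1^0(N), Y(N), U_{-1}^0(N))$. By Proposition~\ref{prop:ImprovedCoercivity} with $\varepsilon = 0$ and $c_{\mathcal{P}} < \gamma$, the operator $\widetilde{\mathcal{A}}_N(t)$ is bounded and coercive with constants $\beta + c_{\mathcal{P}}$ and $\gamma - c_{\mathcal{P}}$, both independent of $N$, and weak-coercivity constant $N\lambda$. Because the boundedness and coercivity constants do not depend on $N$, the resulting $c_L$ is uniform in $N$, and the only $N$-dependent factor is $\exp(N\lambda T)$. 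As $\tilde\eta_N(0) = 0$, this gives $\|\tilde\eta_N\|_{L^\infty(0, T; Y(N))} + \|\tilde\eta_N\|_{W(0, T; U_1^0(N), U_{-1}^0(N))} \le c_L \exp(N\lambda T)\|\tilde r_N\|_{L^2(0, T; U_{-1}^0(N))}$. Each summand on the left dominates the corresponding contribution of the first block $\tilde\eta_N^{(1)} = A_\lambda^{\alpha/2}\eta$, and the norm equivalences $\|A_\lambda^{\alpha/2}\cdot\|_H \simeq \|\cdot\|_{V^\alpha}$ and $\|A_\lambda^{\alpha/2}\cdot\|_V \simeq \|\cdot\|_{V^{1+\alpha}}$ (with constants independent of $N$ and $T$) translate the first-block norms back into $\|\eta\|_{L^\infty(0, T; V^\alpha)} + \|\eta\|_{W(0, T; V^{1+\alpha}, V^{-1+\alpha})}$. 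Combining the three estimates and using $\exp(N\lambda T) \le (\exp(\lambda T))^{N+1}$, the $N$-independent prefactors collect into $c_1$ while the norm-equivalence constant folds into the base $c_2$, yielding the claimed inequality; for $\lambda = 0$ the weak-coercivity constant $N\lambda$ vanishes, so Lemma~\ref{lem:WellPosednessLinear} permits $T = \infty$.

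The main obstacle is securing $N$-uniformity of the constants: a direct use of the perturbation argument behind Theorem~\ref{thm:WellPosedness} would produce coercivity constants with $\gamma_N \to 0$ and $\lambda_N \to \infty$, destroying any hope of a bound that improves as $N \to \infty$. The small-nonlinearity hypothesis $c_{\mathcal{P}} < \gamma$ is precisely what replaces these by the $N$-independent constants of Proposition~\ref{prop:ImprovedCoercivity}, and verifying that $c_L$ in \eqref{eq:ParabolicEstimateWNorm} inherits this $N$-independence—so that $\exp(N\lambda T)$ remains the sole growing exponential—is the delicate point. The remaining growth is then entirely carried by the residual estimate, whose $(N+1)$-st power, supplied by Proposition~\ref{prop:zNEstimate}, drives convergence once the data is small enough that the base of the final bound is below one.
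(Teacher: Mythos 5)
Your proposal is correct and follows essentially the same route as the paper's proof: transform the error system with $\mathcal{D}_{\lambda,N}^{\alpha}$, invoke the $N$-independent coercivity of Proposition~\ref{prop:ImprovedCoercivity}, bound the residual via Lemma~\ref{lem:Bk} (with $\varepsilon=0$) and Proposition~\ref{prop:zNEstimate}, and translate back through the norm equivalences induced by $A_\lambda^{\alpha/2}$. The only cosmetic difference is that you cite \eqref{eq:ParabolicEstimateWNorm} directly where the paper applies \eqref{eq:ParabolicEstimate} and then bounds $\tilde{\eta}_N'$ separately from the equation, which amounts to the same estimate.
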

\begin{proof}
Due to Proposition~\ref{prop:ImprovedCoercivity}, we know that \eqref{eq:WeakTruncatedCarleman} has a unique solution $y_N \in \allowbreak W(0, T; \allowbreak U_1^0(N),\allowbreak  U_{-1}^0(N))$. Based on the definition $\eta_N(t) = y_N(t) - y_{N, e}(t)$, we define $\tilde{\eta}_N(t) = \mathcal{D}_{\lambda, N}^\alpha \eta_N(t)$, which satisfies the transformed dynamical system
\begin{equation*}
\begin{aligned}
    \tilde{\eta}_N'(t) + \widetilde{\mathcal{A}}_N(t) \tilde{\eta}_N(t) &= \tilde{r}_N(t) \quad \text{in } L^2(0, T; \left(U_1^0(N)\right)'), \\
    \tilde{\eta}_N(0) &= 0, \\
    \tilde{r}_N(t) &= \begin{pmatrix}
        0 & \cdots & 0 & -\widetilde{B}_N \tilde{y}_e^{(N + 1)}(t)
    \end{pmatrix}^T
\end{aligned}
\end{equation*}
with $\tilde{y}_e^{(N + 1)}(t) := \bigotimes^{N + 1} A_{\lambda}^{\alpha/2} y_e(t)$. From Lemma~\ref{lem:WellPosednessLinear} and Proposition~\ref{prop:ImprovedCoercivity}, we know that the solution to this dynamical system fulfills the estimate
\begin{align}
    &\| \tilde{\eta}_N \|_{L^\infty(0, T; Y(N))}^2 + \left(\gamma - c_{\mathcal{P}}\right)\|\tilde{\eta}_N\|_{L^2(0, T; U_1^0(N))}^2 \nonumber \\
    &\quad\le \frac{1}{\gamma - c_{\mathcal{P}}} \int_0^T \exp(2 N \lambda (T - t)) \|\tilde{r}_N(t)\|^2_{U_{-1}^0(N)} \mathrm{d}t \nonumber \\
    &\quad\le \frac{1}{\gamma - c_{\mathcal{P}}} \exp(2 N \lambda T) \| \tilde{r}_N \|_{L^2(0, T; U_{-1}^0(N))}^2.
\label{eq:ProofConvergenceBoundByResidual}
\end{align}
Lemma~\ref{lem:Bk} and Proposition~\ref{prop:zNEstimate} imply that
\begin{align}
    \|\tilde{r}_N\|_{L^2(0, T; U_{-1}^0(N))} &= \| \widetilde{B}_N \tilde{y}_e^{(N + 1)} \|_{L^2(0, T; V_{-1}^0(N))} \le \sqrt{2} c_{\lambda,\alpha} c_B(\alpha, 0) \| \tilde{y}_e^{(N + 1)} \|_{L^2(0, T; V_1^0(N + 1))} \nonumber \\
    &\le \sqrt{2} c_{\lambda,\alpha} c_B(\alpha, 0) \sqrt{N + 1} \left(\|\tilde{y}_e(0)\|_{H} + \| \tilde{y}_e \|_{W(0, T; V, V')} \right)^{N + 1}.
\label{eq:ProofConvergenceResidual}
\end{align}
Moreover, we can relate the estimate of $\eta_N'$ with $\eta_N$ by
\begin{align}
\|\tilde{\eta}_N'\|_{L^2(0, T; U_{-1}^0(N))}^2 &= \int_0^T \| -\widetilde{\mathcal{A}}_N(t) \tilde{\eta}_N(t) + \tilde{r}_N(t) \|_{U_{-1}^0(N)}^2 \mathrm{d}t \nonumber \\
&\le \int_0^T \left((\beta + c_{\mathcal{P}}) \|\tilde{\eta}_N(t)\|_{U_1^0(N)}^2 + \|\tilde{r}_N(t)\|_{U_{-1}^0(N)}^2 \right) \mathrm{d}t \nonumber \\
&= (\beta + c_{\mathcal{P}}) \| \tilde{\eta}_N\|_{L^2(0, T; U_1^0(N))}^2 + \|\tilde{r}_N\|_{L^2(0, T; U_{-1}^0(N))}^2.
\label{eq:ProofConvergenceTimeDerivative}
\end{align}
By simplifying the norms and transformed variables, this, finally, leads to the estimate
\begin{alignat*}{3}
    &\| \eta\|_{L^\infty(0, T; V^\alpha)} + \|\eta\|_{W(0, T; V^{1 + \alpha}, V^{-1 + \alpha})} = \tilde{c}_{\lambda, \alpha} \left(\| \tilde{\eta}\|_{L^\infty(0, T; H)} + \|\tilde{\eta}\|_{W(0, T; V, V')}\right) \span\span \\
    &\quad&\le\;\;\;&\tilde{c}_{\lambda, \alpha}\left(\|\tilde{\eta}_N\|_{L^\infty(0, T; Y(N))} + \|\tilde{\eta}_N\|_{W(0, T; U_{1}^0(N), U_{-1}^0(N))}\right) \\
    &\quad&\ule{\eqref{eq:ProofConvergenceTimeDerivative},\eqref{eq:ProofConvergenceBoundByResidual}}\;\;\;& \tilde{c}_{\lambda, \alpha}\bar{c}(\gamma, \beta, c_{\mathcal{P}}) \exp(N \lambda T) \|\tilde{r}_N\|_{L^2(0, T; U_{-1}^0(N))}  \\
    &\quad&\ule{\eqref{eq:ProofConvergenceResidual}}\;\;\;& \tilde{c}_{\lambda, \alpha}\bar{c}(\gamma, \beta, c_{\mathcal{P}}) \sqrt{2} c_{\lambda,\alpha} c_B(\alpha, 0) \exp(N \lambda T) \sqrt{N + 1} \left(\| \tilde{y}_0\|_{H} + \| \tilde{y}_e\|_{W(0, T; V, V')} \right)^{N + 1} \\
    &\quad &=\;\;\;& c_1 \sqrt{N + 1} \left(\exp(\lambda T) \left( \| \tilde{y}_0\|_{H} + \| \tilde{y}_e\|_{W(0, T; V, V')}\right) \right)^{N + 1} \\
    &\quad &\le\;\;\;& c_1 \sqrt{N + 1} \left(c_2 \exp(\lambda T) \left( \| y_0\|_{V^\alpha} + \| y_e\|_{W(0, T; V^{1 + \alpha}, V^{-1 + \alpha})}\right) \right)^{N + 1},
\end{alignat*}
where $\tilde{c}_{\lambda, \alpha}$ is a constant depending only on $\|L^{\alpha / 2} \allowbreak A_\lambda^{-\alpha/2}\|_{\mathcal{L}(H, H)}$, $\|L^{\alpha / 2} \allowbreak A_\lambda^{-\alpha/2}\|_{\mathcal{L}(V, V)}$, and $\|L^{\alpha / 2} \allowbreak A_\lambda^{-\alpha/2}\|_{\mathcal{L}(V', V')}$, while $c_2$ is defined analogously, but with $A_\lambda^{\alpha/2} L^{-\alpha / 2}$. The constant $\bar{c}(\gamma, \beta, c_{\mathcal{P}}) > 0$ depends only on the parameters listed. In the special case $\alpha = 0$, we have $\tilde{c}_{\lambda, \alpha} = c_2 = 1$.
\end{proof}

\begin{remark}
Theorem~\ref{thm:Convergence} assumes that the exact solution $y_e$ exhibits increased regularity compared to the results discussed in Section~\ref{sec:NonlinearCauchyProblem}. Proving improved regularity can be achieved with additional assumptions on the initial condition, the forcing, and the involved operators. In the linear case, this is broadly covered; see, e.g., \cite[pp.~180 ff.]{Bensoussan2007RepresentationSystems} for general weak forms of Cauchy problems on Hilbert spaces and \cite[pp.~410 ff.]{Evans2010PartialEquations} specifically for PDEs. In the nonlinear case, this has also been explored, particularly through concepts like strong variational solutions to the three-dimensional Navier--Stokes equations; see, e.g., \cite{Barbu2011StabilizationFlows}. Alternatively, one could extend assumption~\eqref{assumption:BBounded} in such a way that it accounts for $\alpha$ similarly to \eqref{assumption:BBilinearMapping}. This would allow for an equivalent of Lemma~\ref{lem:WellPosednessNonlinear} with improved regularity. However, a detailed discussion is deferred as it would exceed the scope of this work.
\end{remark}

Theorem~\ref{thm:Convergence} can be refined if we assume $\alpha = 0$ and leverage the results about solutions to the nonlinear Cauchy problem~\eqref{eq:CauchyProblemWeak}.
\begin{corollary}[Convergence for $\alpha = 0$]
\label{corollary:ConergenceAlphaZero}
    Let $T < \infty$. Let $A$ fulfill \eqref{assumption:ABounded} and \eqref{assumption:AVHCoercive} with constants $\beta$, $\gamma$, and $\lambda$, let $B$ fulfill \eqref{assumption:BBounded}, and let $f \in L^2(0, T; V') \cap L^\infty(0, T; H)$ and $y_0 \in H$. Additionally, assume that $B$, $f$, and $y_0$ are so small that $c_{\mathcal{P}} < \gamma$ and $\| y_0 \|_H + \|f\|_{L^2(0, T; V')} \le \rho(T)$, where $c_{\mathcal{P}}$ and $\rho(T)$ are the constants from Proposition~\ref{prop:ImprovedCoercivity} and Lemma~\ref{lem:WellPosednessNonlinear}. Let $y_e \in W(0, T; V, V')$ be the unique solution to \eqref{eq:CauchyProblemWeak}. Then, it holds that
    \begin{align*}
        \| \eta \|_{L^\infty(0, T; H)} + \|\eta\|_{W(0, T; V, V')} \le c_1\sqrt{N + 1} \left( 2 c_N \exp(2 \lambda T) \left(\|y_0\|_H + \|f\|_{L^2(0, T; V')}\right)\right)^{N + 1},
    \end{align*}
    where $\eta(t) = y_N^{(1)} - y_e(t)$ and $y_N \in W(0, T; U_1^0(N), U_{-1}^0(N))$ is the unique solution to \eqref{eq:WeakTruncatedCarleman} for any $N \in \mathbb{N}$. The constant $c_1 > 0$ does not depend on $N$, $T$, $y_0$, or $f$, and $c_N$ is the constant from Lemma~\ref{lem:WellPosednessNonlinear}. If $\lambda = 0$, one can choose $T = \infty$.
\end{corollary}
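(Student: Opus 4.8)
The plan is to obtain the corollary as a direct specialization of Theorem~\ref{thm:Convergence} to $\alpha = 0$, combined with the nonlinear a priori estimate of Lemma~\ref{lem:WellPosednessNonlinear}. First I would check that all hypotheses of Theorem~\ref{thm:Convergence} are met here. With $\alpha = 0$ we have $V^0 = H$, so the requirement $f \in L^\infty(0, T; V^\alpha)$ reads $f \in L^\infty(0, T; H)$, which is granted. As noted after \eqref{assumption:BBilinearMapping}, assumption \eqref{assumption:BBounded} implies \eqref{assumption:BBilinearMapping} for $\alpha = 0$ and $\varepsilon = 0$ via Young's inequality, so $B$ satisfies the bilinear-mapping hypothesis needed by Proposition~\ref{prop:ImprovedCoercivity} and hence by Theorem~\ref{thm:Convergence}. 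Together with $c_{\mathcal{P}} < \gamma$, the improved coercivity of Proposition~\ref{prop:ImprovedCoercivity} holds, guaranteeing the unique $y_N \in W(0, T; U_1^0(N), U_{-1}^0(N))$.

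The key simplification at $\alpha = 0$ is that the elevated-regularity hypothesis of Theorem~\ref{thm:Convergence}, namely $y_e \in W(0, T; V^{1 + \alpha}, V^{-1 + \alpha})$, collapses to $y_e \in W(0, T; V, V')$, i.e., the standard weak-solution class. Thus no regularity of the exact solution beyond that of Lemma~\ref{lem:WellPosednessNonlinear} is required. Moreover, the smallness condition $\|y_0\|_H + \|f\|_{L^2(0, T; V')} \le \rho(T)$ is exactly what Lemma~\ref{lem:WellPosednessNonlinear} needs to produce a unique solution $y_e \in W(0, T; V, V')$ of \eqref{eq:CauchyProblemWeak}; hence the existence of the exact solution invoked by Theorem~\ref{thm:Convergence} is furnished by the same hypothesis, and the two smallness conditions ($c_{\mathcal{P}} < \gamma$ and $\|y_0\|_H + \|f\|_{L^2(0, T; V')} \le \rho(T)$) serve two distinct roles.

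Next I would apply Theorem~\ref{thm:Convergence} at $\alpha = 0$, where its proof records $\tilde{c}_{\lambda, \alpha} = c_2 = 1$ and $\tilde{y}_0 = y_0$, $\tilde{y}_e = y_e$, yielding
\begin{align*}
\|\eta\|_{L^\infty(0, T; H)} + \|\eta\|_{W(0, T; V, V')} \le c_1 \sqrt{N + 1}\left(\exp(\lambda T)\left(\|y_0\|_H + \|y_e\|_{W(0, T; V, V')}\right)\right)^{N + 1}.
\end{align*}
It then remains to trade the dependence on $\|y_e\|_{W(0, T; V, V')}$ for the data. I would insert the estimate of Lemma~\ref{lem:WellPosednessNonlinear},
\begin{align*}
\|y_0\|_H + \|y_e\|_{W(0, T; V, V')} \le 2 c_N \exp(\lambda T)\left(\|y_0\|_H + \|f\|_{L^2(0, T; V')}\right),
\end{align*}
into the base of the power above. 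The outer factor $\exp(\lambda T)$ then combines with the exponential from this estimate into $\exp(2\lambda T)$, giving precisely the claimed bound with constant $2 c_N$ inside the parentheses.

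The case $\lambda = 0$, $T = \infty$ is inherited directly, since both Theorem~\ref{thm:Convergence} and Lemma~\ref{lem:WellPosednessNonlinear} admit $T = \infty$ when $\lambda = 0$. There is no genuine obstacle here: the content is a careful matching of hypotheses and the substitution of one a priori estimate into another. The only points needing attention are the implication \eqref{assumption:BBounded} $\Rightarrow$ \eqref{assumption:BBilinearMapping} at $\alpha = 0$, the collapse of the elevated-regularity requirement to the standard class, and the constant bookkeeping ($\tilde{c}_{\lambda, \alpha} = c_2 = 1$) that renders the final constants clean.
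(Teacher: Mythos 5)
Your proposal is correct and follows exactly the paper's route: the paper's own proof is the one-line observation that \eqref{assumption:BBounded} implies \eqref{assumption:BBilinearMapping} for $\alpha = 0$, so the claim follows by combining Theorem~\ref{thm:Convergence} with the a priori estimate of Lemma~\ref{lem:WellPosednessNonlinear}, which is precisely what you do (with more explicit bookkeeping of the constants $\tilde{c}_{\lambda,\alpha} = c_2 = 1$ and the merging of the two $\exp(\lambda T)$ factors into $\exp(2\lambda T)$). No gaps.
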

\begin{proof}
Since assumption \eqref{assumption:BBounded} implies \eqref{assumption:BBilinearMapping} for $\alpha = 0$, the statement follows from Theorem~\ref{thm:Convergence} and Lemma~\ref{lem:WellPosednessNonlinear}.
\end{proof}

\subsection{Assumptions and Implications of the Result}

Subsequently, we interpret the assumptions of Theorem~\ref{thm:Convergence} and Corollary~\ref{corollary:ConergenceAlphaZero}, and highlight their influence on the linearization's performance.

First, consider the assumptions $c_{\mathcal{P}} < \gamma$ and $\|y_0\|_H + \|f\|_{L^2(0, T; V')} \le \rho(T)$. The nonlinear operator $B$ is assumed to fulfill \eqref{assumption:BBilinearMapping}. The constant $c_{\mathcal{B}}(\alpha, 0)$ is bounded by $c_{B}(\alpha, \varepsilon)$, in particular, there is a $\mu > 0$ such that $c_{\mathcal{B}}(\alpha, \varepsilon) \le \mu c_B(\alpha, \varepsilon)$. The constant~$c_{B}(\alpha, \varepsilon)$, in turn, determines the size of the nonlinearity $B$. This becomes apparent if we assume that $B$ is given as a scaled nonlinear term $B_0$, i.e., $B = \omega B_0$ with $\omega \in \mathbb{R}$. Then, one can choose $c_{B}(\alpha, \varepsilon) = \omega c_{B_0}(\alpha, \varepsilon)$. For example, in fluid flow problems, $B$ might represent a convection term, and $\omega$ the Reynolds number. What this means for Theorem~\ref{thm:Convergence} is that $\omega$ and $f$ have to be chosen sufficiently small that $c_{\mathcal{P}} < \gamma$, i.e., the Carleman linearization is only admissible for small nonlinearities and forcing. The coercivity constant $\gamma$ can be interpreted as a measure of the stability of the operator $A$. Hence, more stable linear parts allow for larger nonlinearities. Similar implications hold for the assumption $\|y_0\|_H + \|f\|_{L^2(0, T; V')} \le \rho(T)$ in Corollary~\ref{corollary:ConergenceAlphaZero}. The constant $\rho(T)$ is inversely proportional to $c_B(\alpha, 0)$, and, by extension, to $\omega$. This means that the set of admissible $y_0$ and $f$ restricted by this assumption grows as the nonlinearity gets smaller. Moreover, if $\lambda > 0$, i.e., if the linear part is asymptotically unstable, it holds that $\rho(T) \to 0$ exponentially in $T$. So, the convergence result can only be applied on finite time horizons if $\lambda > 0$.

Turning to the error bound of Theorem~\ref{thm:Convergence}, we observe that convergence of the approximate solution is guaranteed if and only if
\begin{align*}
    c_2 \exp(\lambda T)\left( \|y_0\|_{V^\alpha} + \| y_e\|_{W(0, T; V^{1 + \alpha}, V^{-1 + \alpha})}\right) < 1.
\end{align*}
Since $\lambda \ge 0$, this necessitates for the solution to fulfill $c_2 (\|y_0\|_{V^\alpha} + \| y_e\|_{W(0, T; V^{1 + \alpha}, V^{-1 + \alpha})}) < 1$. On the one hand, this enforces an upper bound on the initial condition. On the other hand, it restricts the size of the exact solution, which entails multiple restrictions on our dynamical system. Since $\| y_e\|_{W(0, T; V^{1 + \alpha}, V^{-1 + \alpha})} \to 0$ as $T \to 0$, the condition might give an upper bound on feasible $T$ and larger time horizons also lead to poorer approximation properties of the linearization. This part of the estimate, however, does not necessarily lead to an upper bound of feasible $T$ in general since the solution $y_e$ can be bounded on the unbounded time horizon $[0, \infty)$ in the case $\lambda = 0$. Additionally, it is suggested that larger nonlinearities may lead to solutions $y_e$ with larger norms, resulting in poorer convergence or leaving one without a guarantee of convergence at all. Taking into account how Corollary~\ref{corollary:ConergenceAlphaZero} relates the norm of the solution to the forcing, the condition for convergence reads as
\begin{align*}
c_2 \exp(\lambda T) \left(\|y_0\|_H + \|f\|_{L^2(0, T; V')}\right) < 1
\end{align*}
and, therefore, also implies an upper bound on $f$. In the unstable case $\lambda > 0$, we have that convergence is only guaranteed for certain bounded time horizons with fixed $y_0$ and $f$.

These observations coincide with what is generally known about the linearization in the finite-dimensional case. Theorem~\ref{thm:Convergence} can be seen as an equivalent result to Theorem~4.2 in \cite{Forets2017ExplicitLinearization} without forcing and Corollary~\ref{corollary:ConergenceAlphaZero} to Theorems~3.2, 3.5, and 3.6 in \cite{Amini2025CarlemanApproximations} including the case of forcing. A notable difference is that our estimates include an additional factor $\sqrt{N + 1}$, which means that our results only show (sub-)exponential convergence.

\begin{remark}
The result from Theorem~\ref{thm:Convergence} and Corollary~\ref{corollary:ConergenceAlphaZero} is designed for the truncated Carleman linearization, i.e., $y_{N + 1} = 0$ is assumed in the chain of moment equations. As we have seen, there is an upper bound for admissible norms of $\|y_0\|$, and the linearization does not converge even locally in time if the initial condition is too large. However, if an estimate $\hat{y}_{N + 1}(t) = \bigotimes^{N + 1} \hat{y}(t) \approx y_{N + 1}(t)$ is available, we can refine the linearization and the corresponding error bound. This estimate can be incorporated into the linearization by changing the right-hand side to $\hat{f}_N(t) = (f(t), 0, \dots, 0, -B_N \hat{y}(t))^T$. By adapting Theorem~\ref{thm:Convergence} for this case, we end up with
\begin{align*}
    &\| \eta \|_{L^\infty(0, T; V^\alpha)} + \|\eta\|_{W(0, T; V^{1 + \alpha}, V^{-1 + \alpha})} \\
    &\quad\le c_1 \sqrt{N + 1} \left(c_2 \exp(\lambda T)\left( \|y_0 - \hat{y}(0)\|_{V^\alpha} + \| y_e - \hat{y}\|_{W(0, T; V^{1 + \alpha}, V^{-1 + \alpha})}\right) \right)^{N + 1}.
\end{align*}
Hence, if a guess $\hat{y}$ is close enough to $y_e$, we achieve convergence. Furthermore, this implies that if the assumptions of Theorem~\ref{thm:Convergence} are fulfilled and $\hat{y}(0) = y_0$, then there exists a (sufficiently small) $T > 0$ such that the linearization converges. This provides a local-in-time convergence result for arbitrarily large data $y_0$ and $f$.
\end{remark}

\subsection{Examples of Operators $A$ and $B$}
We examine specific cases of PDE problems where the assumptions~\eqref{assumption:BBounded} and \eqref{assumption:BBilinearMapping} on $B$ are fulfilled. We assume that $A$ is a second-order elliptic differential operator on a bounded Lipschitz domain $\Omega \in \mathbb{R}^d$ for $d \in \mathbb{N}$. We restrict ourselves to the case of homogeneous Dirichlet boundary conditions and, therefore, choose $D(A) = H^2(\Omega) \cap H^1_0(\Omega)$ and $H = L^2(\Omega)$, or vectorized versions of them. In this context, $L = -\Delta + I$ with $\bar{\alpha} = 2$ serves as a suitable choice, and $V = H^1_0(\Omega)$. Consequently, the $V$-norm is equivalent to the standard $H^1(\Omega)$-norm.

\subsubsection{Zeroth-order derivatives}
First, we consider the case of $B$ being a zeroth-order quadratic term, i.e., of the form
\begin{align*}
    B(u, v)(x) := \psi(x) u(x) v(x)
\end{align*}
for some $\psi \in L^\infty(\Omega)$. From Ladyzhenskaya's inequality, we know that $H^1(\Omega)$ is continuously embedded in $L^4(\Omega)$ for dimensions $d \le 2$. So, there is a constant $c_{LA}$ such that
\begin{align*}
    \| u \|_{L^4(\Omega)} \le c_{LA} \| u \|_{L^2(\Omega)}^{\frac{1}{2}} \| u \|_{H^1(\Omega)}^{\frac{1}{2}}
\end{align*}
for all $u \in H^1(\Omega)$. Thus, for $d \le 2$
\begin{align*}
    \left|\langle B(u, v), w \rangle_V\right| &\le \|\psi\|_{L^\infty(\Omega)}\|u\|_{L^4(\Omega)} \|v\|_{L^4(\Omega)} \| w \|_{L^2(\Omega)} \\
    &\le c_{LA}^2 \|\psi\|_{L^\infty(\Omega)} \| u \|_{L^2(\Omega)}^{\frac{1}{2}} \| u \|_{H^1(\Omega)}^{\frac{1}{2}} \| v \|_{L^2(\Omega)}^{\frac{1}{2}} \| v \|_{H^1(\Omega)}^{\frac{1}{2}} \|w \|_{L^2(\Omega)}
\end{align*}
for all $u, v \in V$ and $w \in H$.
This proves the required bound for \eqref{assumption:BBounded} and \eqref{assumption:BBilinearMapping} for $\alpha = 0$ and $\varepsilon \in [0, 1]$. In the case $d = 3$, we can prove \eqref{assumption:BBounded}. Using the Gagliardo--Nirenberg interpolation inequality in bounded domains
\begin{align*}
    \| u \|_{L^3(\Omega)} \le c_{GN} \|u\|_{L^2(\Omega)}^{\frac{1}{2}} \|u\|_{H^1(\Omega)}^{\frac{1}{2}},
\end{align*}
we obtain the estimate
\begin{align*}
    \left|\langle B(u, v), w \rangle_V\right| &\le \|\psi\|_{L^\infty(\Omega)}\|u\|_{L^3(\Omega)} \|v\|_{L^3(\Omega)} \| w \|_{L^3(\Omega)} \\
    &\le c_{GN}^3 \|\psi\|_{L^\infty(\Omega)} \| u \|_{L^2(\Omega)}^{\frac{1}{2}} \| u \|_{H^1(\Omega)}^{\frac{1}{2}} \| v \|_{L^2(\Omega)}^{\frac{1}{2}} \| v \|_{H^1(\Omega)}^{\frac{1}{2}} \|w \|_{H^1(\Omega)}
\end{align*}
for all $u, v, w \in V$. Thus, assumption \eqref{assumption:BBounded} is fulfilled for $d = 3$, and, therefore, also \eqref{assumption:BBilinearMapping} with $\alpha = 0$ and $\varepsilon = 0$; however, it does not hold for $\varepsilon = 1$ in general.

\subsubsection{Fluid flow problems}
\label{sec:FlowProblems}
In the case of the incompressible Navier--Stokes equation, the original coupled problem can be rephrased as a parabolic Cauchy problem with a quadratic nonlinearity by the use of the Leray projection $P$ and by restricting $H$ to solenoidal vector fields (see, e.g., \cite{Barbu2011StabilizationFlows}), and, therefore, fits our framework. Then, the nonlinear term reads as
\begin{align*}
    B(u, v) = \frac{1}{2}\left(\widehat{B}(u, v) + \widehat{B}(v, u)\right), \quad \widehat{B}(u, v) = P(u \cdot \nabla v).
\end{align*}
In this case, assumption \eqref{assumption:BBilinearMapping} holds for $\alpha > d/2 - 1$ and $0 \le \varepsilon < \alpha - d/2 + 1$, cf.~\cite{Fursikov1993MomentSide}. In the case of $d = 1,2$, assumption \eqref{assumption:BBounded} is fulfilled; see, e.g., \cite{Breiten2019FeedbackApproximation,Temam1979Navier-StokesAnalysis}. While this implies \eqref{assumption:BBilinearMapping} for $\alpha = 0$ and $\varepsilon = 0$ for $d = 1, 2$, it does not guarantee this property for $\varepsilon = 1$. This means that Theorem~\ref{thm:WellPosedness} cannot be applied to show well-posedness, but one has to use Theorem~\ref{thm:Convergence} coupled with assumptions of small nonlinearities, forcings, and initial conditions. This example underscores the importance of carefully examining the various assumptions on $B$.

% Furthermore, the assumptions may be shown for more specific models as well. For example, the nonlinear term of the shallow water equations is given by
% \begin{align*}
%     B((u, h), (v, g)) &= \frac{1}{2} \left( \widehat{B}((u, h), (v, g)) + \widehat{B}((v, g)), (u, h) \right), \\
%     \widehat{B}((u, h), (v, g)) &= \left( \nabla \cdot (h v), u \cdot \nabla v \right)^T.
% \end{align*}
% By applying the divergence theorem and incorporating the homogeneous boundary conditions, it can be shown that this fulfills \eqref{assumption:BBounded}, as well as \eqref{assumption:BBilinearMapping} for $\alpha = 0$ and $\varepsilon = 0$ in dimensions $d = 1, 2$.

\subsubsection{Nonlocal interaction terms}
\label{sec:InteractionTerm}
Some problems modelled by PDEs can involve nonlocal and possibly nonlinear terms. Examples of such problems can be found in multiscale particle dynamics. Our framework also covers nonlocal nonlinear problems including low-order derivatives. For example, the so-called dynamic density functional theory \cite{Marconi2000DynamicFluids,Chan2005Time-DependentFluids} tackles particle dynamics problems by approximating them by a PDE with a nonlocal term of the form
\begin{align*}
    B(u, v) = \frac{1}{2}\left( \widehat{B}(u, v) + \widehat{B}(v, u) \right), \quad \widehat{B}(u, v)(x) = \nabla_x \cdot \int_\Omega u(x) v(x') K(x, x') \mathrm{d}x'
\end{align*}
for some vector-valued kernel function $K(x, x')$. If we assume that $K \in (L^\infty(\Omega \times \Omega))^d$ and $\nabla_x \cdot K \in L^\infty(\Omega \times \Omega)$, we can show
{\allowdisplaybreaks
\begin{align*}
    \left|\langle \widehat{B}(u, v), w\rangle_V\right| &= \left|\int_\Omega w(x) \nabla_x \cdot \int_\Omega u(x) v(x') K(x, x') \mathrm{d}x' \mathrm{d}x \right| \\
    &= \left| \int_\Omega \int_\Omega \left(\nabla_x w(x)\right) u(x) v(x') K(x, x') \mathrm{d}x' \mathrm{d}x \right| \\
    &\le \| K \|_{L^\infty(\Omega \times \Omega)} \| u \|_{L^2(\Omega)} \| v \|_{L^1(\Omega)} \|w \|_{H^1(\Omega)}
\end{align*}
}%
by using integration by parts combined with the homogeneous boundary conditions. So, $B$ fulfills \eqref{assumption:BBounded} and, therefore, also \eqref{assumption:BBilinearMapping} for $\alpha = 0$ and $\varepsilon = 0$. Furthermore, we obtain that
\begin{align*}
    &\left| \langle \widehat{B}(u, v), w\rangle_V\right| = \left| \int_\Omega \int_\Omega w(x) v(x') \left[ u(x) \nabla \cdot K(x, x') + \left(\nabla u(x)\right) \cdot K(x, x') \right] \mathrm{d}x' \mathrm{d}x \right| \\
    &\quad\le \int_\Omega \int_\Omega \left[\left|w(x) v(x') u(x) \nabla \cdot K(x, x')\right| + \left|w(x) v(x') \left(\nabla u(x)\right) \cdot K(x, x') \right| \right] \mathrm{d}x' \mathrm{d}x \\
    &\quad\le \left( \| \nabla \cdot K \|_{L^\infty(\Omega \times \Omega)} + \|K \|_{(L^\infty(\Omega \times \Omega))^d} \right) \left( \|u\|_{L^2(\Omega)} \|v\|_{H^1(\Omega)} + \|u\|_{H^1(\Omega)} \|v\|_{L^2(\Omega)} \right) \|w\|_{L^2(\Omega)},
\end{align*}
which implies \eqref{assumption:BBilinearMapping} for $\alpha = 0$ and $\varepsilon = 1$. It is noted that this holds true for any dimension $d \in \mathbb{N}$.

\section{Numerical Approximations}
\label{sec:NumericalMethods}
Since we consider infinite-dimensional Hilbert spaces $H$, the presented parabolic Cauchy problems stemming from the Carleman linearization cannot be solved exactly in general. To address this, one must first discretize the equations and compute a numerical approximation. As we will see, the convergence analysis in Section~\ref{sec:Convergence} not only improves the understanding of the Carleman linearization for infinite-dimensional problems in a continuous setting but also clarifies its discretized counterpart. By pursuing a so-called linearize-then-discretize approach, the error analysis of the truncation is decoupled from the discretization error. This substantially differentiates our approach from traditional \textit{discretize-then-linearize} methods, which analyze the Carleman linearization of a finite-dimensional approximation of the original Cauchy problem. Additionally, the linearize-then-discretize approach will enable alternative discretization approaches.

As shown before, the truncated Carleman linearization results in a parabolic problem. The numerical solution of parabolic problems is broadly covered in the literature; see, e.g., \cite{Zeidler1990II/Operators,Thomee2006GalerkinProblems}. Many discretization techniques can be organized into two separate topics: the spatial and the temporal discretization.
In this section, we focus only on the spatial discretization, leaving aside the discussion of temporal discretizations as they are not relevant to this paper. Specifically, we analyze the so-called semi-discrete problem, which is obtained by only approximating $H$ but keeping the problem continuous in time. Subsequently, we outline the fundamental implications for the semi-discrete Cauchy problem based on the obtained results on the Carleman linearization.

We choose to use a conforming Galerkin method for the spatial discretization; see, e.g., \cite{Thomee2006GalerkinProblems}. It is assumed that all assumptions of Corollary~\ref{corollary:ConergenceAlphaZero} are fulfilled. Let $U_h(N) \subset U_1^0(N)$ be a family of finite-dimensional subspaces of $U_1^0(N)$ with the property
\begin{align*}
\label{assumption:GalerkinApproximationProperty}
\refstepcounter{assumptionCounter}
\tag{A\theassumptionCounter}
\inf_{u_h \in U_h(N)}\| u_h - u\|_{Y(N)} \to 0\quad\text{as} \quad h \to 0 \quad \text{for all } u \in U_1^0(N).
\end{align*}
In the case of Galerkin finite element methods for PDEs, $h$ denotes the maximum cell size, for instance. We then seek a function $y_{N, h} \in W(0, T; U_h(N), (U_h(N))')$ such that \eqref{eq:WeakTruncatedCarleman} is fulfilled, in which we only test against functions in $U_h(N)$. This equation is referred to as the semi-discretization of \eqref{eq:WeakTruncatedCarleman}.
The (semi-)discretization leads to an additional error source in our linearization. This becomes apparent when considering
\begin{align*}
\| y_{N, h}^{(1)} - y_e \|_{L^\infty(0, T; H)} &\le \| y_{N}^{(1)} - y_e \|_{L^\infty(0, T; H)} + \|y_{N, h}^{(1)} - y_{N}^{(1)}\|_{L^\infty(0, T; H)} \\
&\le \| y_{N}^{(1)} - y_e \|_{L^\infty(0, T; H)} + \|y_{N, h} - y_{N}\|_{L^\infty(0, T; Y(N))}.
\end{align*}
The first term of the right-hand side, the linearization error, is analyzed in Corollary~\ref{corollary:ConergenceAlphaZero}, while the second term, the discretization error, purely measures the approximation error of the Galerkin discretization. We thereby established a separation of the two error sources. The same observation remains valid if the norms $L^\infty(0, T; H)$ and $L^\infty(0, T; Y(N))$ are replaced by the norms $L^2(0, T; V)$ and $L^2(0, T; U_1^0(N))$, respectively.
The following lemma is a variation of Theorem 23.A in \cite{Zeidler1990II/Operators} and can be found in various standard references on the topic.
\begin{lemma}
Let all assumptions of Corollary~\ref{corollary:ConergenceAlphaZero} be fulfilled. Moreover, let $N \in \mathbb{N}$ be fixed and $U_h(N) \subset U_1^0(N)$ be a family of subspaces fulfilling \eqref{assumption:GalerkinApproximationProperty}. Let $y_N \in W(0, T; U_1^0(N), U_{-1}^0(N))$ be the solution to \eqref{eq:WeakTruncatedCarleman} and $y_{N, h} \in W(0, T; U_h(N), (U_h(N))')$ its Galerkin approximation. Then, it holds that
\begin{align*}
    \| y_N(t) - y_{N, h}(t)\|_{L^\infty(0, T; Y(N))} &\to 0, \\
    \|y_N - y_{N, h}\|_{L^2(0, T; U_1^0(N))} &\to 0,
\end{align*}
as $h \to 0$.
\end{lemma}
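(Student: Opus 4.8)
The plan is to regard \eqref{eq:WeakTruncatedCarleman} as an abstract linear parabolic equation on the Gelfand triple $(U_1^0(N), Y(N), U_{-1}^0(N))$ and to apply the classical convergence theory for conforming internal (Galerkin) approximations of such equations. First I would check that all structural hypotheses are in place. Under the assumptions of Corollary~\ref{corollary:ConergenceAlphaZero} one has $\alpha = 0$, so $\mathcal{A}_N = \widetilde{\mathcal{A}}_N$, and Proposition~\ref{prop:ImprovedCoercivity} (with $\varepsilon = 0$) shows that the family $\mathcal{A}_N(t)$ is bounded with constant $\beta + c_{\mathcal{P}}$ and $U_1^0(N)$-$Y(N)$ coercive with constant $\gamma - c_{\mathcal{P}} > 0$ and shift $N\lambda$, uniformly in $t$; moreover $f_N \in L^2(0,T;U_{-1}^0(N))$ and $y_{N,0} \in Y(N)$. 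Since $U_h(N) \subset U_1^0(N)$ is conforming, boundedness and coercivity transfer verbatim to the subspace, so the Galerkin problem is a linear system of ordinary differential equations with an invertible mass matrix and therefore possesses a unique solution $y_{N,h} \in W(0,T;U_h(N),(U_h(N))')$.

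The second step is the uniform a priori estimate. Testing the discrete equation with $y_{N,h}(t) \in U_h(N)$ and using discrete coercivity reproduces, at the discrete level, the energy estimate \eqref{eq:ParabolicEstimate} of Lemma~\ref{lem:WellPosednessLinear}; this yields a bound $\|y_{N,h}\|_{L^\infty(0,T;Y(N))} + \|y_{N,h}\|_{L^2(0,T;U_1^0(N))} \le C$ with $C$ independent of $h$, together with a bound on $y_{N,h}'$ in $L^2(0,T;U_{-1}^0(N))$ obtained from the equation and the boundedness of $\mathcal{A}_N$ (composed with the $Y(N)$-orthogonal projection onto $U_h(N)$). By reflexivity I would extract a subsequence with $y_{N,h} \rightharpoonup w$ in $L^2(0,T;U_1^0(N))$, weak-$*$ in $L^\infty(0,T;Y(N))$, and $y_{N,h}' \rightharpoonup w'$ in $L^2(0,T;U_{-1}^0(N))$. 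Passing to the limit in the Galerkin equation and invoking the density of $\bigcup_h U_h(N)$ furnished by \eqref{assumption:GalerkinApproximationProperty}, together with convergence of the initial data in $Y(N)$, shows that $w$ solves \eqref{eq:WeakTruncatedCarleman}. By the uniqueness part of Theorem~\ref{thm:WellPosedness} we get $w = y_N$, and since the limit does not depend on the subsequence the whole family converges weakly.

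The main obstacle is upgrading this weak convergence to the strong convergence asserted in the two displayed limits. I would argue by the energy method: writing the energy identities satisfied by $y_N$ and by $y_{N,h}$ (each obtained by testing with the solution itself) and combining weak lower semicontinuity of the norms with an upper bound on $\limsup_{h\to 0}$ of the discrete energy---the latter obtained by testing the error equation against $R_h y_N - y_{N,h}$, where $R_h y_N \in U_h(N)$ is a near-best approximation of $y_N$ supplied by \eqref{assumption:GalerkinApproximationProperty}---to conclude that the energy norms converge; convergence of norms plus weak convergence then gives strong convergence in $L^2(0,T;U_1^0(N))$. The $L^\infty(0,T;Y(N))$ statement follows by applying the parabolic estimate \eqref{eq:ParabolicEstimate} and a Gr\"onwall argument to the error equation, whose right-hand side is controlled by the (vanishing) best-approximation error, together with the embedding $W(0,T;U_1^0(N),U_{-1}^0(N)) \hookrightarrow C([0,T];Y(N))$. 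The genuinely delicate point is that \eqref{assumption:GalerkinApproximationProperty} only provides approximation in the $Y(N)$-norm, whereas the energy argument needs the near-best approximations $R_h y_N$ to converge in the stronger $U_1^0(N)$-norm; closing this gap requires exploiting the regularity $y_N \in W(0,T;U_1^0(N),U_{-1}^0(N))$ and the structure of the conforming family $U_h(N)$, and it is here that the bulk of the technical work resides.
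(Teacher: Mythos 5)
Your overall strategy coincides with the paper's: the paper does not actually prove this lemma, but simply remarks that it is a variation of Theorem~23.A in \cite{Zeidler1990II/Operators}, i.e., the classical convergence theorem for conforming Galerkin approximations of linear parabolic equations on a Gelfand triple. What you have written is essentially a reconstruction of the proof of that theorem: uniform boundedness and coercivity of $\mathcal{A}_N(t)$ from Proposition~\ref{prop:ImprovedCoercivity} (correctly noting that $\alpha = 0$ under the hypotheses of Corollary~\ref{corollary:ConergenceAlphaZero}, so no lifting is needed), $h$-uniform energy estimates, weak compactness and identification of the limit via uniqueness, and a norm-convergence argument to upgrade to strong convergence. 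These steps are all sound and are exactly what the cited theorem packages.

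The one point you flag but do not resolve is a genuine issue, and it is worth locating precisely. Assumption \eqref{assumption:GalerkinApproximationProperty} only asserts approximation in the $Y(N)$-norm, which is strictly weaker than density of $\bigcup_h U_h(N)$ in the $U_1^0(N)$-norm: one can construct conforming families whose union is dense in $Y(N)$ while its $U_1^0(N)$-closure is a proper closed subspace $W$ (take the kernel of a functional continuous on $U_1^0(N)$ but not on $Y(N)$), in which case the Galerkin solutions converge to the solution of the problem posed on $W$ rather than to $y_N$, and the asserted $L^2(0,T;U_1^0(N))$-convergence fails. So the gap cannot be closed by ``exploiting regularity'' alone; the hypothesis must be read---as Zeidler's Theorem~23.A requires, and as does hold for the tensor-product and sparse-grid finite element spaces the paper actually uses---as an approximation property in the energy norm $U_1^0(N)$. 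With that reading, your limsup-of-energies argument with the quasi-interpolant $R_h y_N$ closes in the standard fashion and the proof is complete; without it, both your argument and the literal statement of the lemma fall short.
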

Hence, the assumptions ensure that the discretization error vanishes as $h \to 0$ for a fixed $N$. Informally, this means that the discretized Carleman linearization converges to the exact solution as $N \to \infty$ and $h \to 0$ simultaneously. However, careful selection of $h$ is crucial for ensuring overall convergence behavior. As $N$ increases, the dimensionality of problem \eqref{eq:WeakTruncatedCarleman} grows, possibly detoriating the discretization's approximation properties due to the curse of dimensionality. Although these factors strongly depend on the specific discretization method, this suggests that one has to choose $h$ such that it decreases sufficiently fast as a function of $N$ to gain overall convergence.

It remains an open question how one can choose $U_h(N)$ appropriately. The subsequent sections present a standard choice of discretization but also initiate the idea for nonstandard approaches that exploit the structure of the linearization.

\subsection{Standard Discretization}
We begin by considering a discretization that intuitively mirrors the construction of the spaces $V_1^0(k)$ in their discrete form. Let $V_h \subset V$ be a family of subspaces that fulfills
\begin{align*}
    \inf_{v_h \in V_h} \|v - v_h\|_H \to 0 \quad \text{as} \quad h \to 0 \quad \text{for all } v \in V.
\end{align*}
Then, we construct an approximation $V_h(k) \subset V_1^0(k)$ by assembling the tensor products of $V_h$, i.e., $V_h(k) = \bigotimes^k V_h$. This suggests $U_h(N) = \bigoplus^N_{k = 1} V_h(k)$. It can be shown that this approximation space fulfills \eqref{assumption:GalerkinApproximationProperty}. We refer to this discretization as the standard discretization since it coincides with the system one would obtain from a discretize-then-linearize approach.

As opposed to existing analyses of the discretize-then-linearize approach, our convergence radii and rates depend on $y_0$, $f$, and $B$ but are independent of $h$. In some instances, similar results can be achieved using the discretize-then-linear approach; see, e.g., \cite{Liu2023EfficientEstimation} for diffusion equations with nonlinear reaction terms. However, for problems in which $B$ includes derivatives, like the problems in Sections~\ref{sec:FlowProblems} and \ref{sec:InteractionTerm}, such an approach fails. This is due to the diverging scaling behaviors of the norms of the discretized operators $A_h$ and $B_h$ with respect to $h$. In particular, for fluid flow problems, the discrete coercivity constant $\gamma_h$ of $A_h$ remains $\mathcal{O}(1)$ but $\|B_h\| = \mathcal{O}(h^{-1})$ as $h \to 0$. This discrepancy ultimately results in a violation of the condition $c_{\mathcal{P}} < \gamma$ in Corollary~\ref{corollary:ConergenceAlphaZero} if $h$ is chosen small enough. For the same reason, the error estimates of \cite{Forets2017ExplicitLinearization,Amini2025CarlemanApproximations} are compromised as $h \to 0$ and do not provide a theoretical guarantee for convergence for small $h$. This was observed in, e.g., \cite{Gonzalez-Conde2025QuantumDynamics}. By isolating the truncation error from the discretization error in our analysis, we bridge this knowledge gap and enhance the understanding of the convergence behavior of the linearization for general infinite-dimensional systems.

\subsection{Non-Standard Discretizations}
A significant drawback of the standard Carleman linearization is that it suffers from the curse of dimensionality. Using the approximation space $V_h(k)$ from the previous section leads to an exponential scaling of the degrees of freedom of $U_h(N)$ in $N$.
This scaling is already problematic for finite-dimensional Cauchy problems of low dimension in combination with moderate regimes of $N$, but depending on the specific dynamical system, this effect can become more severe in the case of infinite-dimensional equations. For PDEs, for example, relatively small models can lead to dimensions in the order of $\operatorname{dim}V_h = \mathcal{O}(10^5)$ to reach satisfactory accuracy, and therefore $\operatorname{dim} U_h(N) = \mathcal{O}(10^{5N})$. Consequently, the Carleman linearization might not be numerically tractable even for $N = 2$.

Instead of using the tensor product spaces, we have more freedom in choosing different approximation spaces for each $k$. High-dimensional equations that involve tensor product and Kronecker sum structures, similar to those found in the Carleman linearization, have been extensively studied. This research provides a wide range of efficient methods specifically designed to address the curse of dimensionality. We refer to the application of such methods as non-standard discretizations.
% These methods often take advantage of the low-rank approximability of the solution.
To demonstrate the flexibility and potential of the linearize-then-discretize approach for non-standard discretizations, we focus on one specific method known as \textit{sparse grids} or \textit{sparse tensor product spaces}. For more information on the general theory and approximation properties of sparse grids; see \cite{Bungartz2004SparseGrids,Garcke2013SparseNutshell,Hochmuth2000TensorApplications,Griebel2012OnSpaces}. For their application to PDEs and Galerkin methods, we refer to \cite{Bungartz1998SparseEquations,Hoang2005High-dimensionalScales}. In addition to showcasing the capability of non-standard discretizations, the use of sparse grids will enable numerical experiments on otherwise intractable problems.

Similarly to the previous discretization, we start with a discretization of $V$ and, additionally, assume that it exhibits a nested sequence of spaces
\begin{align*}
    V_{h_1} \subset V_{h_2} \subset \cdots \subset V_{h_j} \subset \cdots \subset V
\end{align*}
for the decreasing sequence $h_j = 2^{-j}$ for $j \in \mathbb{N}$. Such a hierarchy can, for example, arise from successive mesh refinement when working with PDEs. For the sake of readability, we write $V_j := V_{h_j}$ for indices $j$. It is assumed that the approximation error decreases at the rate
\begin{align*}
    \inf_{v_h \in V_{j}} \| v - v_h\|_H \le c h_{j} \| v \|_{V}
\end{align*}
for some $c > 0$, and the dimensions of the spaces scale as $\dim V_j \sim h_j^{-d}$ for some $d \in \mathbb{N}$.
If we now want to roll out $V_{J}$ for some fixed $J \in \mathbb{N}$ to a discretization of $V_1^0(k)$ and follow the same procedure as in the standard discretization, we would arrive at an exponentially growing space $V_{J}(k)$. Sparse grids, however, suggest that if a function $v \in V_1^0(k)$ exhibits increased regularity of the kind $v \in V^1_0(k)$, a large portion of the elements of $V_{J}(k)$ can be neglected without a significant loss in accuracy. In particular, within such a method one chooses the approximation space
\begin{align*}
    \widehat{V}_{J}(k) := \operatorname{span}\left\{\bigcup_{\vec{j} \in \mathbb{N}^k, |\vec{j}|_1\le J} \bigotimes_{i = 1}^k V_{\vec{j}_i}\right\}.
\end{align*}%
This means that instead of coupling the finest discretization $V_J$ in each dimension within the tensor product, we only couple fine discretizations with coarser ones. This translates to the condition $|\vec{j}|_1 \le J$. The dimensionality of $\widehat{V}_J(k)$ can be bounded by $\operatorname{dim} \widehat{V}_J(k) \lesssim h_J^d J^{k - 1}$ as opposed to $\operatorname{dim} V_J(k) \sim h_J^{kd}$. Finally, we choose $\widehat{U}_h(N) := \bigoplus_{k = 1}^N \widehat{V}_J(k)$.

The additional regularity assumptions can be justified through the following consideration. If we assume that the assumptions of Corollary~\ref{corollary:ConergenceAlphaZero} are fulfilled and also that those of Theorem~\ref{thm:Convergence} hold for $\alpha = 1$, then the unique solution has regularirty $y_N \in W(0, T; V_1^1(N), V_{-1}^1(N)) \subseteq C(0, T; V_0^1(N))$. Increased regularity holds for larger values of $\alpha$, which can be complemented with higher-order sparse grid methods as well.

Another example of structure-exploiting methods is the tensor train decomposition \cite{Oseledets2011Tensor-trainDecomposition}. This method has been successfully applied to problems of Kronecker product form to achieve low-rank approximations. For example, if the underlying nonlinear Cauchy problem is a parabolic PDE, the Carleman linearization describes a system of high-dimensional PDEs. This becomes apparent if we take the example of $H = L^2(\Omega)$ and $V = H^1_0(\Omega)$. Then, it holds that $H(k) = L^2(\Omega^k)$ and $V_1^0(k) = H^1_0(\Omega^k)$, i.e., the tensorized spaces are equivalent to Sobolev spaces in higher-dimensional domains. A survey on the efficacy of tensor train decompositions for high-dimensional PDEs can be found in \cite{Khoromskij2015TensorApplications}. A detailed study of this and other non-standard discretizations would exceed the scope of this paper and is a subject of future research.

% ------
% The order of convergence is a different topic: In certain cases, such as parabolic PDEs, a certain order of convergence can be shown (reference https://epubs.siam.org/doi/10.1137/S0036142900377991). But in most cases it is necessary to assume certain regularity for this. For example, use approximation property with projection and Riesz approximation -> we get nice coefficients for that
% Only comment on Cea's lemma: properties of Riesz operator are nice

\section{Numerical Experiments}
\label{sec:NumericalExperiments}
% Overview of what we want to show
We now verify our theoretical findings with a series of numerical experiments. These experiments include a qualitative analysis of the recovery of nonlinear behavior via the linearization, as well as measuring the convergence properties reflecting the theoretical bounds. Additionally, we present the advantages of the non-standard discretization method employed. Although our primary focus is on a second-order parabolic PDE, it is important to note that the framework we propose is versatile and applicable to a broader range of equations.

The methods are implemented in Python. For the discretization of PDEs, the high-level finite element method library \texttt{DOLFINx} \cite{Baratta2023DOLFINx:Environment} is used. The storage and computation of dense and sparse tensors are done with the tensor compiler suite \texttt{taco}~\cite{Kjolstad2017TheCompiler}. The library \texttt{petsc4py} \cite{Dalcin2011ParallelPython} is used for various numerical linear algebra operations.

\subsection{Burgers' Equation and Discretization}
As a model problem, we consider the one-dimen\-sional Burgers' equation extended by a destabilizing linear term. The PDE in its strong form is given by
\begin{equation*}
\begin{alignedat}{3}
    y'(t, x) - \nu\Delta y(t, x) - \lambda y(t, x) + y(t, x) \frac{\partial}{\partial x} y(t, x) &= f(t, x) \quad &&\text{ for } t \in [0, T) \text{ and } x \in [-1, 1], \\
    y(t, -1) = y(t, 1) &= 0 \quad &&\text{ for } t \in [0, T), \\
    y(0, x) &= y_0(x), \span\span
\end{alignedat}
\end{equation*}
with the viscosity $\nu > 0$ and the coefficient $\lambda \ge 0$ of the destabilizing term. In our notation, the linear operator corresponds to the weak form of $A u = -\nu \Delta u - \lambda u$ and the quadratic operator to $B(u \otimes v) = 1/2 (u v_x + v u_x)$. We use the initial condition
\begin{align}
\label{eq:BurgersInitialCondition}
    y_0(x) = \frac{2 \pi b \sin(\pi x)}{a + \cos(\pi x)}.
\end{align}
for some constants $a > 1$ and $b > 0$. This initial condition is adopted from \cite{Wood2006AnEquation}, wherein it is shown that the Burgers equation with initial condition \eqref{eq:BurgersInitialCondition} admits an exact solution if $\lambda = 0$, $b = \nu$, and $f = 0$:
\begin{align*}
%\label{eq:BurgersExactNoForcing}
    y(t, x) = \frac{2 \pi \nu \exp(-\pi^2 \nu t) \sin(\pi x)}{a + \exp(-\pi^2 \nu t) \cos(\pi x)}.
\end{align*}
We used the exact solution to verify our implementation. In experiments including forcing, we take
\begin{align*}
    f(t, x) = c(x^2 - 1)
\end{align*}
for some constant $c \in \mathbb{R}$.

The Carleman linearization is discretized using continuous piecewise linear finite elements in space and the implicit Euler method in time. It is noted that the time discretization is set to a high accuracy in order to avoid possible interference of the linearization, finite element discretization, and time integration errors. To measure the errors of approximations when there is no known exact solution, we compute a baseline solution using a pseudospectral discretization of the equations. To accommodate high regimes of $N$, we approximate the higher-order terms using the sparse grids technique known as the combination technique~\cite{Griebel2014OnTechnique}. In the case of no forcing, i.e., $c = 0$, the linear system arising from a single implicit Euler step has a block-triangular structure. This allows us to solve the system recursively, where each step involves solving an elliptic PDE linked to varying truncation levels, starting with the block associated with $A_N$, i.e., the right bottom block of $\mathcal{A}_N(t)$. In the case of $c \neq 0$, we have to turn to more elaborate methods since the linear system has a block-triangular structure. Theorem~\ref{thm:Convergence} and Corollary~\ref{corollary:ConergenceAlphaZero} imply a form of block-diagonal dominance of the block operator matrix $\mathcal{A}_N(t)$, which transfers to the linear system of a single implicit Euler step. This property led to the convergence of a block Gauss--Seidel method applied to the linear system. While this property holds in the undiscretized setting, it still has to be shown that it remains true upon discretization. This task is deferred to future research; however, it has been found to hold true empirically, and convergence of the block Gauss--Seidel method is achieved in our numerical experiments.

\subsection{Snapshots of Solutions}

\begin{figure}
    \centering
    \begin{subfigure}{0.49\textwidth}
        \includegraphics[width=\textwidth]{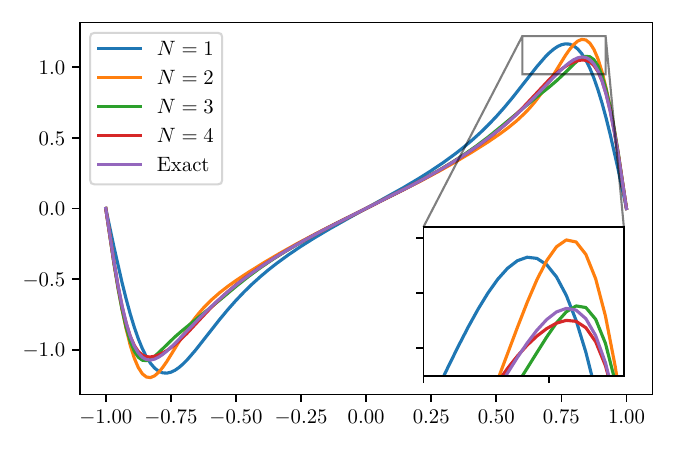}
        \caption{$t = 0.1$}
    \end{subfigure}
    \begin{subfigure}{0.49\textwidth}
        \includegraphics[width=\textwidth]{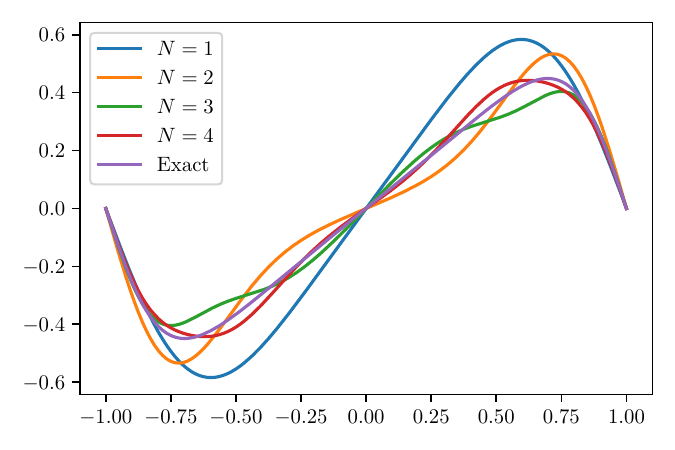}
        \caption{$t = 0.5$}
    \end{subfigure}
    \begin{subfigure}{0.49\textwidth}
        \includegraphics[width=\textwidth]{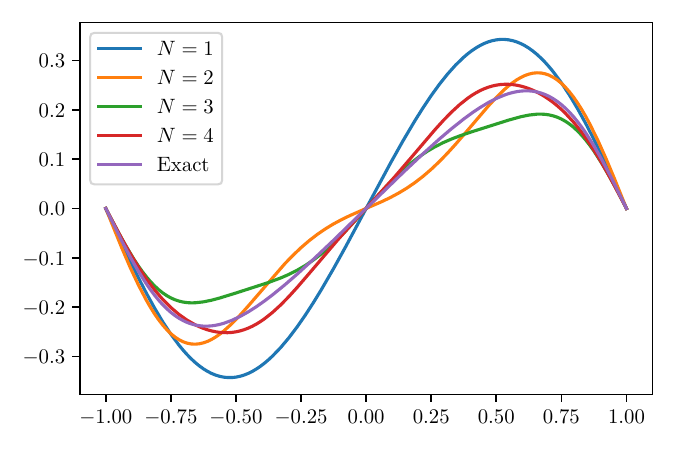}
        \caption{$t = 1$}
    \end{subfigure}
    \begin{subfigure}{0.49\textwidth}
        \includegraphics[width=\textwidth]{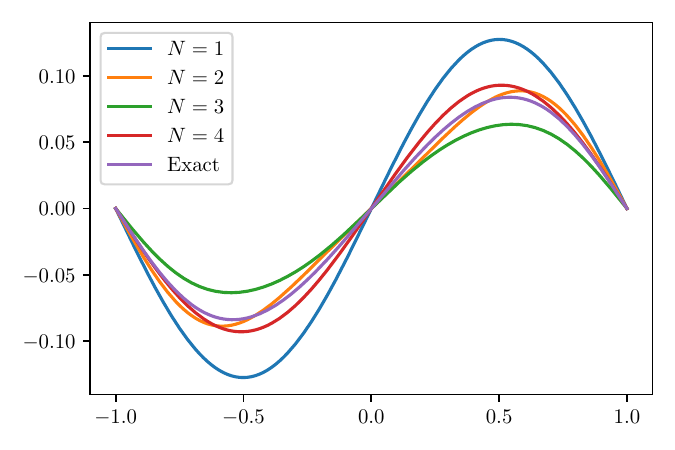}
        \caption{$t = 2$}
    \end{subfigure}
    \caption{Snapshots of solutions to the linearization of the Burgers' equation $y_N^{(1)}(t)$ for different truncation levels $N$ compared to the exact solution $y_e(t)$.}
    \label{fig:Snapshots}
\end{figure}

\begin{figure}
    \centering
    \begin{subfigure}{0.49\textwidth}
        \includegraphics[width=\textwidth]{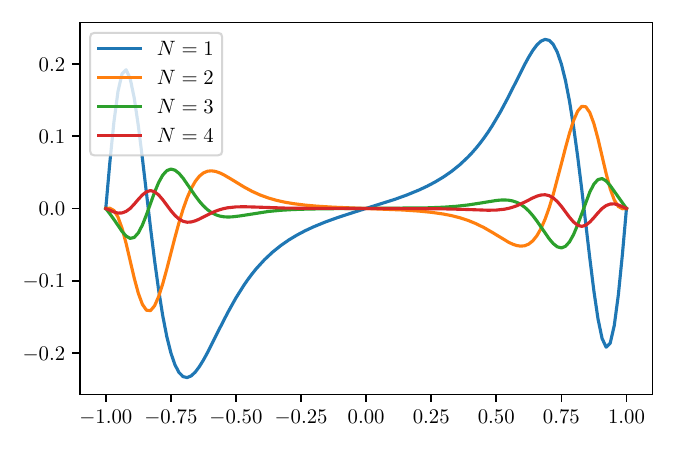}
        \caption{$t = 0.1$}
    \end{subfigure}
    \begin{subfigure}{0.49\textwidth}
        \includegraphics[width=\textwidth]{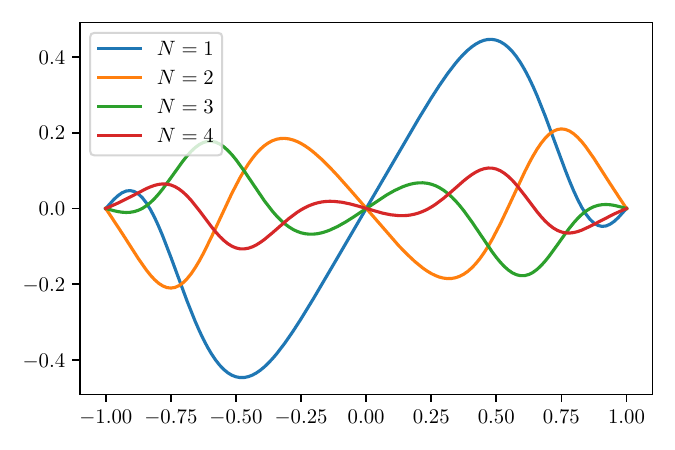}
        \caption{$t = 0.5$}
    \end{subfigure}
    \begin{subfigure}{0.49\textwidth}
        \includegraphics[width=\textwidth]{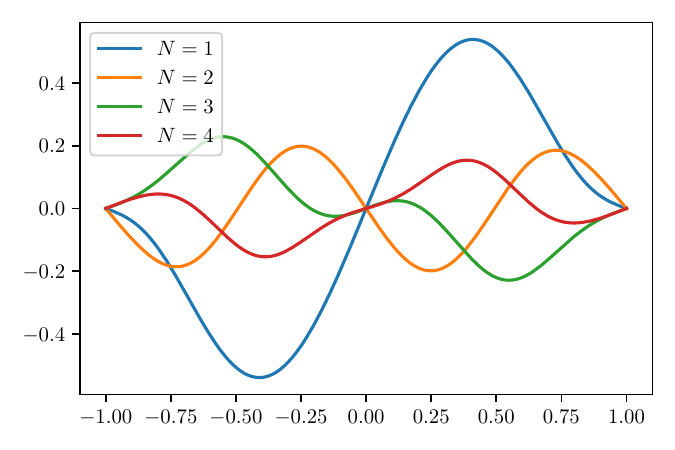}
        \caption{$t = 1$}
    \end{subfigure}
    \begin{subfigure}{0.49\textwidth}
        \includegraphics[width=\textwidth]{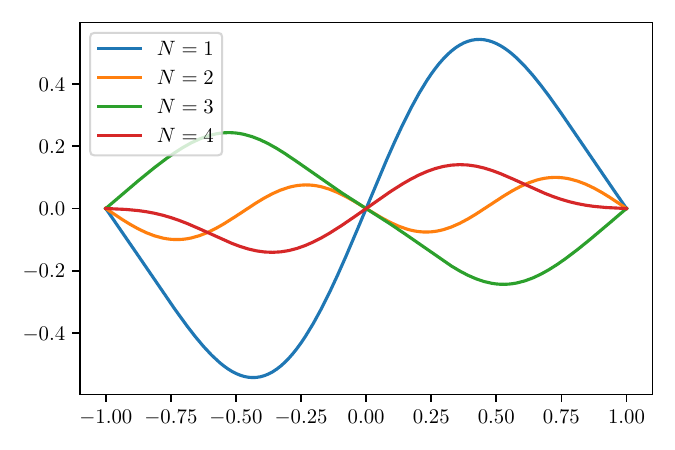}
        \caption{$t = 2$}
    \end{subfigure}
    \caption{Error plots of snapshots of solutions to the linearization of the Burgers' equation $(y_N^{(1)}(t) - y_e(t)) / \|y_e(t)\|_H$ for different truncation levels $N$.}
    \label{fig:ErrorSnapshots}
\end{figure}

First, we analyze snapshots of the exact solution to the Burgers equation in comparison to those obtained through the Carleman linearization. We set $\nu = 0.1$, $a = 1.05$, $b = 0.1$, and $c = 0$. Figure~\ref{fig:Snapshots} shows the solutions at four timestamps for truncation levels up to $N = 4$. At $t = 0.1$, we observe that a higher-order linearization results in a better approximation of the dynamical system. However, it also shows that the quality of the approximation deteriorates over time. This is confirmed in Figure~\ref{fig:ErrorSnapshots}, which illustrates the normalized error given by $(y_N^{(1)}(t) - y_e(t)) / \|y_e(t)\|_H$.

\subsection{Approximation Error}

\begin{figure}
    \centering
    \begin{subfigure}{0.49\textwidth}
    \includegraphics[width=\linewidth]{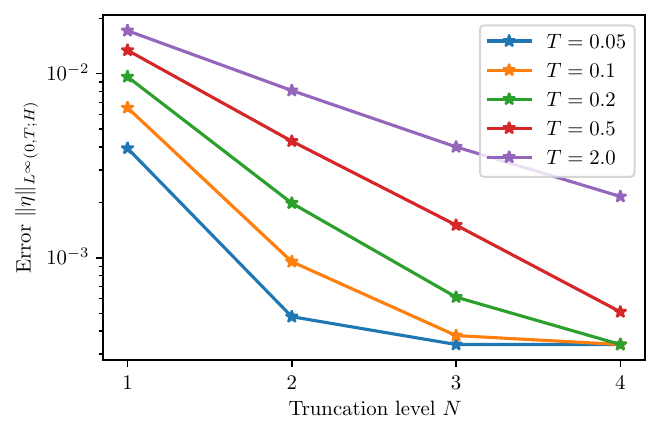}
    \caption{Dependence on $T$}
    \label{fig:ConvergenceParameterT}
    \end{subfigure}
    \begin{subfigure}{0.49\textwidth}
    \includegraphics[width=\linewidth]{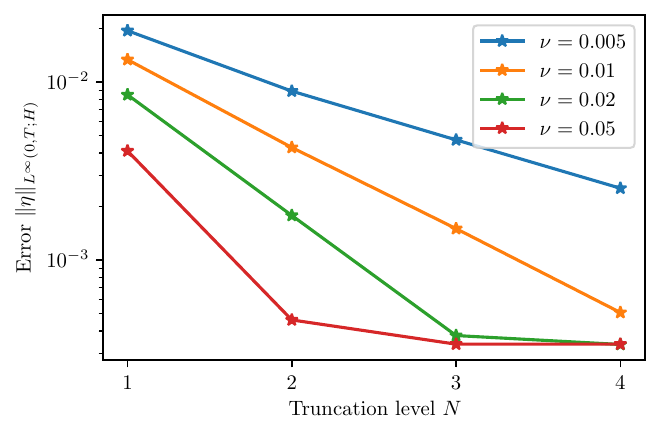}
    \caption{Dependence on $c_B$}
    \label{fig:ConvergenceParameterB}
    \end{subfigure}
    \begin{subfigure}{0.49\textwidth}
    \includegraphics[width=\linewidth]{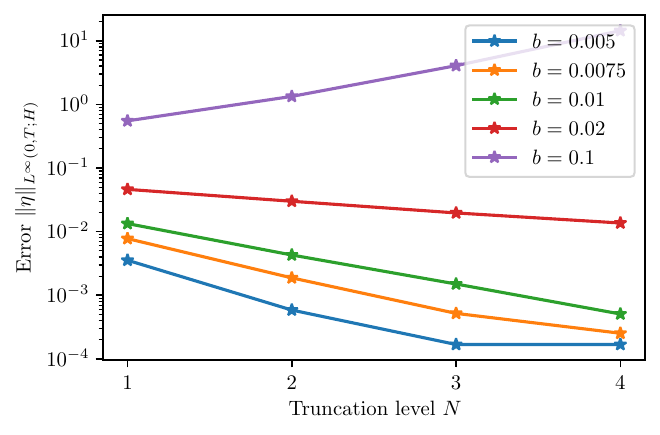}
    \caption{Dependence on $y_0$}
    \label{fig:ConvergenceParameterY0}
    \end{subfigure}
    \begin{subfigure}{0.49\textwidth}
    \includegraphics[width=\linewidth]{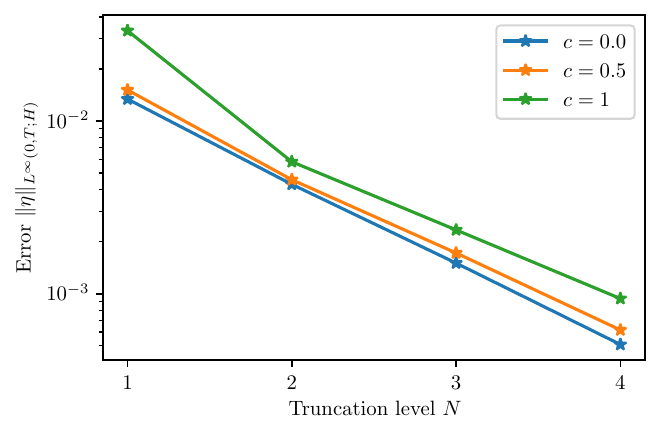}
    \caption{Dependence on $f$}
    \label{fig:ConvergenceParameterF}
    \end{subfigure}
    \begin{subfigure}{0.49\textwidth}
    \includegraphics[width=\linewidth]{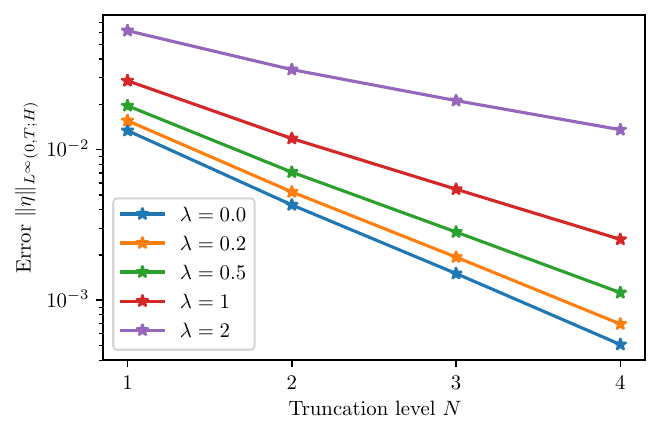}
    \caption{Dependence on $\lambda$ with $T=0.5$}
    \label{fig:ConvergenceParameterLambda}
    \end{subfigure}
    \begin{subfigure}{0.49\textwidth}
    \includegraphics[width=\linewidth]{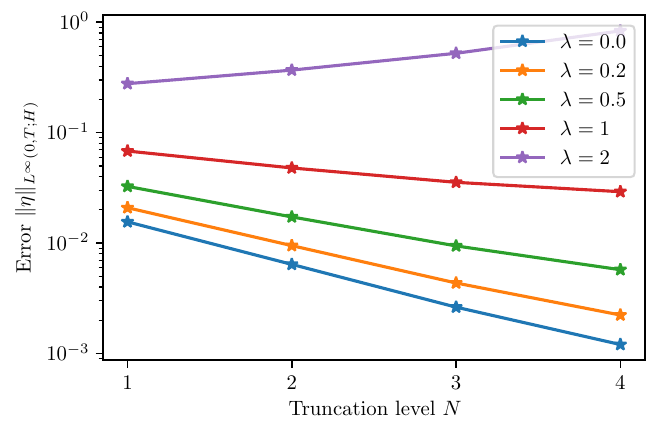}
    \caption{Dependence on $\lambda$ with $T=1$}
    \label{fig:ConvergenceParameterLambdaLongT}
    \end{subfigure}
\caption{Convergence of the Carleman linearization with respect to the truncation level $N$ measured by the error $\|\eta\|_{L^\infty(0, T; H)}$. Each plot indicates that the error behaves exponentially in $N$, whereas different sets of model parameters affect the error bounds in Theorem~\ref{thm:Convergence} and Corollary~\ref{corollary:ConergenceAlphaZero}.}
\end{figure}

Theorem~\ref{thm:Convergence} and Corollary~\ref{corollary:ConergenceAlphaZero} give an upper bound on the expected approximation error arising from the linearization, which suggests (sub-)exponential convergence with respect to $N$. In this section, we verify these convergence rates by analyzing how the error behaves for varying parameters $T$, $c_B$, $y_0$, $f$, and $\lambda$. Each of these parameters are examined in a separate subsection. 

\subsubsection{Dependence on $T$}
The convergence rate given by Theorem~\ref{thm:Convergence} is partially governed  by the $W(0, T; V^{1 + \alpha}, V^{-1 + \alpha})$-norm of $y_e$. Since this expression grows as $T$ is increased, this suggests that slower convergence is to be expected for larger time horizons. Figure~\ref{fig:ConvergenceParameterT} shows the approximation error as a function of $N$ measured with $\|\eta\|_{L^\infty(0, T; H)}$ for $\nu = 0.01$, $a = 1.05$, $b = 0.01$, $c = 0$, $\lambda = 0$, and various final times $T$. It is observed that larger values of $T$ lead to slower convergence. Furthermore, the error decreases exponentially fast with respect to $N$ initially until a certain threshold is reached. This phenomenon reflects the two error source terms, the linearization and the discretization error. The linearization error decays exponentially until the discretization error dominates.

\subsubsection{Dependence on $c_B$}
As observed in the preceding subsection, the convergence rate is determined by the size of the exact solution, which is also affected by the size of the nonlinearity $c_B$. In the case of the Burgers' equation, we have that $c_B \sim \nu$. Figure~\ref{fig:ConvergenceParameterB} depicts the error as a function of $N$ for $T=0.5$, $a = 1.05$, $b = 0.01$, $c = 0$, $\lambda = 0$, and various values for $\nu$. This verifies the deterioration of the convergence rate as $\nu$ is decreased, as well as exponential convergence until the discretization error is reached.

\subsubsection{Dependence on $y_0$}
In addition to the norm of $y_e$, the convergence rate also depends on the initial value $y_0$. Similarly, we expect the convergence to worsen as $y_0$ becomes larger. In our model problem, the size of the initial condition is determined by the parameter $b$. Figure~\ref{fig:ConvergenceParameterY0} shows the error as a function of $N$ for $T=0.5$, $\nu = 0.01$, $a = 1.05$, $c = 0$, $\lambda = 0$, and various values of $b$. The plot confirms the expected behavior and also demonstrates that the linearization diverges when the initial condition is too large.

\subsubsection{Dependence on $f$}
As stated in Corollary~\ref{corollary:ConergenceAlphaZero}, the forcing $f$ has an immediate effect on the size of the solution $y_e$. We expect that larger $f$ result in larger solutions $y_e$, which in turn leads to slower convergence of the linearization. Figure~\ref{fig:ConvergenceParameterF} shows the error as a function of $N$ for $T=0.5$, $\nu = 0.01$, $a = 1.05$, $b = 0.01$, $\lambda = 0$, and various values of $c$, which determines the size of $f$. We observe the expected behavior. It is noted that the size of $f$ has only a mild effect on the convergence in the presented experiments.

\subsubsection{Dependence on $\lambda$}
Lastly, we examine the parameter $\lambda$. The established theoretical error bound includes a factor $\exp(\lambda T)$. This indicates that larger values of $\lambda$ and $T$ lead to poorer convergence. Moreover, the linearization is not guaranteed to converge for arbitrarily large $T$ if $\lambda > 0$. Figures~\ref{fig:ConvergenceParameterLambda} and \ref{fig:ConvergenceParameterLambdaLongT} show the error as a function of $N$ for $\nu = 0.01$, $a = 1.05$, $b = 0.01$, $c = 0$, and various values of $\lambda$. The first plot shows the error for $T = 0.5$, and the second for $T = 1$. In both scenarios, exponential convergence is achieved, which worsens as $\lambda$ is increased. While the linearization converges for all scenarios for $T = 0.5$, the linearization diverges in the case of $T = 1$ and $\lambda = 2$, confirming our error bounds.

% Show that the convergence radius does not change (?)

\subsection{Benefits of Non-Standard Discretization}

\begin{figure}
\centering
\begin{subfigure}{0.49\textwidth}
\includegraphics[width=\textwidth]{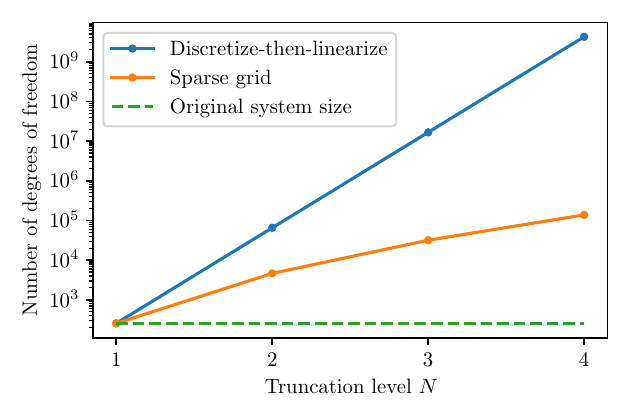}
\caption{DOFs as a function of $N$ for $h = 2^{-8}$}
\label{fig:DegreesOfFreedomN}
\end{subfigure}
\begin{subfigure}{0.49\textwidth}
\includegraphics[width=\textwidth]{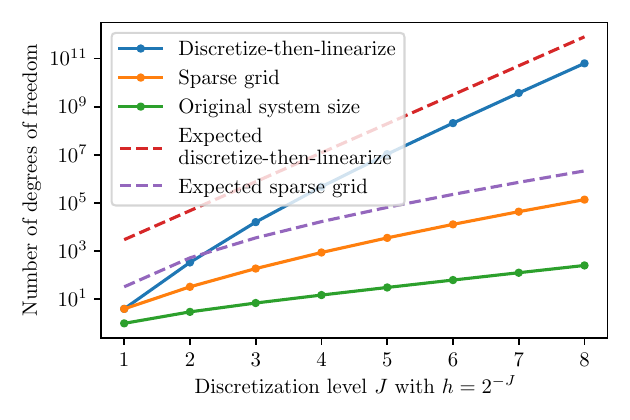}
\caption{DOFs as a function of discret.\ levels for $N = 4$}
\label{fig:DegreesOfFreedomJ}
\end{subfigure}
\caption{Number of degrees of freedom (denoted DOFs) $\dim U_h(N)$ for different discretization methods for varying truncation levels $N$ and refinement levels $J$ (the corresponding maximum cell size of the mesh is given by $h=2^{-J}$).}
\end{figure}

Non-standard discretizations can offer advantages over traditional methods. In our model problem, the use of sparse grids improves the scaling of the required degrees of freedom. Figure~\ref{fig:DegreesOfFreedomN} illustrates the spatial degrees of freedom $\dim U_h(N)$ for various values of $N$ using the selected discretization. Meanwhile, Figure~\ref{fig:DegreesOfFreedomJ} shows the scaling (in some cases expected scaling) of $\dim U_h(N)$ for different levels of discretization $J$, where the maximum cell size of the mesh is determined by $h = 2^{-J}$. These plots demonstrate the benefits of the non-standard discretization compared to the discretize-then-linearize approach, highlighting how this method helps alleviate the exponential increase in degrees of freedom.

\section{Conclusion}
\label{sec:Conclusion}
% Add reference to parallel-in-time paper as future work

In this paper, we derived the well-posedness and the convergence of the truncated Carleman linearization for infinite-dimensional parabolic Cauchy problems under suitable assumptions. We achieved this in an undiscretized setting. This allowed us to separate the error arising from the truncated linearization from the discretization error stemming from the approximation of the infinite-dimensional equations. We thus justified the application of the linearization to PDE problems. We verified the theoretical findings with a series of numerical experiments, showing the expected convergence of the linearization. The theoretical findings motivate and support the application of non-standard discretization methods, which enable higher-order linearizations that were previously intractable. Numerical experiments showcase how such methods can reduce the number of degrees of freedom by orders of magnitude.

This paper addresses the linearization of parabolic dynamical systems. Even though parabolic equations possess several favorable regularity properties and are generally better understood than hyperbolic equations, it is suggested that many of the concepts discussed here are transferable to second-order hyperbolic systems of the form
\begin{align*}
y''(t) + A y(t) + B (y(t) \otimes y(t)) &= f(t) \quad\text{in } L^2(0, T; V'), \\
y(0) &= y_0, \\
y'(0) &= y_1.
\end{align*}
By assuming analogous properties for $A$, $B$, and the Hilbert spaces, one can derive robust coercivity and boundedness constants for the linear system associated with the Carleman linearization, following the same argumentation presented in this paper. This approach may yield analogous well-posedness and convergence results for the linearized problem, thereby facilitating a rigorous theoretical analysis of hyperbolic nonlinear PDEs.

While this paper introduces methods to mitigate the computational burden of the Carleman linearization, the practical performance and optimization of these methods remain open questions. A study on the computational aspects of the Carleman linearization could evaluate the effectiveness of sparse grid methods across various applications, explore their efficient implementation, and investigate the possibility of establishing theoretical bounds on the discretization error. Additionally, such a study could examine various structure-exploiting low-rank methods such as the tensor train method. The performance of the methods could be analyzed in various applications of the linearization, including model-order reduction and optimal control design.

Another potential branch of future research is concerned with the efficient solution of parabolic Cauchy problems, as well as associated optimal control problems. So-called parallel-in-time methods address the efficient solution of dynamical systems while enabling parallelization along the time axis. Among these methods, diagonalization-based approaches, such as those discussed in \cite{Gander2021ParaDiag:Technique,Wu2020Diagonalization-basedProblems,Heinzelreiter2024Diagonalization-basedProblems}, have shown significant promise. These methods excel in handling (nearly) linear time-invariant problems. Recent studies have demonstrated the efficiency of diagonalization-based approaches from moderately sized problems to complex linear fluid flow problems. However, they encounter difficulties with nonlinear equations. Integrating the Carleman linearization with non-standard discretizations could potentially extend the applicability of diagonalization-based methods to nonlinear PDE problems, including the Navier--Stokes equations.

% Lastly, a well-established method for solving large-scale nonlinear equations is Newton's method. For highly nonlinear equations, such as the Navier--Stokes equation with a high Reynolds number, Newton's method can struggle to converge unless a good initial guess is provided. Newton's method is based on a local second-order approximation of a nonlinear equation. The Carleman linearization could enhance Newton's method by generalizing it to a higher-order approach, thereby improving its convergence properties.

\section*{Acknowledgements}
This work has made use of the resources provided by the Edinburgh Compute and Data Facility (ECDF) (\url{http://www.ecdf.ed.ac.uk/}). BH was supported by the MAC--MIGS Centre for Doctoral Training under EPSRC grant EP/S023291/1. JWP was supported by the EPSRC grant EP/Z533786/1. The authors would like to thank Stefan Klus for insightful discussions on linearization techniques and Tobias Breiten for his valuable input on bilinear systems.

\section*{References}
\printbibliography[heading=none]

\appendix
\section{Proof of Lemma~\ref{lem:WellPosednessNonlinear}}
\label{appendix:ProofsNonlinearCauchyProblem}

Before we proceed to the proof of Lemma~\ref{lem:WellPosednessNonlinear}, we introduce a few results on $B$. Most of these are adaptations of results proved in \cite{Breiten2019FeedbackApproximation} with a focus on the explicit dependence of the involved constants on $T$. In the following, we adopt the notation $B(x, y)$ in place of $B(x \otimes y)$ to emphasize the bilinear structure of the operator.
\begin{lemma}
    \label{lem:WBound}
    Let $T \in (0, \infty]$. For all $z \in W(0, T; V, V')$, it holds that
    \begin{align*}
        \|z\|_{L^\infty(0, T; H)} \le \|z(0)\|_H + \|z\|_{W(0, T; V, V')}.
    \end{align*}
\end{lemma}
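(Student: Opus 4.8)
The plan is to exploit the fundamental integration-by-parts formula available for functions in $W(0, T; V, V')$. By the Lions--Magenes lemma already invoked in the excerpt, every $z \in W(0, T; V, V')$ admits a representative in $C(0, T; H)$, and the scalar function $t \mapsto \|z(t)\|_H^2$ is absolutely continuous with
\[
\frac{\mathrm{d}}{\mathrm{d}t}\|z(t)\|_H^2 = 2\langle z'(t), z(t)\rangle_V \quad \text{for a.e. } t \in (0, T).
\]
Integrating from $0$ to $t$ yields the identity
\[
\|z(t)\|_H^2 = \|z(0)\|_H^2 + 2\int_0^t \langle z'(\tau), z(\tau)\rangle_V \, \mathrm{d}\tau,
\]
which serves as the starting point of the estimate.

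Next I would bound the integral. Using the duality pairing bound $|\langle z'(\tau), z(\tau)\rangle_V| \le \|z'(\tau)\|_{V'}\|z(\tau)\|_V$ together with the Cauchy--Schwarz inequality in time, I obtain
\[
2\int_0^t \langle z'(\tau), z(\tau)\rangle_V\, \mathrm{d}\tau \le 2\|z'\|_{L^2(0, T; V')}\,\|z\|_{L^2(0, T; V)}.
\]
Young's inequality $2ab \le a^2 + b^2$ then collapses this product into $\|z'\|_{L^2(0, T; V')}^2 + \|z\|_{L^2(0, T; V)}^2 = \|z\|_{W(0, T; V, V')}^2$. Combining with the identity above gives the pointwise bound
\[
\|z(t)\|_H^2 \le \|z(0)\|_H^2 + \|z\|_{W(0, T; V, V')}^2 \le \left(\|z(0)\|_H + \|z\|_{W(0, T; V, V')}\right)^2,
\]
where the final step merely discards the nonnegative cross term $2\|z(0)\|_H\|z\|_{W(0, T; V, V')}$. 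Taking square roots and then the essential supremum over $t \in (0, T)$ delivers the claimed estimate.

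The main obstacle is not the algebra but the justification of the integration-by-parts identity, which is precisely the regularity content encapsulated in the Lions--Magenes lemma; I would cite it rather than reprove it. A secondary point requiring care is the unbounded case $T = \infty$: there the identity holds on every finite subinterval $[0, t]$, and the right-hand integrals are bounded uniformly in $t$ by the (finite) $L^2(0, \infty)$ norms, so the pointwise bound persists for all finite $t$ and the supremum may then be taken over the whole half-line.
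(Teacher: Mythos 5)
Your argument is exactly the paper's: the same integration-by-parts identity $\|z(t)\|_H^2 = \|z(0)\|_H^2 + 2\int_0^t \langle z'(s), z(s)\rangle_V\,\mathrm{d}s$ (justified via the Lions--Magenes regularity, cf.\ Lemma III.1.2 in Temam), followed by the duality bound, Cauchy--Schwarz in time, and Young's inequality, then absorbing $a^2 + b^2 \le (a+b)^2$ and taking the supremum. The proposal is correct and matches the paper's proof in every essential step, with the remark on $T = \infty$ being a welcome extra precision.
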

\begin{proof}
    Let $z \in W(0, T; V, V')$. Then, we obtain the following upper bound:
\begin{align*}
    \|z(t)\|_H^2 &= \| z(0) \|_H^2 + 2 \int_0^t \langle z'(s), z(s) \rangle_V \mathrm{d}s \le \| z(0) \|_H^2 + 2 \int_0^T \left|\langle z'(s), z(s) \rangle_V\right| \mathrm{d}s \\
    &\le \| z(0) \|_H^2 + 2 \|z'\|_{L^2(0, T; V')} \|z\|_{L^2(0, T; V)} \le \| z(0) \|_H^2 + \|z'\|_{L^2(0, T; V')}^2 + \|z\|_{L^2(0, T; V)}^2
\end{align*}
for $t \in [0, T)$, cf.~Lemma III.1.2 in \cite{Temam1979Navier-StokesAnalysis}.
\end{proof}

\begin{lemma}[Lemma 2 in \cite{Breiten2019FeedbackApproximation}]
\label{lem:BLInfinityBound}
    Let $T \in (0, \infty]$ and $B$ fulfill \eqref{assumption:BBounded}. Then, for all $y, z, w \in W(0, T; V, V')$, it holds that
    \begin{align*}
        &\left| \langle B(y, z), w\rangle_{L^2(0, T; V'), L^2(0, T; V)} \right| \\
        &\quad\le c_B \|y\|_{L^\infty(0, T; H)}^{\frac{1}{2}} \|y\|_{L^2(0, T; V)}^{\frac{1}{2}} \|z\|_{L^\infty(0, T; H)}^{\frac{1}{2}} \|z\|_{L^2(0, T; V)}^{\frac{1}{2}} \|w\|_{L^2(0, T; V)}.
    \end{align*}
\end{lemma}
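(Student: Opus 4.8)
The plan is to reduce the time-integrated estimate to the pointwise-in-time bound \eqref{assumption:BBounded} and then absorb the time dependence through Hölder's inequality with a judicious choice of exponents. First I would note that since $y, z, w \in W(0,T;V,V') \hookrightarrow L^2(0,T;V)$, the values $y(t), z(t), w(t)$ lie in $V$ for almost every $t$, so \eqref{assumption:BBounded} applies pointwise:
\[
\left|\langle B(y(t),z(t)), w(t)\rangle_V\right| \le c_B \|y(t)\|_H^{1/2}\|y(t)\|_V^{1/2}\|z(t)\|_H^{1/2}\|z(t)\|_V^{1/2}\|w(t)\|_V
\]
for a.e.\ $t \in (0,T)$. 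Identifying the duality pairing over $L^2(0,T;V') \times L^2(0,T;V)$ with $\int_0^T \langle B(y(t),z(t)), w(t)\rangle_V\,\mathrm{d}t$, I would bound the modulus of this integral by the integral of the modulus and insert the pointwise estimate.

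Next I would pull the $L^\infty(0,T;H)$ factors out of the integral via the crude bounds $\|y(t)\|_H^{1/2} \le \|y\|_{L^\infty(0,T;H)}^{1/2}$ and $\|z(t)\|_H^{1/2} \le \|z\|_{L^\infty(0,T;H)}^{1/2}$, leaving the integral $\int_0^T \|y(t)\|_V^{1/2}\|z(t)\|_V^{1/2}\|w(t)\|_V\,\mathrm{d}t$. The key step is to apply the three-factor Hölder inequality with exponents $4$, $4$, and $2$ (which satisfy $\tfrac14 + \tfrac14 + \tfrac12 = 1$) to the factors $\|y(\cdot)\|_V^{1/2}$, $\|z(\cdot)\|_V^{1/2}$, and $\|w(\cdot)\|_V$. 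Since $\big(\int_0^T \|y(t)\|_V^{2}\,\mathrm{d}t\big)^{1/4} = \|y\|_{L^2(0,T;V)}^{1/2}$ and likewise for $z$, while the $L^2$-factor contributes $\|w\|_{L^2(0,T;V)}$, this reproduces exactly the claimed product of norms.

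For the pairing on the left to be well-defined in the first place, I would verify that $B(y,z) \in L^2(0,T;V')$; this follows from the same pointwise bound, since $\int_0^T \|B(y(t),z(t))\|_{V'}^2\,\mathrm{d}t$ is controlled by $c_B^2 \|y\|_{L^\infty(0,T;H)}\|z\|_{L^\infty(0,T;H)}\|y\|_{L^2(0,T;V)}\|z\|_{L^2(0,T;V)}$, which is finite. There is no genuine obstacle here --- the result is essentially a measurability-plus-Hölder argument --- so the only point requiring care is bookkeeping the exponents so that the half-powers of $y$ and $z$ assemble into $L^4$-norms that recombine into the square-root $L^2(0,T;V)$ factors on the right-hand side.
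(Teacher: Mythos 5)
Your proposal is correct: the pointwise application of \eqref{assumption:BBounded}, extraction of the $L^\infty(0,T;H)$ factors, and the three-factor H\"older inequality with exponents $4,4,2$ reproduce the stated bound exactly, and your well-definedness check of $B(y,z)\in L^2(0,T;V')$ is also sound. The paper does not reproduce a proof but cites Lemma~2 of \cite{Breiten2019FeedbackApproximation}, whose argument is precisely this standard one, so your route coincides with the intended proof.
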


\begin{lemma}[Refinement of Corollary 3 in \cite{Breiten2019FeedbackApproximation}]
\label{lem:BoundedBWNorm}
    Let $T \in (0, \infty]$ and $B$ fulfill \eqref{assumption:BBounded}. For all $y, z \in W(0, T; V, V')$, it holds that
    \begin{align*}
        \|B(y, z)\|_{L^2(0, T; V')} \le c_B (\|y(0)\|_H + \|y\|_{W(0, T; V, V')}) (\|z(0)\|_H + \|z\|_{W(0, T; V, V')}).
    \end{align*}
\end{lemma}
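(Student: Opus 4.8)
The plan is to chain the three preceding results: the duality characterisation of the $L^2(0,T;V')$-norm, the space-time bilinear estimate of Lemma~\ref{lem:BLInfinityBound}, and the embedding bound of Lemma~\ref{lem:WBound}. First I would exploit that $(V,H,V')$ is a Gelfand triple, so that the target norm admits the representation
\[
\|B(y,z)\|_{L^2(0,T;V')} = \sup_{0\neq w\in L^2(0,T;V)} \frac{\left|\langle B(y,z),w\rangle_{L^2(0,T;V'),L^2(0,T;V)}\right|}{\|w\|_{L^2(0,T;V)}}.
\]
The numerator is exactly the quantity estimated in Lemma~\ref{lem:BLInfinityBound}; applying that bound and dividing through by $\|w\|_{L^2(0,T;V)}$ eliminates the test function and leaves
\[
\|B(y,z)\|_{L^2(0,T;V')} \le c_B \|y\|_{L^\infty(0,T;H)}^{1/2}\|y\|_{L^2(0,T;V)}^{1/2}\|z\|_{L^\infty(0,T;H)}^{1/2}\|z\|_{L^2(0,T;V)}^{1/2}.
\]

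The second half of the argument is to absorb the four mixed factors into the two $W$-norm expressions. Here I would use two elementary facts about the $W$-norm: by its very definition $\|y\|_{L^2(0,T;V)} \le \|y\|_{W(0,T;V,V')}$, and by Lemma~\ref{lem:WBound} $\|y\|_{L^\infty(0,T;H)} \le \|y(0)\|_H + \|y\|_{W(0,T;V,V')}$. Consequently both $\|y\|_{L^\infty(0,T;H)}^{1/2}$ and $\|y\|_{L^2(0,T;V)}^{1/2}$ are dominated by $(\|y(0)\|_H + \|y\|_{W(0,T;V,V')})^{1/2}$, so their product is bounded by $\|y(0)\|_H + \|y\|_{W(0,T;V,V')}$. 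Repeating the same reasoning for $z$ and multiplying the two bounds yields precisely the claimed estimate.

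Strictly speaking there is no serious obstacle: the statement is a clean corollary of the preceding lemmas, and the only point requiring a moment's care is the combination of the two half-powers, which is handled by monotonicity of $t\mapsto t^{1/2}$ rather than by any genuine inequality. If I wanted to be scrupulous, the one thing to check is well-definedness, namely that $B(y,z)$ indeed defines an element of $L^2(0,T;V')$ so that taking its norm is meaningful; this is already guaranteed by the finiteness of the right-hand side in Lemma~\ref{lem:BLInfinityBound}, so no additional work is needed beyond invoking it.
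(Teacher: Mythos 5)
Your proof is correct and follows exactly the route the paper intends: its own proof is the one-line remark that the result follows from Lemma~\ref{lem:WBound} and Lemma~\ref{lem:BLInfinityBound}, and your duality characterisation of the $L^2(0,T;V')$-norm together with the absorption of the half-powers via $\|y\|_{L^2(0,T;V)}\le\|y\|_{W(0,T;V,V')}$ is precisely the omitted bookkeeping.
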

\begin{proof}
    The result follows from Lemma~\ref{lem:WBound} and Lemma~\ref{lem:BLInfinityBound}.
\end{proof}

\begin{lemma}[Refinement of Lemma 4 in \cite{Breiten2019FeedbackApproximation}]
\label{lem:BLipschitz}
    Let $T \in (0, \infty]$ and $B$ fulfill \eqref{assumption:BBounded}. For all $\delta \in [0, 1]$ and for all $y, z \in W(0, T; V, V')$ with $\|y(0)\|_H + \|y\|_{W(0, T; V, V')} \le \delta$ and $\|z(0)\|_H + \|z\|_{W(0, T; V, V')} \le \delta$, it holds that
    \begin{align*}
        \| B(y, y) - B(z, z) \|_{L^2(0, T; V')} \le 2 \delta c_B \left(\|y(0) - z(0)\|_H + \| y - z\|_{W(0, T; V, V')}\right).
    \end{align*}
\end{lemma}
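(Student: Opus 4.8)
The plan is to reduce the quadratic difference $B(y,y) - B(z,z)$ to two bilinear expressions, each carrying the difference $y - z$ as one argument, and then to invoke Lemma~\ref{lem:BoundedBWNorm} on each. The underlying observation that makes this work cleanly is that $W(0, T; V, V')$ is a linear space, so $y - z$ again belongs to $W(0, T; V, V')$ and its initial value satisfies $(y - z)(0) = y(0) - z(0)$; this is precisely what lets the product bound of Lemma~\ref{lem:BoundedBWNorm} be phrased in terms of the difference.

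First I would use the bilinearity of $B$ to telescope
\begin{align*}
    B(y, y) - B(z, z) = B(y, y - z) + B(y - z, z).
\end{align*}
Only bilinearity, not symmetry, is needed here. Taking $L^2(0, T; V')$-norms and applying the triangle inequality yields
\begin{align*}
    \| B(y, y) - B(z, z) \|_{L^2(0, T; V')} \le \| B(y, y - z) \|_{L^2(0, T; V')} + \| B(y - z, z) \|_{L^2(0, T; V')}.
\end{align*}

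Next I would bound each summand via Lemma~\ref{lem:BoundedBWNorm}. For the first term, the factor associated with $y$ satisfies $\|y(0)\|_H + \|y\|_{W(0, T; V, V')} \le \delta$ by hypothesis, while the factor associated with $y - z$ equals $\|y(0) - z(0)\|_H + \|y - z\|_{W(0, T; V, V')}$, so that
\begin{align*}
    \| B(y, y - z) \|_{L^2(0, T; V')} \le \delta c_B \left( \|y(0) - z(0)\|_H + \|y - z\|_{W(0, T; V, V')} \right).
\end{align*}
The second term is estimated identically, now using $\|z(0)\|_H + \|z\|_{W(0, T; V, V')} \le \delta$ for the factor attached to $z$. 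Adding the two bounds produces the factor $2 \delta c_B$ asserted in the statement.

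There is no substantial obstacle: the result is a routine Lipschitz estimate on the ball of radius $\delta$, and all of its analytic content is already packaged in Lemma~\ref{lem:BoundedBWNorm} (and, through it, in Lemmas~\ref{lem:WBound} and \ref{lem:BLInfinityBound}). The only points requiring care are the bookkeeping of which factor inherits the radius bound $\delta$ and which inherits the difference norm, together with the identity $(y - z)(0) = y(0) - z(0)$ that collapses the initial-data contribution to $\|y(0) - z(0)\|_H$.
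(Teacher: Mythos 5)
Your argument is correct and is essentially identical to the paper's proof: the same telescoping decomposition $B(y,y) - B(z,z) = B(y, y-z) + B(y-z, z)$, followed by the triangle inequality and an application of Lemma~\ref{lem:BoundedBWNorm} to each term, with the radius bound $\delta$ absorbed by the undifferenced factor. Nothing is missing.
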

\begin{proof}
Let $y, z \in W(0, T; V, V')$. With Lemma~\ref{lem:BoundedBWNorm}, it follows that
\begin{align*}
\|B(y, y) - B(z, z)\|_{L^2(0, T; V')} &\le \|B(y, y - z)\|_{L^2(0, T; V')} + \|B(y - z, z)\|_{L^2(0, T; V')} \\
&\le 2 \delta c_B \left(\|y(0) - z(0)\|_H + \|y - z\|_{W(0, T; V, V')}\right). \qedhere
\end{align*}
\end{proof}

This enables us to prove the well-posedness of the nonlinear Cauchy problem.
\begin{proof}[Proof of Lemma~\ref{lem:WellPosednessNonlinear}]
Let $y_0 \in H$ and $g \in L^2(0, T; V')$. Then, the system
\begin{equation}
\label{eq:LemmaNonlinearWellposednessProofLinearCauchy}
\begin{aligned}
    z'(t) + A z(t) &= g(t) \quad \text{in } L^2(0, T; V'), \\
    z(0) &= y_0
\end{aligned}
\end{equation}
has a unique solution $z \in W(0, T; V, V')$ with $\|z(0)\|_H + \| z \|_{W(0, T; V, V')} \le c_L \exp(\lambda T)(\|y_0\|_H + \|g\|_{L^2(0, T; V')})$ with the constant $c_L \ge 1$ from Lemma~\ref{lem:WellPosednessLinear}. Let $\mu := \|y_0\|_H + \|f \|_{L^2(0, T; V')}$.
We set $c_N = \max(1/(4 c_B \exp(\lambda T)), c_L) \ge 1$. Define the set
\begin{equation*}
M = \{ y \in W(0, T; V, V') \mid \|y(0)\|_H + \|y\|_{W(0, T; V, V')} \le 2 c_N \exp(\lambda T)\mu, ~~ y(0) = y_0 \}.    
\end{equation*}
Due to $c_N \ge c_L$ and estimate \eqref{eq:ParabolicEstimateWNorm}, the solution to \eqref{eq:LemmaNonlinearWellposednessProofLinearCauchy} with $g = f$ belongs to $M$. Thus, $M$ is not empty. Due to the continuous embedding $C(0, T; H) \hookrightarrow W(0, T; V, V')$, the set is closed under the $W(0, T; V, V')$--norm. Define the map $Z: M \to W(0, T; V, V')$, where $z = Z(y)$ maps the function $y$ to the solution of the system
\begin{align*}
    z'(t) + A z(t) + B(y(t), y(t))  &= f(t), \\
    z(0) &= y_0.
\end{align*}
Since $c_N \ge c_L$ and by using Lemma~\ref{lem:BLipschitz} with $\delta = 2 c_N \exp(\lambda T) \mu \le \frac{1}{4 c_N \exp(\lambda T) c_B} \le 1$ and one argument being zero, we obtain
\begin{align*}
    \|z(0)\|_H + \|z\|_{W(0, T; V, V')} &\le c_N \exp(\lambda T) \left( \|y_0\|_H + \|B(y, y)\|_{L^2(0, T; V')} + \|f\|_{L^2(0, T, V')} \right) \\
    &\le c_N \exp(\lambda T) \left(\mu + 2 \delta c_B \left(\|y_0\| + \|y\|_{W(0, T; V, V')}\right)\right) \\
    &\le c_N \exp(\lambda T) \left( \mu + \frac{1}{2 c_N \exp(\lambda T) c_B} 2 c_B c_N \exp(\lambda T) \mu \right) = 2 c_N \exp(\lambda T) \mu.
\end{align*}
Hence, $Z(M) \subseteq M$. Next, we show that $Z$ is a contraction. For $y_1, y_2 \in M$, let $z = Z(y_1) - Z(y_2)$, which solves
\begin{align*}
    z'(t) + A z(t) + B(y_1(t), y_1(t)) - B(y_2(t), y_2(t)) &= 0 \quad \text{in } L^2(0, T; V'), \\
    z(0) &= 0.
\end{align*}
From Lemma~\ref{lem:BLipschitz}, we obtain that
\begin{align*}
    &\|Z(y_1) - Z(y_2)\|_{W(0, T; V, V')} = \| z \|_{W(0, T; V, V')} \le c_N \exp(\lambda T) \left( \|B(y_1, y_1) - B(y_2, y_2) \|_{L^2(0, T; V')} \right) \\
    &\quad\le c_N \exp(\lambda T) 2 \delta c_B \|y_1 - y_2\|_{W(0, T; V, V')} \le \frac{1}{2} \|y_1 - y_2\|_{W(0, T; V, V')}.
\end{align*}
Due to Banach's fixed-point theorem, there is a unique solution $y \in M$ to $Z(y) = y$, which proves the existence of a solution to the nonlinear Cauchy problem. 

The uniqueness of the solution in $W(0, T; V, V')$ can be proven the same way as in \cite{Breiten2019FeedbackApproximation}.
\end{proof}

\section{Proofs of Properties of $A_k$, $B_k$, and $F_k$}
\label{appendix:ProofsAB}

\newcommand{\skipindex}[2]{\vec{#1}^{(#2)}}
\newcommand{\replace}[3]{%
  \vec{#1}^{(#2; #3)}%
}

For the following proofs, we introduce the notation $\skipindex{j}{l} := (\vec{j}_1, \dots, \vec{j}_{l - 1}, \vec{j}_{l + 1}, \dots, \vec{j}_{k}) \in \mathbb{N}^{k - 1}$ for $k \ge 2$ and $\replace{j}{l}{p_1, \dots, p_m} := (\vec{j}_1, \dots, \vec{j}_{l - 1}, p_1, \dots, p_m, \vec{j}_{l + 1}, \dots, \vec{j}_{k}) \in \mathbb{N}^{k + m - 1}$ for $k \ge 1$, where $\vec{j} \in \mathbb{N}^{k}$ is a multi-index with $k \in \mathbb{N}$, and $p_n \in \mathbb{N}$ for $n \in \{1, \dots, m\}$ and $m \in \mathbb{N}$. Furthermore, the notation $\sum_{\skipindex{j}{l} \in \mathbb{N}^{k - 1}}$ for $l \in \{1, \dots, k\}$ translates to the sum over $\mathbb{N}^{k - 1}$ with the indices $\skipindex{j}{l} = (\vec{j}_1, \dots, \vec{j}_{l - 1}, \vec{j}_{l + 1}, \dots, \vec{j}_{k - 1})$. The multi-index $\replace{j}{l}{p_1, \dots, p_m}$ is defined as above in such case. We will use $\langle \cdot, \cdot \rangle$ to denote the duality mappings $\langle \cdot, \cdot\rangle_V$ and $\langle \cdot, \cdot\rangle_{V_1^0(k)}$, where the respective spaces are to be inferred from the context.

\begin{proof}[Proof of Lemma~\ref{lem:Ak}]
First, we show the operator's boundedness. Let $u, v \in V_1^0(k)$ be arbitrary but fixed. 
It holds that
\begin{align*}
    A_k v = \sum_{\vec{i} \in \mathbb{N}^k} \langle A_k v, \varphi_{\vec{i}} \rangle \varphi_{\vec{i}}.
\end{align*}
The functions $u$ and $v$ admit the representation $u = \sum_{\vec{i} \in \mathbb{N}^k} \hat{u}(\vec{i}) \varphi_{\vec{i}}$ with $\hat{u}(\vec{i}) = \langle u, \varphi_{\vec{i}}\rangle$, and a similar expression for $v$. It holds that
\begin{align*}
\langle A_k u, v \rangle &= \sum_{l = 1}^k \sum_{\vec{i} \in \mathbb{N}^k} \sum_{\vec{j} \in \mathbb{N}^k} \hat{u}(\vec{i}) \hat{v}(\vec{j}) \left\langle \left( \bigotimes_{m = 1}^{l - 1} \varphi_{\vec{i}_m}\right) \otimes A\varphi_{\vec{i}_l} \otimes \left( \bigotimes_{m = 1}^{k - l} \varphi_{\vec{i}_m}\right), \varphi_{\vec{j}} \right\rangle \\
&= \sum_{l = 1}^k \sum_{\skipindex{i}{l} \in \mathbb{N}^{k - 1}} \sum_{p, r \in \mathbb{N}} \hat{u}(\replace{i}{l}{p}) \hat{v}(\replace{i}{l}{r}) \langle A \varphi_p, \varphi_r \rangle.
\end{align*}
Define the functions $g_{\vec{i}, l} = \sum_{s \in \mathbb{N}} \hat{u}(\replace{i}{l}{s}) \varphi_s \in V$ and $w_{\vec{i}, l} = \sum_{s \in \mathbb{N}} \hat{v}(\replace{i}{l}{s}) \varphi_s \in V$. Then, by leveraging the Cauchy--Schwarz inequality, we obtain that
{\allowdisplaybreaks
\begin{align*}
    \langle A_k u, v \rangle &= \sum_{l = 1}^k \sum_{\skipindex{i}{l} \in \mathbb{N}^{k - 1}} \langle A g_{\vec{i}, l}, w_{\vec{i}, l} \rangle \underset{\eqref{assumption:ABounded}}{\le} \sum_{l = 1}^k \sum_{\skipindex{i}{l} \in \mathbb{N}^{k - 1}} \beta \| g_{\vec{i}, l} \|_{V} \| w_{\vec{i}, l} \|_{V} \\
    &\le \beta \left[ \sum_{l = 1}^k \sum_{\skipindex{i}{l} \in \mathbb{N}^{k - 1}} \| g_{\vec{i}, l} \|_{V}^2 \right]^{\frac{1}{2}} \left[ \sum_{l = 1}^k \sum_{\skipindex{i}{l} \in \mathbb{N}^{k - 1}} \| w_{\vec{i}, l} \|_{V}^2 \right]^{\frac{1}{2}} \\
    &= \beta \left[ \sum_{l = 1}^k \sum_{\skipindex{i}{l} \in \mathbb{N}^{k - 1}} \sum_{s \in \mathbb{N}} \lambda_s \hat{u}(\replace{i}{l}{s})^2 \right]^{\frac{1}{2}} \left[ \sum_{l = 1}^k \sum_{\skipindex{i}{l} \in \mathbb{N}^{k - 1}} \sum_{s \in \mathbb{N}} \lambda_s \hat{v}(\replace{i}{l}{s})^2 \right]^{\frac{1}{2}} \\
    &= \beta \left[ \sum_{l = 1}^k \sum_{\vec{i} \in \mathbb{N}^{k}} \lambda_{\vec{i}_l} \hat{u}(\vec{i})^2 \right]^{\frac{1}{2}} \left[ \sum_{l = 1}^k \sum_{\vec{i} \in \mathbb{N}^{k}} \lambda_{\vec{i}_l} \hat{v}(\vec{i})^2 \right]^{\frac{1}{2}} = \beta \| u \|_{V_1^0(k)} \| v \|_{V_1^0(k)},
\end{align*}
}%
which shows the boundedness of $A_k$.

Next, we show the coercivity of $A_k$ by
{\allowdisplaybreaks
\begin{align*}
\langle A_k v, v\rangle &= \sum_{l = 1}^k \sum_{\vec{i} \in \mathbb{N}^k} \sum_{\vec{j} \in \mathbb{N}^k} \hat{v}(\vec{i}) \hat{v}(\vec{j}) \left\langle \left( \bigotimes^{l - 1}_{m = 1} \varphi_{\vec{i}_m} \right) \otimes A \varphi_{\vec{i}_l} \otimes \left( \bigotimes^{k - l}_{m = 1} \varphi_{\vec{i}_m} \right), \varphi_{\vec{j}}\right\rangle \\
& = \sum_{l = 1}^k \sum_{\skipindex{i}{l} \in \mathbb{N}^{k - 1}} \sum_{p, r \in \mathbb{N}} \hat{v}(\replace{i}{l}{p}) \hat{v}(\replace{i}{l}{r}) \langle A\varphi_{p}, \varphi_{r}\rangle = \sum_{l = 1}^k \sum_{\skipindex{i}{l} \in \mathbb{N}^{k - 1}} \langle A w_{\vec{i}, l}, w_{\vec{i}, l}\rangle \\
&\underset{\eqref{assumption:AVHCoercive}}{\ge} \sum_{l = 1}^k \sum_{\skipindex{i}{l} \in \mathbb{N}^{k - 1}} \left(\gamma \|w_{\vec{i}, l}\|_V^2 - \lambda \| w_{\vec{i}, l} \|_H^2\right) \\
&= \gamma \sum_{l = 1}^k \sum_{\skipindex{i}{l} \in \mathbb{N}^{k - 1}} \sum_{s \in \mathbb{N}} \lambda_s \hat{v}(\replace{i}{l}{s})^2 - \lambda \sum_{l = 1}^k \sum_{\skipindex{i}{l} \in \mathbb{N}^{k - 1}} \sum_{s \in \mathbb{N}} \hat{v}(\replace{i}{l}{s})^2 \\
&= \gamma \sum_{l = 1}^k \sum_{\vec{i} \in \mathbb{N}^k} \lambda_{\vec{i}_l} \hat{v}(\vec{i})^2 - \lambda \sum_{l = 1}^k \sum_{\vec{i} \in \mathbb{N}^k} \hat{v}(\vec{i})^2 = \gamma \|v \|_{V_0^1(k)}^2 - k \lambda \|v\|_{H(k)}^2. \qedhere
\end{align*}
}%
\end{proof}

\begin{proof}[Proof of Lemma~\ref{lem:Bk}]
Let $B_m^{p, r} = \langle B(\varphi_p \otimes \varphi_r), \varphi_m\rangle$ for $p, r, m \in \mathbb{N}$. Any $v \in V_1^\alpha(k + 1)$ can be represented as $v = \sum_{\vec{i} \in \mathbb{N}^{k + 1}} \hat{v}(\vec{i}) \varphi_{\vec{i}}$. Then,
\begin{align*}
    B_k v &= \sum_{\vec{j} \in \mathbb{N}^k} \langle B_k v, \varphi_{\vec{j}} \rangle \varphi_{\vec{j}}.
\end{align*}
Moreover,
\begin{align*}
    \langle B_k v, \varphi_{\vec{j}} \rangle = \sum_{l = 1}^k \left\langle \left(\bigotimes^{l - 1} I\right) \otimes B \otimes \left(\bigotimes^{k - l} I\right) v, \varphi_{\vec{j}} \right\rangle = \sum_{l = 1}^k \sum_{p, r \in \mathbb{N}} B_{\vec{j}_l}^{p, r} \hat{v}(\replace{j}{l}{p, r}).
\end{align*}
Using the Cauchy--Schwarz and H\"older inequalities, we can conclude that
\begin{align*}
    &\| B_k v \|_{V_{-1 + \varepsilon}^\alpha(k)}^2 = \sum_{\vec{j} \in \mathbb{N}^k} \pi(\lambda_{\vec{j}})^\alpha \sigma(\lambda_{\vec{j}})^{-1 + \varepsilon} \left| \sum_{l = 1}^k \sum_{p, r \in \mathbb{N}} B_{\vec{j}_l}^{p, r} \hat{v}(\replace{j}{l}{p, r}) \right|^2 \\
    &\quad\le \sum_{\vec{j} \in \mathbb{N}^k} \pi(\lambda_{\vec{j}})^\alpha \underbrace{\sigma(\lambda_{\vec{j}})^{-1 + \varepsilon} \left( \sum_{l = 1}^k \lambda_{\vec{j}_l}^{1 - \varepsilon}  \right)}_{ \le k^\varepsilon } \sum_{l = 1}^k \frac{1}{\lambda_{\vec{j}_l}^{1 - \varepsilon}} \left| \sum_{p, r \in \mathbb{N}} B_{\vec{j}_l}^{p, r} \hat{v}(\replace{j}{l}{p, r}) \right|^2 \\
    &\quad\le k^\varepsilon \sum_{\vec{j} \in \mathbb{N}^k} \pi(\lambda_{\vec{j}})^\alpha \sum_{l = 1}^k \frac{1}{\lambda_{\vec{j}_l}^{1 - \varepsilon}} \left| \sum_{p, r \in \mathbb{N}} B_{\vec{j}_l}^{p, r} \hat{v}(\replace{j}{l}{p, r}) \right|^2.
\end{align*}
H\"older's inequality was used with $p = 1/(1 - \varepsilon)$ and $q = 1 / \varepsilon$ for $\varepsilon \in [0, 1)$. Then, $1/p + 1/q = 1$ and
\begin{align*}
    \sum_{l = 1}^k \lambda_{\vec{j}_l}^{1 - \varepsilon} \le \left( \sum_{l = 1}^k \lambda_{\vec{j}_l}^{(1 - \varepsilon) p} \right)^{\frac{1}{p}} \left(\sum_{l = 1}^k 1 \right)^\frac{1}{q} = \left(\sum_{l = 1}^k \lambda_{\vec{j}_l} \right)^{1 - \varepsilon} k^\varepsilon.
\end{align*}
The same estimate can be shown directly for $\varepsilon = 1$.
For $\vec{j} \in \mathbb{N}^k$ fixed, let $w_{\vec{j}, l} := \allowbreak \sum_{p, r \in \mathbb{N}}  \allowbreak \hat{v}(\replace{j}{l}{p, r}) \varphi_p \otimes \varphi_r \in V^\alpha_1(2)$. From the regularity of the operator $B$, we have that
\begin{align*}
    &\sum_{\vec{j}_l \in \mathbb{N}} \lambda_{\vec{j}_l}^{\alpha - 1 + \varepsilon} \left| \sum_{p, r \in \mathbb{N}} B_{\vec{j}_l}^{p, r} \hat{v}(\replace{j}{l}{p, r}) \right|^2 = \| B w_{\vec{j}, l} \|_{V^{\alpha - 1 + \varepsilon}}^2 \\
    &\quad\underset{\eqref{assumption:BBilinearMapping}}{\le} c_B(\alpha, \varepsilon)\|w_{\vec{j}, l}\|_{V_1^\alpha(2)}^2
    = c_B(\alpha, \varepsilon) \sum_{p, r \in \mathbb{N}} \hat{v}(\replace{j}{l}{p, r})^2 \lambda_p^\alpha \lambda_r^\alpha (\lambda_p + \lambda_r).
    % &= c \sum_{\vec{j} \in \mathbb{N}^{k + 1}} \left| \hat{v}(\vec{j}) \right|^2 \lambda_{j_l}^\alpha \lambda_{j_{l + 1}}^\alpha (\lambda_{j_l} + \lambda_{j_{l + 1}}).
\end{align*}
Plugging this into the above estimate, we obtain
{\allowdisplaybreaks
\begin{align*}
    &\sum_{\vec{j} \in \mathbb{N}^k} \pi(\lambda_{\vec{j}})^\alpha \sum_{l = 1}^k \frac{1}{\lambda_{\vec{j}_l}^{1 - \varepsilon}} \left| \sum_{p, r \in \mathbb{N}} B_{\vec{j}_l}^{p, r} \hat{v}(\replace{j}{l}{p, r}) \right|^2 \\
    &\quad= \sum_{l = 1}^k \sum_{\skipindex{j}{l} \in \mathbb{N}^{k - 1}} \sum_{\vec{j}_l \in \mathbb{N}} \pi(\lambda_{\vec{j}})^\alpha \frac{1}{\lambda_{\vec{j}_l}^\alpha} \lambda_{\vec{j}_l}^{\alpha - 1 + \varepsilon} \left| \sum_{p, r \in \mathbb{N}} B_{\vec{j}_l}^{p, r} \hat{v}(\replace{j}{l}{p, r}) \right|^2 \\
    &\quad= \sum_{l = 1}^k \sum_{\skipindex{j}{l} \in \mathbb{N}^{k - 1}} \left( \prod_{m \neq l} \lambda_{\vec{j}_m}^\alpha \right) \sum_{\vec{j}_l \in \mathbb{N}} \lambda_{\vec{j}_l}^{\alpha - 1 + \varepsilon} \left| \sum_{p, r \in \mathbb{N}} B_{\vec{j}_l}^{p, r} \hat{v}(\replace{j}{l}{p, r}) \right|^2 \\
    &\quad\le c_B(\alpha, \varepsilon) \sum_{l = 1}^k \sum_{\skipindex{j}{l} \in \mathbb{N}^{k - 1}} \left( \prod_{m \neq l} \lambda_{\vec{j}_m}^\alpha \right) \sum_{p, r \in \mathbb{N}} \hat{v}(\replace{j}{l}{p, r})^2 \lambda_p^\alpha \lambda_r^\alpha (\lambda_p + \lambda_r) \\
    &\quad= c_B(\alpha, \varepsilon) \sum_{l = 1}^k \sum_{\vec{i} \in \mathbb{N}^{k + 1}} \pi(\lambda_{\vec{i}})^\alpha \hat{v}(\vec{i})^2 (\lambda_{\vec{i}_l} + \lambda_{\vec{i}_{l + 1}})
    = c_B(\alpha, \varepsilon) \sum_{\vec{i} \in \mathbb{N}^{k + 1}} \pi(\lambda_{\vec{i}})^\alpha \hat{v}(\vec{i})^2 \underbrace{\sum_{l = 1}^k (\lambda_{\vec{i}_l} + \lambda_{\vec{i}_{l + 1}})}_{\le 2\sigma(\lambda_{\vec{i}})} \\
    &\quad\le 2 c_B(\alpha, \varepsilon) \sum_{\vec{i} \in \mathbb{N}^{k + 1}} \pi(\lambda_{\vec{i}})^\alpha \sigma(\lambda_{\vec{i}}) \hat{v}(\vec{i})^2 = 2 c_B(\alpha, \varepsilon) \| v \|_{V_1^\alpha(k + 1)}^2.
\end{align*}
Thus, 
\begin{equation*}
    \| B_k v \|_{V_{-1 + \varepsilon}^\alpha(k)}^2 \le 2 c_B(\alpha, \varepsilon) k^\varepsilon \| v \|_{V_1^\alpha(k + 1)}^2. \qedhere
\end{equation*}
}%

\end{proof}

\begin{proof}[Proof of Lemma~\ref{lem:Fk}]
For all $v \in V_1^0(k - 1)$ it holds that
\begin{align*}
        F_k(t) v = \sum_{\vec{j} \in \mathbb{N}^{k}} \langle F_k(t) v, \varphi_{\vec{j}}\rangle \varphi_{\vec{j}} = \sum_{\vec{j} \in \mathbb{N}^{k}} \varphi_{\vec{j}} \sum_{l = 1}^k \langle -f, \varphi_{\vec{j}_l}\rangle \hat{v}(\skipindex{j}{l}).
\end{align*}
With $\lambda_{\text{min}}$ being the smallest eigenvalue of $L$, we have that
{\allowdisplaybreaks
\begin{align*}
    \| F_k(t) v \|^2_{V_{-1 + \varepsilon}^\alpha(k)} &= \sum_{\vec{j} \in \mathbb{N}^k} \pi(\lambda_{\vec{j}})^\alpha \sigma(\lambda_{\vec{j}})^{-1 + \varepsilon} \left| \sum_{l = 1}^k \langle -f, \varphi_{\vec{j}_l}\rangle \hat{v}(\skipindex{j}{l}) \right|^2 \\
    &\le \sum_{\vec{j} \in \mathbb{N}^k} \pi(\lambda_{\vec{j}})^\alpha \sigma(\lambda_{\vec{j}})^{-1 + \varepsilon} \left[\sum_{l = 1}^k
\lambda_{\vec{j}_l}^\alpha \langle f, \varphi_{\vec{j}_l}\rangle^2 \right] \left[\sum_{l = 1}^k \lambda_{\vec{j}_l}^{-\alpha} \hat{v}(\skipindex{j}{l})^2 \right] \\
    &= \sum_{\vec{j} \in \mathbb{N}^k} \underbrace{\sigma(\lambda_{\vec{j}})^{-1 + \varepsilon}}_{\le \lambda_{\text{min}}^{-1 + \varepsilon} k^{-1 + \varepsilon}} \left[\sum_{l = 1}^k \lambda_{\vec{j}_l}^\alpha \langle f, \varphi_{\vec{j}_l}\rangle^2 \right] \left[\sum_{l = 1}^k \pi(\lambda_{\vec{j}})^\alpha \lambda_{\vec{j}_l}^{-\alpha} \hat{v}(\skipindex{j}{l})^2 \right] \\
    &\le \lambda_{\text{min}}^{-1 + \varepsilon} k^{-1 + \varepsilon}\left[ \sum_{j \in \mathbb{N}} \sum_{l = 1}^k \lambda_{j}^\alpha \langle f, \varphi_{j}\rangle^2 \right] \left[\sum_{\vec{i} \in \mathbb{N}^{k - 1}} \sum_{l = 1}^k\pi(\lambda_{\vec{i}})^\alpha \hat{v}(\vec{i})^2 \right] \\
    &= \lambda_{\text{min}}^{-1 + \varepsilon} k^{1 + \varepsilon}\left[ \sum_{j \in \mathbb{N}} \lambda_{j}^\alpha \langle f, \varphi_{j}\rangle^2 \right] \left[\sum_{\vec{i} \in \mathbb{N}^{k - 1}} \pi(\lambda_{\vec{i}})^\alpha \hat{v}(\vec{i})^2 \right] \\
    &\le \lambda_{\text{min}}^{-1 + \varepsilon} k^{1 + \varepsilon} \| f(t) \|_{V^\alpha}^2 \sum_{\vec{i} \in \mathbb{N}^{k - 1}} \underbrace{\sigma(\lambda_{\vec{i}})^{-1}}_{\le \lambda_{\text{min}}^{-1} \frac{1}{k - 1}} \sigma(\lambda_{\vec{i}}) \pi(\lambda_{\vec{i}})^\alpha \hat{v}(\vec{i})^2 \\
    &\le \lambda_{\text{min}}^{-2 + \varepsilon} \frac{k^{1 + \varepsilon}}{k - 1} \|f(t) \|_{V^\alpha}^2 \|v \|_{V_1^\alpha(k)}^2 \\
    &\le 2 \lambda_{\text{min}}^{-2 + \varepsilon} k^\varepsilon \|f(t) \|_{V^\alpha}^2 \|v \|_{V_1^\alpha(k)}^2,
\end{align*}
}%
since $k \ge 2$. This concludes the proof.
\end{proof}

\end{document}